\newtheorem{theorem}{Theorem}[section]
\newtheorem{lemma}[theorem]{Lemma}
\newtheorem{prop}[theorem]{Proposition}
\newtheorem*{Theorem1'}{Theorem 1'}
\theoremstyle{definition}
\theoremstyle{remark}
\newcommand \Z{{\mathbb Z}}
\newcommand \N{{\mathbb N}}
\begin{document}

\title[On the isomorphism problem for certain $p$-groups]{On the isomorphism problem for certain $p$-groups}

\author{Alexander Montoya Ocampo}
\address{Department of Mathematics and Statistics, University of Regina, Canada}
\email{alexandermontoya1996@gmail.com}

\author{Fernando Szechtman}
\address{Department of Mathematics and Statistics, University of Regina, Canada}
\email{fernando.szechtman@gmail.com}
\thanks{The second author was partially supported by NSERC grant RGPIN-2020-04062}

\subjclass[2020]{20D15, 20D20}

\keywords{$p$-group, Sylow subgroup, Macdonald group}

\begin{abstract} We consider 9 infinite families of finite $p$-groups, for $p$ a prime, and we settle
the isomorphism problem that arises when the parameters that define these groups are modified.
\end{abstract}

\maketitle

\section{Introduction}

We fix $p,m\in\N$ and $\ell\in\Z$, with $p$ is prime and $p\nmid\ell$, set $\alpha=1+p^m\ell$, and consider the
following~9 infinite families of groups. In Case 1, which occurs when $p$ is odd and if $(p,m)=(3,1)$ then $\ell\equiv 1\mod 3$,
we set
\begin{equation}\label{j1}
J_1(\alpha)=\langle x,y\,|\, x^{[x,y]}=x^{1+p^m\ell},\, y^{[y,x]}=y^{1+p^m\ell}, x^{p^{3m}}=y^{p^{3m}}=1\rangle;
\end{equation}
in Case 2, which takes place when $p=2$, we let
\begin{equation}\label{j2}
J_2(\alpha)=\langle x,y\,|\, x^{[x,y]}=x^{1+2^m\ell},\, y^{[y,x]}=y^{1+2^m\ell}, x^{2^{3m-1}}=y^{2^{3m-1}}=1\rangle;
\end{equation}
and in Case 3, which happens when $(p,m)=(3,1)$ and $\ell\equiv -1\mod 3$ we define
\begin{equation}\label{j3}
J_3(\alpha)=\langle x,y\,|\, x^{[x,y]}=x^{1+3\ell},\, y^{[y,x]}=y^{1+3\ell}, x^{81}=y^{81}=1\rangle.
\end{equation}
Moreover, we let $H_i(\alpha)=J_i(\alpha)/Z(J_i(\alpha))$ and $K_i(\alpha)=J_i(\alpha)/Z_2(J_i(\alpha))$
stand for the quotients of $J_i(\alpha)$ by its first and second center, respectively, for $1\leq i\leq 3$.

Given an integer $\beta$, we recall from \cite{M} the 1-parameter Macdonald group:
$$
G(\beta)=\langle x,y\,|\, x^{[x,y]}=x^{\beta},\, y^{[y,x]}=y^{\beta}\rangle.
$$
It is readily seen that $G(1)$ is the integral Heisenberg group. If $\beta\neq 1$ then
$G(\beta)$ is finite \cite{M}, the prime factors of its order being those of $\beta-1$.
Moreover, $G(\beta)$ is also nilpotent. The proof given 
in \cite{M} has a gap that is filled in \cite{MS}.

According to \cite[Theorem 5.3]{MS}, in Case 1, $J_1(\alpha)$ is the Sylow $p$-subgroup of $G(\alpha)$; in Case 2,
$J_2(\alpha)$ is the Sylow $2$-subgroup of $G(\alpha)$; and in Case 3,
$J_3(\alpha)$ is the Sylow $3$-subgroup of $G(\alpha)$. Moreover, \cite[Theorem 7.1]{MS} proves that
$J_1(\alpha)$ has order~$p^{7m}$ and nilpotency class 5;
$J_2(\alpha)$ has order~$2^{7m-3}$, and nilpotency class 5 if $m>1$ and 3 if $m=1$; and
$J_3(\alpha)$ has order $3^{10}$ and nilpotency class 7.

Let $\ell'$ be an integer henceforth assumed to satisfy the same conditions imposed on $\ell$, and
set $\alpha'=1+p^m\ell'$. As hinted above and seen more thoroughly in \cite{MS} and below, the groups
\begin{equation}\label{9g}
J_1(\alpha), H_1(\alpha), K_1(\alpha),J_2(\alpha), H_2(\alpha),K_2(\alpha),J_3(\alpha),H_3(\alpha),K_3(\alpha)
\end{equation}
possess several structural properties that are completely independent of $\ell$, and the question arises
as to whether the groups \eqref{9g} are actually isomorphic to the corresponding counterparts
when $\alpha$ is replaced by $\alpha'$ and, if not, when does such isomorphism exists.

In this paper, we find necessary and sufficient conditions for the existence
of an isomorphism arising from the substitution $\alpha\leftrightarrow\alpha'$ in all 9 cases appearing in (\ref{9g}).

There are infinitely many substitutions $\alpha\leftrightarrow\alpha'$ to consider in each case listed in (\ref{9g}),
although a quick observation leaves only finitely
many to be analyzed. Indeed, it is easy to see that:
if $\alpha^\prime\equiv\alpha\mod p^{3m}$ then $J_1(\alpha^\prime)\cong J_1(\alpha)$; if
$\alpha^\prime\equiv\alpha\mod 2^{3m-1}$ then $J_2(\alpha^\prime)\cong J_2(\alpha)$; if
$\alpha^\prime\equiv\alpha\mod 81$ then $J_3(\alpha^\prime)\cong J_3(\alpha)$. The same observations
are valid for $H_i(\alpha)$ and $K_i(\alpha)$, the moduli being $p^{2m}, 2^{2m-1}$, and 27 in this case.
However, these evident sufficient conditions are too strong,
as the given isomorphisms actually hold under weaker conditions, as indicated below.



\medskip

\noindent{\bf Theorem A. }{\it In Case 1, $J_1(\alpha)\cong J_1(\alpha')$
if and only if $\alpha\equiv \alpha'\mod p^{2m}$. In particular, for fixed $p$ and $m$, there are exactly $\varphi(p^m)$ isomorphism classes of groups $J_1(\alpha)$, except when $(p,m)=(3,1)$, in which case $J_1(1+3\ell)\cong J_1(4)$.
On the other hand, all proper terms and all factors of the upper central series of $J_1(\alpha)$ and $J_1(\alpha')$
are isomorphic. The exact same result is valid
for $H_1(\alpha)$, whereas $K_1(1+p^m\ell)\cong K_1(1+p^m)$.
}

\medskip

\noindent{\bf Theorem B. }{\it In Case 2, $J_2(1+2\ell)\cong J_2(3)$,  $J_2(1+4\ell)\cong J_2(5)$,
and for $m>2$, $J_2(\alpha)\cong J_2(\alpha')$ if
and only if $\alpha\equiv\alpha'\mod 2^{2m}$. In particular, for fixed~$m>2$, there are exactly $\varphi(2^{m})$ isomorphism classes
of groups $J_2(\alpha)$. On the other hand, the derived subgroups as well as
all factors of the upper central series of $J_2(\alpha)$ and $J_2(\alpha')$ are isomorphic. In addition,
$K_2(1+2^m\ell)\cong K_2(1+2^m)$.
}

\medskip

\noindent{\bf Theorem C. }{\it In Case 3, $J_3(1+3\ell)\cong J(7)$, so $H_3(1+3\ell)\cong H_3(7)$ and $K_3(1+3\ell)\cong K_3(7)$.
}

\medskip

\noindent{\bf Theorem D. }{\it Let $\beta$ be any integer. Then the Macdonald groups $G(\alpha)$ and $G(\beta)$
are isomorphic if and only if $\beta=\alpha$, or $\beta=2-\alpha$
and $\alpha\in\{-3,-1,0,1,2,3,5\}$. On the other hand, if $v_3(\alpha-1)\neq 1$, then $G(\alpha)$ and $G(2-\alpha)$
have the same order, and the derived subgroups as well as all factors of the upper central series of $G(\alpha)$ and $G(2-\alpha)$
are isomorphic.
}

\medskip

\noindent{\bf Theorem E. }{\it In Case 2, $H_2(1+2\ell)\cong H_2(3)$, $H_2(1+4\ell)\cong H_2(5)$, $H_2(1+8\ell)\cong H_2(9)$, and for $m>2$,
$H_2(\alpha')\cong H_2(\alpha)$ if and only if $\alpha'\equiv\alpha\mod 2^{2m-2}$. In particular, for fixed~$m>2$, there are
exactly $\varphi(2^{m-2})$ isomorphism classes of groups $H_2(\alpha)$.
}

We found it difficult to distinguish nonisomorphic groups that appear similar in so many respects and we were unable
to detect a structural property to tell them apart. This phenomenon is not unique, see \cite{LW}, for instance.
In our case, we used an isomorphism searching technique, as described in Section \ref{si}, together with our knowledge
of the automorphism group of the groups involved, as given in \cite{MS2} and \cite{MS3}. For an algorithm approach
to the general isomorphism problem for finite $p$-groups, see \cite{O}. As we are dealing with all prime numbers $p$
and all natural numbers $m$ at once, this approach is not applicable to us.

In terms of notation, given a group $T$ and $i\geq 0$, we let $\langle 1\rangle=Z_0(T),Z_1(T),Z_2(T),\dots$ stand for the terms of the upper central series of~$T$, so that $Z_{i+1}(T)/Z_i(T)$ is the center of $T/Z_i(T)$, and we write $\mathrm{Aut}_i(T)$ for the kernel of the canonical map $\mathrm{Aut}(T)\to \mathrm{Aut}(T/Z_i(T))$. We write $[a,b]=a^{-1}b^{-1}ab$ and $b^a=a^{-1}ba$ for $a,b\in T$.
Function composition proceeds from left to right. If $S$ is a normal subgroup of $T$,
we sometimes write $\overline{T}$ for $T/S$ and $\overline{t}$ for $tS\in \overline{T}$.

\section{Sufficiency}

\begin{theorem}\label{suf} (a) In Case 1, assume
$\ell'\equiv \ell\mod p^{m}$, i.e., $\alpha'\equiv\alpha\mod p^{2m}$. We then have
$J_1(\alpha')\cong J_1(\alpha)$.

(b) In Case 2, suppose that $\ell'\equiv \ell\mod 2^{m}$, that is, $\alpha'\equiv\alpha\mod 2^{2m}$. Then
$J_2(\alpha')\cong J_2(\alpha)$.

(c) In Case 3, $J_3(\alpha')\cong J_3(\alpha)$.

\end{theorem}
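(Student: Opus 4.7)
My plan is to apply von Dyck's theorem. In each of the three parts I seek to exhibit elements $\tilde{x},\tilde{y}\in J_i(\alpha)$ satisfying the defining relations of $J_i(\alpha')$; this yields a homomorphism $J_i(\alpha')\to J_i(\alpha)$, which is surjective once one checks that $\tilde{x},\tilde{y}$ generate the target, and hence an isomorphism because the two groups have the same order by \cite[Theorem 7.1]{MS}. Concretely, the relations to verify are
\begin{equation*}
\tilde{x}^{[\tilde{x},\tilde{y}]}=\tilde{x}^{\alpha'},\qquad \tilde{y}^{[\tilde{y},\tilde{x}]}=\tilde{y}^{\alpha'},\qquad \tilde{x}^{N}=\tilde{y}^{N}=1,
\end{equation*}
with $N\in\{p^{3m},2^{3m-1},81\}$ in the respective cases.

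For parts (a) and (b) the hypothesis $\alpha'-\alpha=p^{2m}t$ (respectively $2^{2m}t$) points to a small perturbation: I would try $\tilde{x}=x\cdot u$ and $\tilde{y}=y\cdot v$ with $u,v$ in a sufficiently deep term of the upper central series of $J_i(\alpha)$, drawing on the structural data for $Z_k(J_i(\alpha))$ provided in \cite{MS}. Such $u,v$ keep $[\tilde{x},\tilde{y}]$ equal to $[x,y]$ up to a very deep commutator correction, and the residual term should precisely match the adjustment $x^{p^{2m}t}$ needed to upgrade the exponent $\alpha$ to $\alpha'$ in the first twisted relation. The symmetric second relation for $\tilde{y}$ should then follow from the same computation by invoking the $x\leftrightarrow y$ involution of the presentation, and the order relations are automatic provided $u,v$ sit in the kernel of the relevant power map. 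The correct choice of $u,v$ is ultimately determined by a linear congruence inside an abelian section $Z_k(J_i(\alpha))/Z_{k-1}(J_i(\alpha))$.

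For part (c), the pair $(p,m)=(3,1)$ is fixed and \emph{no} congruence between $\alpha$ and $\alpha'$ is assumed, so a genuine global substitution is required. Because $J_3(\alpha)$ has nilpotency class $7$ and order $3^{10}$, the central series is rich enough to absorb the entire parameter change; I would seek $\tilde{x},\tilde{y}$ of the form $x^{a}\cdot w_x$ and $y^{b}\cdot w_y$, where $a,b$ are units modulo $3$ tuned so that the outermost contribution already matches $\alpha'$, and where $w_x,w_y$ are deeper correction terms that soak up the higher-order discrepancies.

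The main obstacle in every part is the explicit computation of $[\tilde{x},[\tilde{x},\tilde{y}]]$ and its symmetric companion. This calls for a systematic expansion via $[ab,c]=[a,c]^b[b,c]$ and a conjugation analysis, tracking contributions in each term of the lower central series and verifying that the binomial-style $p$-adic valuations that arise line up with the assumed congruence on $\alpha$ and $\alpha'$. I expect part (c) to be the most delicate: since the substitution cannot be a mere perturbation of the generators, the correct ansatz will likely reveal itself only after the structural description of $J_3(\alpha)$ from \cite{MS} is fully exploited.
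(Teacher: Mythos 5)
Your overall framework (von Dyck's theorem, exhibiting elements of $J_i(\alpha)$ satisfying the relations of $J_i(\alpha')$, surjectivity plus the order count from \cite[Theorem 7.1]{MS}) is exactly the paper's, and your ansatz is pointed in the right direction: the paper's substitution is $X\mapsto A$, $Y\mapsto B^f$, and since $B^{f}=B\cdot B^{kp^m}$ with $B^{kp^m}\in Z_3(J_1(\alpha))$, this is a special case of your ``perturb $y$ by a deep central element.'' However, the proposal stops at the level of a plan, and the two ideas that actually make the argument close are absent. First, the arithmetic driver: one must observe that $\alpha'$ is a \emph{power} of $\alpha$ modulo the relevant modulus ($p^{3m}$, $2^{3m-1}$, $81$), with exponent $f\equiv 1\bmod p^m$ (resp.\ $2^m$, $3$); this is what lets the single substitution $Y\mapsto B^f$ twist the conjugation exponent from $\alpha$ to $\alpha^f\equiv\alpha'$. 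In Case 3 this is also precisely why no congruence hypothesis is needed: because $\ell\equiv-1\bmod 3$, the class of $\alpha$ generates the full group of units congruent to $1$ modulo $3$ in $(\Z/81\Z)^\times$, so \emph{every} admissible $\alpha'$ is of the form $\alpha^f$. Your part (c) ansatz $x^a w_x,\,y^b w_y$ gestures at this but does not identify the fact that makes it work. Second, the verification of the twisted relations is not a routine commutator expansion one can defer: it hinges on specific structural inputs from \cite{MS} (that $\langle A\rangle$ and $\langle B\rangle$ are self-centralizing and that $\langle A\rangle\cap\langle B\rangle=\langle B^{p^{2m}}\rangle$, etc.), which reduce the relation $A^{[A,B^f]}=A^{\alpha'}$ to the congruence $\alpha+2\alpha^2+\cdots+(f-1)\alpha^{f-1}\equiv 0$ modulo $p^m$ (resp.\ $2^{m-1}$, $9$); this is your ``linear congruence in an abelian section,'' but its solvability is a nontrivial check (especially in Case 3, where a second-order term $1^2+2^2+\cdots+(f-1)^2$ enters) and you have not performed it.

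One concrete error: you assert that the second twisted relation ``follows from the same computation by invoking the $x\leftrightarrow y$ involution.'' The substitution that works is asymmetric ($X\mapsto A$, $Y\mapsto B^f$), so the involution of the presentation does not carry one relation to the other. In fact the two relations behave quite differently: $(B^f)^{[B^f,A]}=(B^f)^{\alpha'}$ reduces to a condition that holds automatically because $[A,B]$ normalizes $\langle B\rangle$, whereas $A^{[A,B^f]}=A^{\alpha'}$ is the relation that forces the congruence above. Any write-up along your lines must treat them separately.
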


\begin{proof} (a) We have $\ell'=\ell+rp^m$ for some $r\in\Z$, so that $\alpha'=\alpha+rp^{2m}$.
Let $f=1+kp^m$, with $k\in\N$. Then $\alpha^{kp^m} \equiv 1+k\ell p^{2m}\mod p^{3m}$,
so
$$
\alpha^f \equiv (1+k\ell p^{2m})(1+\ell p^{2m})\equiv \alpha+ k\ell p^{2m}\mod p^{3m}.
$$
Since $p\nmid \ell$, we can select $k$ so that $k\ell\equiv r\mod p^m$, whence $\alpha'\equiv\alpha^f\mod p^{3m}$.

By definition, we have $J_1(\alpha')=\langle X,Y\,|\, X^{[X,Y]}=X^{\alpha'},\, Y^{[Y,X]}=Y^{\alpha'}, X ^{p^{3m}}=Y^{p^{3m}}=1\rangle$,
while $J_1(\alpha)=\langle A,B\,|\, A^{[A,B]}=A^{\alpha},\, B^{[B,A]}=B^{\alpha}, A^{p^{3m}}=B^{p^{3m}}=1\rangle.$
We claim that the assignment
\begin{equation}
\label{iso}
X\mapsto A,\; Y\mapsto B^f,
\end{equation}
extends to an isomorphism $J_1(\alpha^\prime)\to J_1(\alpha)$.
Setting $C=[A,B]$, we must verify that
$$
A^{[A,B^f]}=A^{\alpha'}=A^{\alpha^f}=A^{C^f},\; B^{[B^f,A]}=B^{\alpha'}=B^{\alpha^f}=B^{C^{-f}}.
$$
By \cite[Proposition 7.3]{MS}, $\langle A\rangle$ (resp. $\langle B\rangle$) is equal to its centralizer in $J_1(\alpha)$,
so the above conditions become
$$
[A,B^f]C^{-f}\in \langle A\rangle,\; [B^f,A]C^{f}\in \langle B\rangle.
$$

From $A^C=A^\alpha$ and $B^{C^{-1}}=B^\alpha$ we readily see that
$$
[A,B^f]=B^{(\alpha-1)(\alpha+2\alpha^2+\cdots+(f-1)\alpha^{f-1})} C^f.
$$
Thus $[B^f,A]C^{f}\in \langle B\rangle$ is equivalent to $C^{-f}B^{-(\alpha-1)(\alpha+2\alpha^2+\cdots+(f-1)\alpha^{f-1})} C^{f}\in
\langle B\rangle$, which is automatically true since $C$ normalizes $\langle B\rangle$, while
$[A,B^f]C^{-f}\in \langle A\rangle$ is equivalent to
\begin{equation}
\label{cond}
B^{(\alpha-1)(\alpha+2\alpha^2+\cdots+(f-1)\alpha^{f-1})}\in
\langle A\rangle.
\end{equation}
In this regard,  by \cite[Section 6]{MS}, we have the fundamental relation $A^{p^{2m}}B^{p^{2m}}=1$. In particular,
$B^{p^{2m}}\in \langle A\rangle$. In fact, \cite[Proposition 7.2]{MS} ensures that
$\langle A\rangle\cap \langle B\rangle= \langle B^{p^{2m}} \rangle$.
Thus, (\ref{cond}) is actually equivalent to
\begin{equation}
\label{suma}
\alpha+2\alpha^2+\cdots+(f-1)\alpha^{f-1}\equiv 0\mod p^m.
\end{equation}
Here $\alpha\equiv 1\mod p^m$, so (\ref{suma}) translates into $(f-1)f/2\equiv 1+2+\cdots+(f-1)\equiv 0\mod p^m$,
which is certainly true since $f\equiv 1\mod p^m$.


(2) By definition, we have $J_2(\alpha')=\langle X,Y\,|\, X^{[X,Y]}=X^{\alpha'},\, Y^{[Y,X]}=Y^{\alpha'}, X^{2^{3m-1}}=Y^{2^{3m-1}}=1\rangle$,
while $J_2(\alpha)=\langle A,B\,|\, A^{[A,B]}=A^{\alpha},\, B^{[B,A]}=B^{\alpha}, A^{2^{2m-1}}=B^{p^{2m-1}}=1\rangle.$

We have $\ell'=\ell+r2^m$ for some $r\in\Z$, so that
$\alpha'=\alpha+r2^{2m}$. Let $f=1+k2^m$, with $k\in\N$. Then $\alpha^{k2^m} \equiv 1+k\ell 2^{2m}\mod 2^{3m-1}$, so
$\alpha^f \equiv \alpha+ k\ell 2^{2m}\mod 2^{3m-1}$. As $\ell$ is odd, we can find $k$ such that
$k\ell\equiv r\mod 2^{m-1}$, whence $\alpha'\equiv\alpha^f\mod 2^{3m-1}$.

We claim that the assignment (\ref{iso}) extends to an isomorphism $J_2(\alpha^\prime)\to J_2(\alpha)$.
By the argument from Case 1, we are reduced to verify (\ref{cond}). In this case, by \cite[Section 6]{MS},
we have the fundamental relation $A^{2^{2m-1}}B^{2^{2m-1}}=1$. In particular, $B^{2^{2m-1}}\in\langle A\rangle$.
In fact, \cite[Proposition 7.2]{MS} ensures that
$\langle A\rangle\cap \langle B\rangle= \langle B^{2^{2m-1}} \rangle$. Thus, (\ref{cond})
is actually equivalent to
\begin{equation}
\label{suma2}
\alpha+2\alpha^2+\cdots+(f-1)\alpha^{f-1}\equiv 0\mod 2^{m-1}.
\end{equation}
Here $\alpha\equiv 1\mod 2^m$, so (\ref{suma2}) translates into $(f-1)f/2\equiv 1+2+\cdots+(f-1)\equiv 0\mod 2^{m-1}$,
which is certainly true since $f\equiv 1\mod 2^m$.


(3) By definition, we have $J_3(\alpha')=\langle X,Y\,|\, X^{[X,Y]}=X^{\alpha'},\, Y^{[Y,X]}=Y^{\alpha'}, X^{81}=Y^{81}=1\rangle$,
while $J_3(\alpha)=\langle A,B\,|\, A^{[A,B]}=A^{\alpha},\, B^{[B,A]}=B^{\alpha}, A^{81}=B^{81}=1\rangle.$

We have $\alpha'=1+3\ell'$ and $\alpha=1+3\ell$. Here $\ell'=\ell+3r$ for some $r\in\Z$, so
$\alpha'=\alpha+9r$. Let $f=1+3k$, with $k\in\N$. As $\ell\equiv -1\mod 3$, we infer
$$
\alpha^3\equiv 1+9\ell\mod 81,
$$
which implies $\alpha^{3k}\equiv 1+9k\ell\mod 81$, and therefore
$$
\alpha^{f}\equiv\alpha+9k\ell+27k\ell^2\mod 81.
$$
As $\ell+3\ell^2$ is relatively prime to 3, we can find $k$ so that $k(\ell+3\ell^2)\equiv r\mod 9$,
in which case $\alpha^f\equiv\alpha'\mod 81$.
By the argument from Case 1, we are reduced to verify (\ref{cond}). In this case, by \cite[Section 6]{MS}, we have the fundamental relation
$A^{27}B^{27}=1$.
In particular, $B^{27}\in \langle A\rangle$. In fact, \cite[Proposition 7.2]{MS} ensures that
$\langle A\rangle\cap \langle B\rangle= \langle B^{27} \rangle$. Thus, (\ref{cond})
is actually equivalent to
\begin{equation}
\label{suma3}
\alpha+2\alpha^2+\cdots+(f-1)\alpha^{f-1}\equiv 0\mod 9.
\end{equation}
Here $\alpha\equiv 1\mod 3$, so $\alpha^i\equiv (1+(\alpha-1))^i\equiv 1+i(\alpha-1)\mod 9$, $i\geq 0$,
so (\ref{suma3}) translates into
\begin{equation}
\label{suma4}
(f-1)f/2+(\alpha-1)(1^2+2^2+\cdots+(f-1)^2)\equiv 0\mod 9.
\end{equation}
As is well-known $1^2+2^2+\cdots+n^2=n(n+1)(2n+1)/6$, $n\geq 1$,
so (\ref{suma4}) becomes
\begin{equation}
\label{suma5}
kf[1+(2(f-1)+1)\ell]\equiv 0\mod 3.
\end{equation}
But $\ell\equiv -1\mod 3$, so $1+(2(f-1)+1)\ell\equiv -2(f-1)\equiv 0\mod 3$, as required.
\end{proof}


\section{Searching for isomorphisms}\label{si}

Let $G_1$ and $G_2$ be groups with characteristic subgroups $N_1$ and $N_2$, respectively,
and write $L_1=G_1/N_1$ and $L_2=G_2/N_2$, with canonical projections $\pi_1:G_1\to L_1$
and $\pi_2:G_2\to L_2$.

As $N$ is characteristic subgroup of $G$, we have a homomorphism $\Lambda:\mathrm{Aut}(G_1)\to \mathrm{Aut}(L_1)$
such that for all $\beta\in \mathrm{Aut}(G_1)$, $\beta^{\Lambda}$ is the only automorphism of $L_1$ satisfying
$\pi_1 {\beta}^{\Lambda}=\beta\pi_1$. 

Likewise, to every isomorphism $\Omega:G_1\to G_2$ such that $N_1^\Omega=N_2$ there corresponds a unique
isomorphism $\Omega^*:L_1\to L_2$ satisfying $\pi_1\Omega^*=\Omega\pi_2$. Note that
\begin{equation}\label{match}
(\beta\Omega)^*=\beta^{\Lambda} \Omega^*,\quad \beta\in \mathrm{Aut}(G_1),
\end{equation}
as $\beta^{\Lambda} \Omega^*:L_1\to L_2$  is an isomorphism and $\pi_1\beta^{\Lambda} \Omega^*=\beta\pi_1\Omega^*=\beta\Omega\pi_2$.

We say that an isomorphism $\delta:L_1\to L_2$ lifts if there is an isomorphism $\Omega:G_1\to G_2$ such
that $N_1^\Omega=N_2$ and $\Omega^*=\delta$.

\begin{prop}\label{tool} Let $\gamma:L_1\to L_2$ be an isomorphism and $U$ a subset of $\mathrm{Aut}(L_1)$ such that
$\mathrm{Aut}(L_1)=\mathrm{Aut}(G_1)^{\Lambda}\, U$. Then, there is an isomorphism $\Omega:G_1\to G_2$ such that $N_1^\Omega=N_2$
if and only if there is some $u\in U$ such that $u\gamma$ lifts.
\end{prop}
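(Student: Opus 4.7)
The plan is to handle the two implications separately. The \emph{if} direction is immediate from the very definition of ``lifts'': if $u\gamma$ lifts, a lift is by definition an isomorphism $\Omega:G_1\to G_2$ with $N_1^\Omega=N_2$, which is exactly what we want.

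For the \emph{only if} direction, I would begin with any isomorphism $\Omega:G_1\to G_2$ such that $N_1^\Omega=N_2$, and form the induced map $\Omega^*:L_1\to L_2$. Then $\Omega^*\gamma^{-1}$ is an automorphism of $L_1$, so the hypothesis $\mathrm{Aut}(L_1)=\mathrm{Aut}(G_1)^\Lambda\, U$ yields $\beta\in\mathrm{Aut}(G_1)$ and $u\in U$ with $\Omega^*\gamma^{-1}=\beta^\Lambda u$, i.e., $\Omega^*=\beta^\Lambda\, u\gamma$.

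The next step is to modify $\Omega$ by setting $\Psi:=\beta^{-1}\Omega:G_1\to G_2$. Because $N_1$ is characteristic in $G_1$, we have $N_1^{\beta^{-1}}=N_1$, and therefore $N_1^\Psi=N_1^\Omega=N_2$, so $\Psi^*$ is defined. Applying (\ref{match}) with $\beta$ replaced by $\beta^{-1}$, and using that $\Lambda$ is a homomorphism (so $(\beta^{-1})^\Lambda=(\beta^\Lambda)^{-1}$), I obtain
\begin{equation*}
\Psi^*=(\beta^{-1})^\Lambda\,\Omega^*=(\beta^\Lambda)^{-1}\,\beta^\Lambda\, u\gamma=u\gamma,
\end{equation*}
which exhibits $\Psi$ as a lift of $u\gamma$.

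The whole argument is essentially bookkeeping around (\ref{match}); the only point that genuinely requires verification is that $\beta^{-1}\Omega$ still sends $N_1$ to $N_2$, and this is automatic because $N_1$ is characteristic in $G_1$. I therefore do not expect any substantial obstacle.
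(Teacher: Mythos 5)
Your proof is correct and follows essentially the same route as the paper: sufficiency from the definition of lifting, and for necessity writing $\Omega^*=\beta^\Lambda u\gamma$ via the decomposition $\mathrm{Aut}(L_1)=\mathrm{Aut}(G_1)^\Lambda U$ and then checking via (\ref{match}) that $\beta^{-1}\Omega$ lifts $u\gamma$. Your added verifications (that $N_1^{\beta^{-1}\Omega}=N_2$ and that $(\beta^{-1})^\Lambda=(\beta^\Lambda)^{-1}$) are just the details the paper leaves implicit.
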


\begin{proof} Sufficiency is clear. As for necessity, $\Omega^*\gamma^{-1}\in \mathrm{Aut}(L_1)$, so $\Omega^*=\beta^{\Lambda} u\gamma$
for some $\beta\in \mathrm{Aut}(G_1)$ and $u\in U$. It then follows from (\ref{match}) that
$(\beta^{-1}\Omega)^*=u\gamma$.
\end{proof}

\begin{prop}\label{tool2} Let $\gamma:L_1\to L_2$ be an isomorphism, $U$ a subset of $\mathrm{Aut}(L_1)$ such that
$\mathrm{Aut}(L_1)=\mathrm{Aut}(G_1)^\Lambda\, U$, and $E$ a generating subset of $G_1$.
For $e\in E$ and $u\in U$, let $h_{e,u}$ be any element of $G_2$ satisfying $e \pi_1 u \gamma=h_{e,u}\pi_2$.
Suppose that given
any $(f_e)_{e\in E}$, with $f_e\in N_1$, there exists $\beta\in \mathrm{Aut}(G_1)$ such that $e^\beta=e f_e$ for all $e\in E$.

Then, there is an isomorphism $\Omega:G_1\to G_2$ such that $N_1^\Omega=N_2$ if and only if
there is some $u\in U$ such that the assignment
$$
e\mapsto h_{e,u},\quad e\in E,
$$
extends to an isomorphism $\Delta:G_1\to G_2$ satisfying $N_1^\Delta=N_2$.
\end{prop}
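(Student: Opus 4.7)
The plan. Sufficiency is immediate: the provided $\Delta$ already serves as the required $\Omega$, since by construction $N_1^\Delta=N_2$. The forward direction is the substantive half. Suppose that some isomorphism $\Omega\colon G_1\to G_2$ with $N_1^\Omega=N_2$ exists. The first step is to appeal to Proposition~\ref{tool} to obtain $u\in U$ and $\beta\in\mathrm{Aut}(G_1)$ such that the isomorphism $\Omega':=\beta^{-1}\Omega$ satisfies $(\Omega')^*=u\gamma$. Because $N_1$ is characteristic in $G_1$, we still have $N_1^{\Omega'}=N_2$, so nothing is lost in this replacement.

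The second step is to adjust $\Omega'$ on the generators. From $\pi_1(\Omega')^*=\Omega'\pi_2$ and $(\Omega')^*=u\gamma$, for each $e\in E$ the element $e\Omega'$ and the chosen representative $h_{e,u}$ have the same $\pi_2$-image, so
\[
m_e:=h_{e,u}^{-1}(e\Omega')\in N_2.
\]
Since $\Omega'$ restricts to an isomorphism $N_1\to N_2$, one can pick $f_e\in N_1$ with $f_e\Omega'=m_e^{-1}$. The hypothesis on $G_1$ now delivers some $\beta'\in\mathrm{Aut}(G_1)$ with $e^{\beta'}=ef_e$ for all $e\in E$. Setting $\Delta:=\beta'\Omega'$, a direct computation gives $e^\Delta=(ef_e)\Omega'=(e\Omega')(f_e\Omega')=h_{e,u}m_em_e^{-1}=h_{e,u}$ for every $e\in E$, and $N_1^\Delta=N_2$ since $N_1$ is characteristic in $G_1$.

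The main obstacle is ensuring that the two corrections interact correctly. The first correction aligns the induced quotient map with $u\gamma$; the second correction must fix the images of the generators without disturbing this alignment. This is automatic because $f_e\in N_1$ forces $\beta'$ to lie in the kernel of $\Lambda$, so $(\beta'\Omega')^*=(\beta')^\Lambda (\Omega')^*=u\gamma$ still holds. The verification that $f_e\in N_1$ with the prescribed value $m_e^{-1}$ exists depends only on $\Omega'|_{N_1}$ being an isomorphism onto $N_2$, which is guaranteed by $N_1^{\Omega'}=N_2$. Once these checks are recorded, the identity $e^\Delta=h_{e,u}$ on the generating set $E$ suffices to conclude that $\Delta$ is the required lift.
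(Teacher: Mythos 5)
Your proof is correct and follows essentially the same route as the paper: apply Proposition~\ref{tool} to replace $\Omega$ by an isomorphism whose induced map on quotients is $u\gamma$, then use the hypothesis on automorphisms moving generators by elements of $N_1$ to correct the images of the generators to the chosen representatives $h_{e,u}$. The extra remark that $\beta'$ lies in $\ker\Lambda$ is true but not needed for the conclusion.
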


\begin{proof} Sufficiency is clear. As for necessity, by Proposition \ref{tool}, there is some $u\in U$
and an isomorphism $\Delta:G_1\to G_2$ such that $N_1^\Delta=N_2$ and $\Delta^*=u\gamma$. Then
$$
h_{e,u}\pi_2=e \pi_1 u\gamma=e\Delta\pi_2,\quad e\in E,
$$
so there exist $(k_e)_{e\in E}$, with $k_e\in N_2$, such that $h_{e,u}k_e=e\Delta$ for all $e\in E$. Since $N_1^\Delta=N_2$,
there exist $(f_e)_{e\in E}$, with $f_e\in N_1$, such that $f_e^\Delta=k_e^{-1}$ for all $e\in E$. By assumption, there
exists $\beta\in \mathrm{Aut}(G_1)$ such that $e^\beta=e f_e$. Then $e^{\beta\Delta}=h_{e,u}$ for all $e\in E$.
\end{proof}

The above will be applied when $N_1$ and $N_2$ are the centers of $G_1$ and $G_2$, respectively,
in which case $N_1$ and $N_2$ are characteristic subgroups of $G_1$ and $G_2$, and any isomorphism
$\Omega:G_1\to G_2$ satisfies $N_1^\Omega=N_2$.

\section{Necessity for $J_1(\alpha)$}\label{caso1}

We assume throughout this section that we are in Case 1.
Note that $\alpha$ and $\alpha'$ have order $p^m$ modulo $p^{2m}$, so that
$\alpha'\equiv\alpha^t\mod p^{2m}$, where $t$ is a positive integer relatively prime to $p$, uniquely
determined modulo $p^{m}$. Thus, the following conditions are equivalent: $\ell\equiv \ell'\mod p^m$;
$\alpha\equiv\alpha'\mod p^{2m}$; $t\equiv 1\mod p^m$.

By \cite[Section 3]{MS2}, we have $H_1(\alpha)=\langle A,B\,|\, A^{[A,B]}=A^\alpha,\, B^{[B,A]}=B^\alpha, A^{p^{2m}}=1=B^{p^{2m}}\rangle$
and
$
H_1(\alpha')=\langle X,Y\,|\, X^{[X,Y]}=X^{\alpha'},\, Y^{[Y,X]}=Y^{\alpha'},  X^{p^{2m}}=1=Y^{p^{2m}}\rangle.
$
Moreover, by \cite[Section 2]{MS2}, we have
$
K_1(\alpha)=\langle a,b\,|\, a^{[a,b]}=a^\alpha,\, b^{[b,a]}=b^\alpha, a^{p^{2m}}=1=b^{p^{2m}}, [a,b]^{p^{m}}=1\rangle
$
and
$
K_1(\alpha')=\langle x,y\,|\, x^{[x,y]}=x^{\alpha'},\, y^{[y,x]}=y^{\alpha'},  x^{p^{2m}}=1=y^{p^{2m}},[x,y]^{p^{m}}=1\rangle.
$
There are automorphisms, say $\nu$ and $\mu$, of $H_1(\alpha')$ and $K_1(\alpha')$, given by $X\leftrightarrow Y$
and $x\leftrightarrow y$.

Consider the projection homomorphisms $\pi_\alpha:H_1(\alpha)\to K_1(\alpha)$, given by $A\mapsto a$ and $B\mapsto b$,
and $\pi_{\alpha'}:H_1(\alpha')\to K_1(\alpha')$, given by $X\mapsto x$ and $Y\mapsto y$.

We have the natural homomorphism $\Lambda:\mathrm{Aut}(H_1(\alpha'))\to \mathrm{Aut}(K_1(\alpha'))$ such that
$\pi_{\alpha'}\beta^\Lambda= \beta\pi_{\alpha'}$ for all $\beta\in \mathrm{Aut}(H_1(\alpha'))$.

\begin{prop}\label{authh} For every $z_1,z_2\in Z(H_1(\alpha'))$, the assignment $X\mapsto Xz_1,\; Y\mapsto Yz_2$
extends to an automorphism of $H_1(\alpha')$.
\end{prop}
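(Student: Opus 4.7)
The plan is to apply the universal property of the presentation of $H_1(\alpha')$: verify that $X' := Xz_1$ and $Y' := Yz_2$ satisfy the four defining relations of $H_1(\alpha')$, yielding an endomorphism $\psi$, and then show $\psi$ is invertible by constructing its inverse explicitly.

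Checking the relations is a short computation. Since $z_1,z_2$ are central, $[X',Y']=[X,Y]$; writing $C=[X,Y]$, the relation $(X')^{C}=(X')^{\alpha'}$ reduces to $X^{\alpha'}z_1=X^{\alpha'}z_1^{\alpha'}$, i.e., $z_1^{\alpha'-1}=z_1^{p^m\ell'}=1$, which, since $p\nmid\ell'$, amounts to $z_1^{p^m}=1$. The order relation $(X')^{p^{2m}}=z_1^{p^{2m}}=1$ is then automatic, and the two $Y$-relations impose the analogous $z_2^{p^m}=1$. The structural input required is therefore that $Z(H_1(\alpha'))$ has exponent dividing $p^m$; I would extract this from the description of the upper central series of $J_1(\alpha')$ in \cite{MS}, via the identification $Z(H_1(\alpha'))=Z_2(J_1(\alpha'))/Z(J_1(\alpha'))$ (indeed, this quotient is cyclic of order $p^m$).

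The main obstacle is to promote $\psi$ to an automorphism. I construct an explicit inverse: by the same argument, $X\mapsto Xz_1^{-1}$, $Y\mapsto Yz_2^{-1}$ extends to an endomorphism $\psi'$. The key observation is that for $w\in[H_1(\alpha'),H_1(\alpha')]$, substituting central corrections $z_1,z_2$ into any commutator word representing $w$ leaves it unchanged, because $[u(Xz_1,Yz_2),v(Xz_1,Yz_2)]=[u(X,Y),v(X,Y)]$ for arbitrary words $u,v$; consequently both $\psi$ and $\psi'$ restrict to the identity on $[H_1(\alpha'),H_1(\alpha')]$. Granting the second structural fact, extractable from \cite{MS2}, that $Z(H_1(\alpha'))\subseteq[H_1(\alpha'),H_1(\alpha')]$, we have in particular $\psi'(z_1)=z_1$ and $\psi'(z_2)=z_2$. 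Using the paper's left-to-right composition convention, $(\psi\psi')(X)=\psi'(\psi(X))=\psi'(Xz_1)=(Xz_1^{-1})\cdot z_1=X$, and analogously $(\psi\psi')(Y)=Y$, so $\psi\psi'=\mathrm{id}$; by symmetry $\psi'\psi=\mathrm{id}$, and $\psi$ is an automorphism. The real content of the proof is thus the verification of the two structural facts about $Z(H_1(\alpha'))$---its exponent bound and its containment in the derived subgroup---both of which rest on the detailed analysis of $J_1(\alpha')$ carried out in the authors' earlier papers.
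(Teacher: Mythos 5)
Your argument is correct. Note that the paper itself offers no proof here beyond the citation to \cite[Proposition 3.1]{MS2}, so there is no in-text argument to compare against; what you give is a legitimate self-contained proof. Your relation check is right: centrality of $z_1,z_2$ gives $[Xz_1,Yz_2]=[X,Y]=C$, and the first relation reduces to $z_1^{p^m\ell'}=1$, which (in a finite $p$-group with $p\nmid\ell'$) is exactly $z_1^{p^m}=1$; the order relations are then automatic. Both structural inputs you flag are corroborated by data quoted elsewhere in this very paper: since $Z(H_1(\alpha'))=Z_2(J_1(\alpha'))/Z_1(J_1(\alpha'))$ and the proof of Theorem \ref{csf1} records $Z_1(J)=\langle A^{p^{2m}}\rangle$, $Z_2(J)=\langle A^{p^{2m}},C^{p^m}\rangle$ with $Z_2(J)/Z_1(J)\cong\Z/p^m\Z$, the center of $H_1(\alpha')$ is cyclic of order $p^m$ generated by the image of $[X,Y]^{p^m}$, which gives simultaneously the exponent bound and the containment $Z(H_1(\alpha'))\subseteq[H_1(\alpha'),H_1(\alpha')]$. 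Your invertibility step is also sound: an endomorphism induced by central perturbations of the generators fixes every commutator (hence the derived subgroup pointwise), so $\psi'$ fixes $z_1,z_2$ and $\psi\psi'=\psi'\psi=\mathrm{id}$. A marginally quicker finish would be to observe that $Z(H_1(\alpha'))\subseteq[H_1(\alpha'),H_1(\alpha')]\subseteq\Phi(H_1(\alpha'))$, so $Xz_1,Yz_2$ generate and the endomorphism of a finite group is surjective, hence bijective; but your explicit inverse is equally valid and arguably more informative.
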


\begin{proof} This is shown in \cite[Proposition 3.1]{MS2}.
\end{proof}

\begin{prop}\label{suf2} The assignment
\begin{equation}
\label{cta}
x\mapsto a,\; y\mapsto b^t
\end{equation}
extends to an isomorphism $\gamma:K_1(\alpha')\to K_1(\alpha)$.
\end{prop}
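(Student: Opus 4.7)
The plan is to invoke the universal property of the presentation of $K_1(\alpha')$: to build $\gamma$ it is enough to check that the pair $(a,b^t)$ in $K_1(\alpha)$ satisfies the five defining relations of $K_1(\alpha')$, namely $a^{p^{2m}}=1$, $(b^t)^{p^{2m}}=1$, $[a,b^t]^{p^m}=1$, $a^{[a,b^t]}=a^{\alpha'}$ and $(b^t)^{[b^t,a]}=(b^t)^{\alpha'}$. The two order relations are immediate, and I would reduce the other three to one clean computation: the claim that $[a,b^t]$ and $c^t$ differ by a central element, where $c=[a,b]$.

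First I would assemble the commutator arithmetic inside $K_1(\alpha)$. The relations $a^c=a^\alpha$ and $b^{c^{-1}}=b^\alpha$ yield the swap rules $bc^{-1}=c^{-1}b^\alpha$ (hence $b^kc^{-1}=c^{-1}b^{k\alpha}$) and $cb=b^\alpha c$ (hence $c^sb^k=b^{k\alpha^s}c^s$). Moreover, writing $[a^k,b]=c^{a^{k-1}}\cdots c^a\,c$ and using $c^{a^j}=c\,a^{-jp^m\ell}$, a telescoping produces
\[
[a^k,b]=c^k\,a^{-p^m\ell\binom{k}{2}}.
\]
With $k=p^m$ and $p$ odd this collapses to $1$, so $a^{p^m}$, and symmetrically $b^{p^m}$, is central in $K_1(\alpha)$.

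Using the swap rule, $(b^t)^a=(bc^{-1})^t=(c^{-1}b^\alpha)^t=c^{-t}b^{S}$ with $S=\alpha+\alpha^2+\cdots+\alpha^t$, so
\[
[a,b^t]=b^{-S+\alpha^t t}\,c^t.
\]
Expanding $\alpha^t=(1+p^m\ell)^t$ through second order gives $\alpha^t t-S\equiv\binom{t}{2}p^m\ell\pmod{p^{2m}}$, hence $[a,b^t]=zc^t$ with $z=b^{\binom{t}{2}p^m\ell}\in\langle b^{p^m}\rangle\subseteq Z(K_1(\alpha))$. The three remaining relations then fall out: $a^{[a,b^t]}=a^{c^t}=a^{\alpha^t}=a^{\alpha'}$ since $\alpha'\equiv\alpha^t\pmod{p^{2m}}$ and $|a|=p^{2m}$; dually $(b^t)^{[b^t,a]}=(b^t)^{c^{-t}}=(b^{\alpha^t})^t=(b^t)^{\alpha'}$; and $[a,b^t]^{p^m}=z^{p^m}c^{tp^m}=1$ as $c^{p^m}=1$ and $b^{p^{2m}}=1$.

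Finally, $\gamma$ is a homomorphism between groups of the same order $p^{5m}$; since $\gcd(t,p)=1$ and $|b|=p^{2m}$, $b\in\langle b^t\rangle$, so $a,b$ lie in the image of $\gamma$ and $\gamma$ is surjective, hence bijective. I expect the main technical obstacle to be the second-order expansion $\alpha^t t-S\equiv\binom{t}{2}p^m\ell\pmod{p^{2m}}$: one has to retain enough of the $p^m\ell$-expansion of $\alpha^t$ and of the geometric sum $S$ to identify the surviving $b$-exponent as a multiple of $p^m$, placing it in the center of $K_1(\alpha)$, after which every relation collapses painlessly.
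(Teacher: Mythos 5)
Your proof is correct, but it is worth noting that the paper does not actually prove this proposition internally: it simply cites \cite[Proposition 2.6]{MS2}. What you have written is a self-contained verification, and every step checks out. Your commutator identity $[a,b^t]=b^{\,t\alpha^t-S}c^t$ with $S=\alpha+\cdots+\alpha^t$ is exactly the identity the paper uses in the proof of Theorem \ref{suf} (there written as $[A,B^f]=B^{(\alpha-1)(\alpha+2\alpha^2+\cdots+(f-1)\alpha^{f-1})}C^f$; the two exponents agree by Abel summation), and your reduction $t\alpha^t-S\equiv\binom{t}{2}p^m\ell\pmod{p^{2m}}$ is the same first-order expansion the paper performs there. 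The one ingredient you supply that the paper would instead pull from \cite{MS} is the centrality of $a^{p^m}$ and $b^{p^m}$ in $K_1(\alpha)$; your telescoping computation $[a^k,b]=c^k a^{-p^m\ell\binom{k}{2}}$ together with $c^{p^m}=1$ and $p$ odd establishes this correctly (and this is where the Case 1 hypothesis that $p$ is odd is genuinely used, since $\binom{p^m}{2}$ must be divisible by $p^m$). The surjectivity-plus-equal-order conclusion is fine, given that $|K_1(\alpha)|=|K_1(\alpha')|=p^{5m}$ follows from the order and second-center computations of \cite{MS} quoted elsewhere in the paper. In short: your argument is a valid, essentially elementary replacement for the external citation, using the same computational technique the authors deploy in their sufficiency theorem.
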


\begin{proof} This is shown in \cite[Proposition 2.6]{MS2}.
\end{proof}

\begin{prop}\label{tet} The homomorphism
$\Lambda:\mathrm{Aut}(H_1(\alpha'))\to \mathrm{Aut}(K_1(\alpha'))$ sends $\langle\nu\rangle$ onto $\langle\mu\rangle$
and $\mathrm{Aut}_3(H_1(\alpha'))$ onto
$\mathrm{Aut}_2(K_1(\alpha'))$.
\end{prop}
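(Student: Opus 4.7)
The plan is to prove the two assertions separately.

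For the first, $\Lambda(\langle\nu\rangle)=\langle\mu\rangle$, I would compute $\nu^\Lambda$ directly. Its defining property $\pi_{\alpha'}\nu^\Lambda=\nu\pi_{\alpha'}$, combined with $X\pi_{\alpha'}=x$, $Y\pi_{\alpha'}=y$, and the definition of $\nu$ as the swap of $X$ and $Y$, forces $\nu^\Lambda$ to swap $x$ and $y$, i.e., $\nu^\Lambda=\mu$. Since $\nu$ and $\mu$ both have order $2$, this yields $\Lambda(\langle\nu\rangle)=\langle\mu\rangle$ at once.

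For the second, I would begin by unpacking the relationship between the two upper central series. Since $\ker\pi_{\alpha'}=Z(H_1(\alpha'))\subseteq Z_i(H_1(\alpha'))$ for every $i\geq 1$, the standard identification of upper central series under a central quotient gives $\pi_{\alpha'}(Z_i(H_1(\alpha')))=Z_{i-1}(K_1(\alpha'))$, and in particular a canonical isomorphism $H_1(\alpha')/Z_3(H_1(\alpha'))\cong K_1(\alpha')/Z_2(K_1(\alpha'))$ under which the induced action of $\beta\in \mathrm{Aut}(H_1(\alpha'))$ matches the induced action of $\beta^\Lambda\in \mathrm{Aut}(K_1(\alpha'))$. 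This already shows that $\beta\in \mathrm{Aut}_3(H_1(\alpha'))$ if and only if $\beta^\Lambda\in \mathrm{Aut}_2(K_1(\alpha'))$, giving the inclusion $\Lambda(\mathrm{Aut}_3(H_1(\alpha')))\subseteq \mathrm{Aut}_2(K_1(\alpha'))$ and reducing the proof of the equality to a lifting problem.

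Surjectivity onto $\mathrm{Aut}_2(K_1(\alpha'))$ is the substantive step. Given $\sigma\in \mathrm{Aut}_2(K_1(\alpha'))$, I would write $x\sigma=xc_1$ and $y\sigma=yc_2$ with $c_1,c_2\in Z_2(K_1(\alpha'))$, choose preimages $d_1,d_2\in Z_3(H_1(\alpha'))$ via the identification above, and then produce an automorphism $\beta$ of $H_1(\alpha')$ with $X\beta=Xd_1$ and $Y\beta=Yd_2$. Once $\beta$ exists, the equivalence from the previous paragraph forces $\beta\in \mathrm{Aut}_3(H_1(\alpha'))$, and $\beta^\Lambda=\sigma$ by construction.

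The main obstacle is constructing such a $\beta$: Proposition \ref{authh} only handles the narrower case $d_i\in Z(H_1(\alpha'))$, so one must verify the defining relations $X^{[X,Y]}=X^{\alpha'}$, $Y^{[Y,X]}=Y^{\alpha'}$, $X^{p^{2m}}=Y^{p^{2m}}=1$ under the substitutions $X\mapsto Xd_1$, $Y\mapsto Yd_2$ when $d_1,d_2$ range over the strictly larger subgroup $Z_3(H_1(\alpha'))$. My plan is to exploit the fact that $[X,d_2]$, $[d_1,Y]$ and $[d_1,d_2]$ lie in deeper terms of the upper central series, so that $[Xd_1,Yd_2]$ differs from $[X,Y]$ only by controlled elements, expand both sides of each relation via the usual commutator calculus, and absorb any residual discrepancy inside $Z(H_1(\alpha'))$ by pre-composing with an automorphism supplied by Proposition \ref{authh}. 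Where direct calculation becomes opaque, I would fall back on the explicit description of $\mathrm{Aut}(H_1(\alpha'))$ in \cite{MS2} to obtain the required lift.
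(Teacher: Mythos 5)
Your treatment of the first assertion is fine, and your reduction of the second one sets up the right framework: since $\ker\pi_{\alpha'}=Z(H_1(\alpha'))$, the identification $Z_j(K_1(\alpha'))=Z_{j+1}(H_1(\alpha'))/Z(H_1(\alpha'))$ does show that $\beta\in\mathrm{Aut}_3(H_1(\alpha'))$ if and only if $\beta^\Lambda\in\mathrm{Aut}_2(K_1(\alpha'))$, so everything comes down to surjectivity onto $\mathrm{Aut}_2(K_1(\alpha'))$. (For comparison, the paper itself proves nothing here: its proof is a one-line citation of \cite[Proposition 3.4]{MS2}.)

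The gap is that this surjectivity step, which is the entire substance of the proposition, is never actually carried out. You need that for each $\sigma\in\mathrm{Aut}_2(K_1(\alpha'))$, writing $x\sigma=xc_1$ and $y\sigma=yc_2$, some choice of preimages $d_1,d_2\in Z_3(H_1(\alpha'))$ of $c_1,c_2$ makes $X\mapsto Xd_1$, $Y\mapsto Yd_2$ extend to an automorphism of $H_1(\alpha')$. Your plan is to verify the defining relations for arbitrary $d_1,d_2\in Z_3(H_1(\alpha'))$, but that is a genuinely nontrivial relation check --- e.g.\ for $d_1=Y^{p^m}$ one must control $(XY^{p^m})^{[XY^{p^m},Yd_2]}$ and match exponents modulo $p^{2m}$, not merely modulo the center --- and nothing in the write-up establishes it; you explicitly defer to ``commutator calculus'' and, failing that, to ``the explicit description of $\mathrm{Aut}(H_1(\alpha'))$ in \cite{MS2}'', which is precisely the external result whose content is being proved. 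Moreover, it is not a priori clear that \emph{every} pair $(d_1,d_2)\in Z_3(H_1(\alpha'))^2$ yields an automorphism rather than only those arising from actual elements of $\mathrm{Aut}_3(H_1(\alpha'))$; if you intend to prove the stronger claim you should at least sanity-check it against an order count comparing $|Z_3(H_1(\alpha'))|^2$, $|\ker\Lambda|=|Z(H_1(\alpha'))|^2$ and $|Z_2(K_1(\alpha'))|^2$. As it stands the proposal is a correct reduction followed by an unexecuted computation, not a proof.
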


\begin{proof} This is shown in \cite[Proposition 3.4]{MS2}.
\end{proof}

Given $n\in\Z$ relatively prime to $p$ with inverse $q\in\Z$ modulo $p^{2m}$, \cite[Proposition 2.3]{MS2} ensures
that the assignment
\begin{equation*}
x\mapsto x^n,\; y\mapsto y^q
\end{equation*}
extends to an automorphism, say $f_n$, of $K_1(\alpha')$, and we write $U$ for the subgroup of $\mathrm{Aut}(K_1(\alpha'))$
consisting of all such automorphisms.

\begin{theorem}\label{autk6} We have  $\mathrm{Aut}(K_1(\alpha'))=\mathrm{Aut}_2(K_1(\alpha'))\langle \mu \rangle U$.
\end{theorem}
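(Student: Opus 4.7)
The plan is to analyze the action of $\mathrm{Aut}(K_1(\alpha'))$ on the quotient $L:=K_1(\alpha')/Z_2(K_1(\alpha'))$. Using the structure from \cite{MS2} (in particular that $x^{p^m},y^{p^m}\in Z(K_1(\alpha'))$, that $[x,y]\in Z_2(K_1(\alpha'))$, and that $\bar x,\bar y$ freely generate $L$ as a $\Z/p^m\Z$-module), we identify $L\cong(\Z/p^m\Z)^2$ and observe that the natural map $\Theta:\mathrm{Aut}(K_1(\alpha'))\to\mathrm{Aut}(L)\cong\GL_2(\Z/p^m\Z)$ has kernel exactly $\mathrm{Aut}_2(K_1(\alpha'))$. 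Since $\Theta(f_n)=\begin{pmatrix}n&0\\0&n^{-1}\end{pmatrix}$ and $\Theta(\mu)=\begin{pmatrix}0&1\\1&0\end{pmatrix}$, the theorem reduces to showing that every $\Theta(\beta)$ is a monomial matrix of one of these two forms, i.e.\ that $\Theta(\mathrm{Aut}(K_1(\alpha')))\subseteq\Theta(\langle\mu\rangle U)$.

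Given $\beta\in\mathrm{Aut}(K_1(\alpha'))$, write $\beta(x)=u=x^ay^bz_1$ and $\beta(y)=v=x^cy^dz_2$ with $z_1,z_2\in Z_2(K_1(\alpha'))$, so that $\Theta(\beta)$ is represented by $M=\begin{pmatrix}a&b\\c&d\end{pmatrix}\bmod p^m$, necessarily invertible. The key step is to apply $\beta$ to the two central identities $[x,[x,y]]=x^{p^m\ell'}$ and $[y,[y,x]]=y^{p^m\ell'}$, and expand both sides in $Z(K_1(\alpha'))=\langle x^{p^m}\rangle\oplus\langle y^{p^m}\rangle$. Setting $\Delta=ad-bc$, bilinearity of the commutator pairing $K_1(\alpha')\times Z_2(K_1(\alpha'))\to Z(K_1(\alpha'))$ combined with $[u,v]\equiv[x,y]^{\Delta}\pmod{Z(K_1(\alpha'))}$ yields $[u,[u,v]]\equiv x^{a\Delta p^m\ell'}y^{-b\Delta p^m\ell'}$, the $z_i$-contributions vanishing because $[Z_2(K_1(\alpha')),[x,y]]=1$. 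A Hall--Petresco expansion of $u^{p^m\ell'}$, using $[x,y]^{p^m}=1$, $z_1^{p^m\ell'}=1$, and divisibility by $p^m$ of $\binom{p^m\ell'}{2}$ and $\binom{p^m\ell'}{3}$ (under generic $(p,m)$), gives $u^{p^m\ell'}\equiv x^{ap^m\ell'}y^{bp^m\ell'}$. Comparing the $x^{p^m}$- and $y^{p^m}$-components yields $a(\Delta-1)\equiv 0\equiv b(\Delta+1)\pmod{p^m}$; the symmetric treatment of the second central identity gives $c(\Delta+1)\equiv 0\equiv d(\Delta-1)\pmod{p^m}$.

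Since $p$ is odd, $p$ cannot divide both $\Delta-1$ and $\Delta+1$ (whose difference is $-2$). On the other hand, if neither is divisible by $p$, both are units mod $p^m$, forcing $a\equiv b\equiv c\equiv d\equiv 0\pmod{p^m}$ and contradicting invertibility of $M$. Hence exactly one of $\Delta\equiv\pm 1\pmod p$ holds. If $\Delta\equiv 1\pmod p$, then $\Delta+1$ is a unit mod $p^m$, so $b\equiv c\equiv 0\pmod{p^m}$; then $\Delta=ad$ with $a,d$ units mod $p$, and $a(ad-1)\equiv 0\pmod{p^m}$ forces $ad\equiv 1\pmod{p^m}$, giving $\Theta(\beta)=\Theta(f_a)$. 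If $\Delta\equiv -1\pmod p$, symmetry yields $a\equiv d\equiv 0$ and $bc\equiv 1\pmod{p^m}$, giving $\Theta(\beta)=\Theta(\mu f_c)$. In either case $\Theta(\beta)\in\Theta(\langle\mu\rangle U)$, hence $\beta\in\mathrm{Aut}_2(K_1(\alpha'))\langle\mu\rangle U$. The main obstacle will be the commutator and power bookkeeping---verifying that the $z_i$-contributions and the Hall--Petresco correction terms genuinely vanish in $Z(K_1(\alpha'))$---with the boundary case $(p,m)=(3,1)$ requiring particular care since $\binom{3\ell'}{3}$ need not be divisible by $3$, so the triple-commutator correction must be handled by a direct argument exploiting the hypothesis $\ell\equiv 1\bmod 3$.
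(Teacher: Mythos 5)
The paper does not actually prove this theorem; it quotes it verbatim from \cite[Theorem 2.7]{MS2}, so your argument is necessarily an independent route. Your strategy is reasonable and your endgame is correct: since $\mathrm{Aut}_2(K_1(\alpha'))$ is by definition the kernel of the induced map $\Theta:\mathrm{Aut}(K_1(\alpha'))\to\mathrm{Aut}(K_1(\alpha')/Z_2(K_1(\alpha')))\cong\GL_2(\Z/p^m\Z)$, the theorem is equivalent to $\Theta(\mathrm{Aut}(K_1(\alpha')))\subseteq\Theta(\langle\mu\rangle U)$, and your analysis of the two relations $[x,[x,y]]=x^{p^m\ell'}$, $[y,[y,x]]=y^{p^m\ell'}$, together with invertibility of $M$ and the oddness of $p$, correctly pins the image down to the matrices $\mathrm{diag}(n,n^{-1})$ and their $\mu$-translates \emph{provided} the four congruences $a(\Delta-1)\equiv b(\Delta+1)\equiv c(\Delta+1)\equiv d(\Delta-1)\equiv 0\pmod{p^m}$ really hold. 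The left-hand sides $[u,[u,v]]=[u,c]^{\Delta}$ are computed exactly (the $z_i$ and the weight-$3$ defects of bilinearity all land in $Z$ or die because $Z_2(K_1(\alpha'))$ is abelian), so the whole burden falls on the identity $u^{p^m\ell'}=x^{ap^m\ell'}y^{bp^m\ell'}$.

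That is where the gap is, and it is wider than you acknowledge. In a class-$3$ group the collection formula reads $(gh)^n=g^nh^n[h,g]^{\binom n2}[[h,g],h]^{\binom n2+2\binom n3}[[h,g],g]^{\binom n3}$, and with $g=x^a$, $h=y^b$, $n=p^m\ell'$ the weight-$3$ commutators are $x^{\pm p^m\ell'a^2b}$ and $y^{\pm p^m\ell'ab^2}$, elements of order exactly $p^m$. One has $v_3\bigl(\binom{3^m\ell'}{3}\bigr)=m-1$ for \emph{every} $m$, not only for $m=1$, so for $p=3$ your congruences acquire error terms of the shape $3^{m-1}\cdot(\mathrm{unit})\cdot a^2b$ and $3^{m-1}\cdot(\mathrm{unit})\cdot ab^2$; the divisibility you assert "under generic $(p,m)$" is false throughout $p=3$. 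For $m\ge 2$ the argument can be rescued by a bootstrap — the corrupted congruences still force $3^{m-1}\mid b,c$, after which the quadratic error terms vanish modulo $3^m$ and your conclusions $b\equiv c\equiv 0$, $ad\equiv1\pmod{3^m}$ are recovered — but you do not carry this out. More seriously, for $(p,m)=(3,1)$, which is a legitimate subcase of Case 1 (with $\ell\equiv1\bmod 3$), the error term has full strength and you offer no argument at all, only the promise of "a direct argument exploiting the hypothesis $\ell\equiv1\bmod 3$". Until the $p=3$ analysis (and in particular the $(3,1)$ case) is actually supplied, the proof is incomplete.
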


\begin{proof} This is shown in \cite[Theorem 2.7]{MS2}.
\end{proof}

\begin{theorem}\label{exel} Suppose that
$H_1(\alpha')\cong H_1(\alpha)$. Then $\alpha'\equiv\alpha\mod p^{2m}$.
\end{theorem}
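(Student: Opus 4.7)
The plan is to leverage the tools developed in Section \ref{si}. Suppose $\Omega:H_1(\alpha')\to H_1(\alpha)$ is an isomorphism. Since the center of each $H_i(\alpha)$ is characteristic, $\Omega$ induces an isomorphism $\Omega^{*}:K_1(\alpha')\to K_1(\alpha)$. By Proposition \ref{tet}, both $\mathrm{Aut}_2(K_1(\alpha'))$ and $\langle\mu\rangle$ lie in $\mathrm{Aut}(H_1(\alpha'))^{\Lambda}$; combined with Theorem \ref{autk6} this gives $\mathrm{Aut}(K_1(\alpha'))=\mathrm{Aut}(H_1(\alpha'))^{\Lambda}U$. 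Proposition \ref{authh} supplies the hypothesis of Proposition \ref{tool2} for the generating set $E=\{X,Y\}$. Applying Proposition \ref{tool2} with $\gamma$ from Proposition \ref{suf2}, I obtain an integer $n$ coprime to $p$ with inverse $q$ modulo $p^{2m}$ such that the assignment
\[
X\mapsto A^n,\qquad Y\mapsto B^{tq}
\]
extends to an isomorphism $\Delta:H_1(\alpha')\to H_1(\alpha)$.

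For $\Delta$ to be well defined, the relation $X^{[X,Y]}=X^{\alpha'}$ of $H_1(\alpha')$ must translate into $(A^n)^{[A^n,B^{tq}]}=A^{n\alpha'}$ in $H_1(\alpha)$. Since $\gcd(n,p)=1$ and $A$ has order $p^{2m}$ in $H_1(\alpha)$, this is equivalent to the condition that conjugation by $[A^n,B^{tq}]$ sends $A$ to $A^{\alpha'}$.

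I would then compute $[A^n,B^{tq}]$ explicitly, building on the identity
\[
[A,B^s]=B^{(\alpha-1)(\alpha+2\alpha^2+\cdots+(s-1)\alpha^{s-1})}C^s
\]
established during the proof of Theorem \ref{suf}(a), and iterating $[xy,z]=[x,z]^{y}[y,z]$ to pass from $[A,B^{tq}]$ to $[A^n,B^{tq}]$. The principal $C$-component $C^{ntq}$ conjugates $A$ as $A\mapsto A^{\alpha^{ntq}}$; since $nq\equiv 1\pmod{p^{2m}}$ and $\alpha'\equiv\alpha^t\pmod{p^{2m}}$, this leading contribution alone already satisfies $\alpha^{ntq}\equiv\alpha'\pmod{p^{2m}}$, so it imposes no restriction on $t$. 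The real constraint must come from the correction terms, which involve powers of $B$ and interact nontrivially with $A$ via $A^B=AC$.

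The main obstacle is the careful commutator bookkeeping of these correction terms modulo $p^{2m}$: one must unwind contributions of $[A,C]=A^{\alpha-1}$, $[B,C]=B^{\beta-1}$ (where $\beta=\alpha^{-1}$), and the collection identities needed to put $[A^n,B^{tq}]$ into a standard form. Once the exact action of $[A^n,B^{tq}]$ on $A$ is obtained and equated with $\alpha'$, the resulting congruence, after using $nq\equiv 1\pmod{p^{2m}}$ and $\alpha'\equiv\alpha^t\pmod{p^{2m}}$, should force $t\equiv 1\pmod{p^m}$, equivalently $\alpha'\equiv\alpha\pmod{p^{2m}}$.
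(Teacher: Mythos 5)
Your reduction is exactly the paper's: you invoke Propositions \ref{authh}, \ref{suf2}, \ref{tet} and Theorem \ref{autk6} to feed Proposition \ref{tool2}, and you correctly arrive at the point where some assignment $X\mapsto A^n$, $Y\mapsto B^{qt}$ (with $nq\equiv 1\bmod p^{2m}$) must extend to an isomorphism. You also correctly observe that the leading term $C^{ntq}$ imposes no condition on $t$ and that the obstruction lives in the lower-order correction terms. Up to there the argument matches the paper.

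The gap is in what follows, and it is not merely that the computation is "tedious bookkeeping left to do." First, your plan analyzes only the relation $X^{[X,Y]}=X^{\alpha'}$. Working that relation out (as the paper does) one finds that the offending correction term is a power of $B$, namely $B^{(\alpha-1)ij(j-1)/2}$ with $i=-n$, $j=qt$, and forcing it into $C_{H_1(\alpha)}(A)=\langle A, C^{p^m}\rangle$ yields only $j=qt\equiv 1\bmod p^m$, i.e.\ $t\equiv n\bmod p^m$. Since $n$ is an unknown produced by Proposition \ref{tool2}, this alone cannot give $t\equiv 1\bmod p^m$. You must also impose the second relation $Y^{[Y,X]}=Y^{\alpha'}$, whose correction term is a power of $A$ that must land in $C_{H_1(\alpha)}(B)=\langle B, C^{p^m}\rangle$; that is what pins down $i\equiv -1$, i.e.\ $n\equiv 1\bmod p^m$, and only then does $t\equiv 1\bmod p^m$ follow. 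Second, extracting these congruences requires structural inputs you do not name and cannot avoid: the identification of the centralizers $C_{H_1(\alpha)}(A)$ and $C_{H_1(\alpha)}(B)$, the unique normal form for elements of $H_1(\alpha)$ as products from $\langle A\rangle$, $\langle B\rangle$, $\langle C\rangle$ (so that the $A$-, $B$-, and $C$-components can be compared separately), and $Z_2(H_1(\alpha))=\langle A^{p^m},B^{p^m},C^{p^m}\rangle$ together with Macdonald's identity for $(AC^j)^i$, which are what make the computation of $[A^{-i},B^j]$ modulo the center tractable. As written, "the resulting congruence should force $t\equiv 1\bmod p^m$" asserts the conclusion rather than deriving it, and the single relation you propose to use would not deliver it.
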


\begin{proof} By assumption there is an isomorphism $\Omega:H_1(\alpha')\to H_1(\alpha)$. We next apply Proposition~\ref{tool2}
with $G_1=H_1(\alpha')$, $G_2=H_1(\alpha)$, $\gamma$ as in (\ref{cta}), $E=\{X,Y\}$, and $U$ as given in Theorem \ref{autk6}.
This is a valid application thanks to Propositions \ref{authh} and \ref{tet}, and Theorem \ref{autk6}. If $u=f_n$, then
$u\gamma$ is given by
\begin{equation}
\label{cra}
x\mapsto a^n,\; y\mapsto b^{qt},
\end{equation}
so we can take $h_{X,u}=A^n$ and $h_{Y,u}=B^{qt}$. Thus, by Proposition \ref{tool2}, there is some $n\in\Z$ relatively
prime to $p$ such that the assignment $X\mapsto A^n,\; Y\mapsto B^{qt}$ extends to an isomorphism $P:H_1(\alpha')\to H_1(\alpha)$.
Set $C=[A,B]$. As the relations $X^{[X,Y]}=X^{\alpha'},\, Y^{[Y,X]}=Y^{\alpha'}$ must be preserved by~$P$, it follows that
\begin{equation}
\label{pin}
A^{[A^n,B^{qt}]}=A^{\alpha^t}=A^{C^t},\; B^{[B^{qt},A^n]}=B^{\alpha^t}=B^{C^{-t}}.
\end{equation}

Set $i=-n$ and $j=qt$. Thus $i,j$ are relatively prime to $p$. As $A$ and $B$ have finite order, we may assume without loss that $i,j>0$.
From $A^C=A^\alpha$ and $B^{C^{-1}}=B^\alpha$ we see, as before, that
$$
A^{B^j}=A B^{(\alpha-1)(\alpha+2\alpha^2+\cdots+(j-1)\alpha^{j-1})} C^j.
$$
As $\alpha\equiv 1\mod p^m$ and $B^{p^{2m}}=1$, it follows that
$$
A^{B^j}=A B^{(\alpha-1)j(j-1)/2}C^j.
$$
By \cite[Theorem 8.1]{MS}, we have $Z_2(H_1(\alpha))=\langle A^{p^m},  B^{p^m}, C^{p^m}\rangle$. Thus,
$$
(A^i)^{B^j}\equiv (A B^{(\alpha-1)j(j-1)/2} C^j)^i\equiv B^{(\alpha-1)ij(j-1)/2} (A C^j)^i\mod Z(H_1(\alpha)).
$$
This and \cite[Eq. (1.3)]{M} yield
$$
(A^i)^{B^j}\equiv B^{(\alpha-1)ij(j-1)/2} C^{ij} A^{\alpha^{j}(\alpha^{ij}-1)/(\alpha^{j}-1)}\mod Z(H_1(\alpha)).
$$
Here
$$
\alpha^{j}(\alpha^{ij}-1)/(\alpha^{j}-1)\equiv i+(\alpha-1)ij (i+1)/2\mod p^{2m},
$$
so
$$
(A^i)^{B^j}\equiv B^{(\alpha-1)ij(j-1)/2} C^{ij} A^{i+(\alpha-1)ij (i+1)/2}\mod Z(H_1(\alpha)),
$$
and therefore
$$
(A^{-i})^{B^j}\equiv A^{-i-(\alpha-1)ij (i+1)/2} B^{-(\alpha-1)ij(j-1)/2} C^{-ij}\mod Z(H_1(\alpha)),
$$
$$
[A^{-i},B^j]\equiv A^{-(\alpha-1)ij (i+1)/2} B^{-(\alpha-1)ij(j-1)/2} C^{-ij}\mod Z(H_1(\alpha)).
$$
It now follows from (\ref{pin}) that
$$
B^{-(\alpha-1)ij(j-1)/2} C^{-ij}C^{-t}\in C_{H_1(\alpha)}(A),
A^{(\alpha-1)ij(i+1)/2} C^{ij} C^{t}\in C_{H_1(\alpha)}(B).
$$
In this regard, the proof of \cite[Theorem 8.1]{MS} shows the following:
$C_{H_1(\alpha)}(A)=\langle A, C^{p^m}\rangle$ and $C_{H_1(\alpha)}(B)=\langle B, C^{p^m}\rangle$;
$A$ and $B$ have order $p^{2m}$; every element of $H_1(\alpha)$ can be written in one and only one way
as product of elements taken from $\langle A\rangle$, $\langle B\rangle$, and $\langle C\rangle$,
in any fixed order. We deduce that $j\equiv 1\mod p^m$, $i\equiv -1\mod p^m$,
and $t\equiv -ij\mod p^{m}$, whence $t\equiv 1\mod p^{m}$.
\end{proof}

\section{Necessity for $J_2(\alpha)$ Part I}\label{part1}

We assume in this section that $p=2$ and $m>1$, and we set $s=2^{m-1}$, $u=s^2$, and $r=s/2$.

By \cite[Section 2]{MS3},
$
J_2(\alpha)=\langle A,B\,|\, A^{[A,B]}=A^{\alpha},\, B^{[B,A]}=B^{\alpha},  A^{2^{3m-1}}=1=B^{2^{3m-1}}\rangle
$
and
$
J_2(\alpha')=\langle X,Y\,|\, X^{[X,Y]}=X^{\alpha'},\, Y^{[Y,X]}=Y^{\alpha'},  X^{2^{3m-1}}=1=Y^{2^{3m-1}}\rangle.
$
In addition, as seen in \cite[Section 6]{MS3},
$
H_2(\alpha)=\langle A_0,B_0\,|\, A_0^{[A_0,B_0]}=A_0^\alpha,\, B_0^{[B_0,A_0]}=B_0^\alpha, A_0^{2^{2m-1}}=1=B_0^{2^{2m-1}}\rangle,
$
and
$
H_2(\alpha')=\langle X_0,Y_0\,|\, X_0^{[X_0,Y_0]}=X_0^{\alpha'},\, Y_0^{[Y_0,X_0]}=Y_0^{\alpha'},  X_0^{2^{2m-1}}=1=Y_0^{2^{2m-1}}\rangle.
$
There are automorphisms, say $\theta$ and $\nu$, of $J_2(\alpha')$ and $H_2(\alpha')$, given by $X\leftrightarrow Y$
and $X_0\leftrightarrow Y_0$.

Consider the projection homomorphisms $\pi_\alpha:J_2(\alpha)\to H_2(\alpha)$, given by $A\mapsto A_0$ and $B\mapsto B_0$,
and $\pi_{\alpha'}:J_2(\alpha')\to H_2(\alpha')$, given by $X\mapsto X_0$ and $Y\mapsto Y_0$.

We have the natural homomorphism $\Lambda:\mathrm{Aut}(J_2(\alpha'))\to \mathrm{Aut}(H_2(\alpha'))$ such that $\pi_{\alpha'} \beta^\Lambda=
\beta\pi_{\alpha'}$ for all $\beta\in \mathrm{Aut}(J_2(\alpha'))$.

\begin{prop}\label{cenj} The terms of the upper central series of $J_2(\alpha)$ are:
$$
    Z_1(J_2(\alpha))=\langle A^{{2u}}\rangle,\;
    Z_2(J_2(\alpha))=\langle  A^{{2u}},  C^{s}\rangle, Z_3(J_2(\alpha))=\langle A^{2s},  B^{2s}, C^{s}\rangle,
$$
with $Z_3(J_2(\alpha))$ abelian, $Z_4(J_2(\alpha))=\langle A^{s},  B^{s}, C\rangle$, and $Z_5(J_2(\alpha))=J_2(\alpha)$.
\end{prop}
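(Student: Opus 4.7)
\medskip

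\noindent\textbf{Proof plan for Proposition \ref{cenj}.}

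The plan is to compute the upper central series from the bottom up, exploiting the structural results about $J_2(\alpha)$ already available in \cite{MS} and \cite{MS3}. From those sources, every element of $J_2(\alpha)$ has a unique normal form $A^iB^jC^k$ (with $C=[A,B]$), the generators $A$ and $B$ have order $2^{3m-1}$, and one has the fundamental relation $A^{2^{2m-1}}B^{2^{2m-1}}=1$, together with $A^C=A^{\alpha}$ and $B^{C^{-1}}=B^{\alpha}$. From these relations the basic commutator formulas
\[
[A^i,C^k]=A^{i(\alpha^k-1)}, \qquad [B^j,C^k]=B^{-j(\alpha^k-1)},
\]
and an analogous expression for $[A^i,B^j]$ as a product of an $A$-power, a $B$-power and $C^{ij}$ are immediate, and these are the only tools I would need.

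First I would compute $Z_1$. Writing a general $g=A^iB^jC^k$ and imposing $[g,A]=1=[g,B]$, the three conditions reduce, modulo the known orders of $A$, $B$, $C$, to congruences of the form $j(\alpha-1)\equiv 0$, $i(\alpha-1)\equiv 0$, and $k(\alpha-1)\equiv 0$ in the appropriate cyclic factors. Since $v_2(\alpha-1)=m$ and the relevant orders involve $2^{3m-1}$ (for $A$, $B$) and $2^{2m-1}$ (for $C$, using the fundamental relation), these force $i\equiv 0\pmod{2^{2m-1}}$, $j\equiv 0\pmod{2^{2m-1}}$, and $k\equiv 0\pmod{2^{m-1}}$; combining with the fundamental relation this collapses to $Z_1=\langle A^{2u}\rangle$ of order $2^m$.

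Next I would compute $Z_2,Z_3,Z_4$ successively by the same method, now working modulo the previous center. At each step the commutator identities above, together with the normal form, reduce the central-element condition to congruences whose solutions are generated exactly by the claimed elements; the subsidiary claim that $Z_3$ is abelian follows because the commutators of its generators lie in $\langle A^{2u}\rangle=Z_1$ and the sharper congruences for $Z_3$ push them into the trivial subgroup. Finally, to check that no further factor of $Z_5/Z_4$ is missed, I would count: the claimed subgroups have orders $2^m,2^{2m},2^{4m-2},2^{6m-3},2^{7m-3}$, matching $|J_2(\alpha)|=2^{7m-3}$ and the known class~$5$ established in \cite{MS}, so all inclusions are forced to be equalities.

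The main obstacle is controlling the $2$-adic valuations in the expressions that arise from iterated conjugation, in particular sums such as $(\alpha-1)\bigl(\alpha+2\alpha^2+\cdots+(n-1)\alpha^{n-1}\bigr)$ that appear when pushing $A$ past $B^j$; because $v_2(\alpha-1)=m$ but the generating exponents involve $2^{m-1}$, the case $m=2$ is borderline and the contribution of $\binom{n}{2}$ must be tracked carefully. Handling this case uniformly, and using the fundamental relation to collapse $\langle A\rangle\cap\langle B\rangle$ correctly at each stage, is where the real work lies; everything else is bookkeeping once the commutator formulas are in hand.
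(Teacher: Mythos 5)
The paper does not reprove this statement: it is quoted verbatim from \cite{MS} (Theorem 8.1 and Proposition 9.2 there), so your from-scratch computation is a genuinely different route. The outline (normal form $A^iB^jC^k$, impose centrality, work up the series, count orders) is a sensible strategy, but as written it contains concrete errors that would derail it. First, your $Z_1$ computation: the condition that $C^k$ centralize $A$ is $A^{\alpha^k}=A$, i.e. $k(\alpha-1)\equiv 0$ modulo the order $2^{3m-1}$ of $A$, which forces $k\equiv 0\bmod 2^{2m-1}$ --- not $k\equiv 0\bmod 2^{m-1}$ as you assert. Your version would place $C^s$ in the center, contradicting the proposition itself, which puts $C^s$ in $Z_2\setminus Z_1$ (indeed $[A,C^s]=A^{\alpha^s-1}$ with $v_2(\alpha^s-1)=2m-1<3m-1$, so it is a nontrivial element of $\langle A^{2u}\rangle$). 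The modulus relevant to the exponent of $C$ is governed by the order of the element $C^k$ acts on, not by the order of $C$ itself.

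Second, your order count for $Z_4=\langle A^s,B^s,C\rangle$ is wrong: using $A^{2u}B^{2u}=1$ and $C^{2u}=A^{2us}$ one gets $|Z_4|=2^{5m-1}$ (equivalently $J/Z_4\cong\Z/2^{m-1}\Z\times\Z/2^{m-1}\Z$, as used later in Theorem \ref{csf2}), not $2^{6m-3}$; the two agree only when $m=2$. Note that your five orders still happen to multiply to $2^{7m-3}$, which illustrates why the closing step ``the orders match $|J_2(\alpha)|$, so all inclusions are equalities'' proves nothing: matching the product does not pin down the individual terms, and a central series $W_1\le\cdots\le W_5$ only gives $W_i\le Z_i$; the reverse inclusions are exactly what the congruence analysis must establish. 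Finally, the cross terms you defer (the corrections arising when commuting $A$-, $B$- and $C$-powers, encoded in the $\xi$-terms of \cite[Theorems 3.1--3.5]{MS3}) are where essentially all the work lies in this group --- compare the Appendix of this paper --- so the plan cannot be regarded as a proof until they are carried out.
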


\begin{proof} This is shown in \cite[Theorem 8.1 and Proposition 9.2]{MS}.
\end{proof}

\begin{prop}\label{autjo}
    For every $z_1,z_2\in Z_2(J_2(\alpha'))$ the assignment
    $X\mapsto Xz_1,\; Y\mapsto Yz_2$ extends to an automorphism of $J_2(\alpha')$.
\end{prop}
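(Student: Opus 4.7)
The plan is to verify that $X' = Xz_1$ and $Y' = Yz_2$ satisfy the defining relations of $J_2(\alpha')$; the universal property of the presentation then produces an endomorphism $\psi$ of $J_2(\alpha')$ with $X \mapsto X'$, $Y \mapsto Y'$. Bijectivity of $\psi$ follows from the inclusion $Z_2(J_2(\alpha')) \subseteq \Phi(J_2(\alpha'))$, since the generators $X^{2u} = (X^u)^2$ and $C^s = [X,Y]^s$ from Proposition \ref{cenj} are respectively a square and a commutator, hence Frattini; thus $X'$ and $Y'$ generate $J_2(\alpha')$ by the Burnside basis theorem, and finiteness promotes surjectivity to bijectivity.

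The structural input from $z_1, z_2 \in Z_2$ is that $[z_i, g] \in Z_1$ for every $g \in J_2(\alpha')$, making these commutators central, and that $z_1$ and $z_2$ commute since $Z_2 \subseteq Z_3$ is abelian by Proposition \ref{cenj}. For the power relations, the identity $(ab)^n = a^n b^n [b,a]^{\binom{n}{2}}$ valid when $[a,b]$ is central yields
\[
(Xz_1)^{2^{3m-1}} = X^{2^{3m-1}} z_1^{2^{3m-1}} [z_1, X]^{\binom{2^{3m-1}}{2}}.
\]
The first factor vanishes by the relation on $X$; the last vanishes since $[z_1, X] \in Z_1$ and $\binom{2^{3m-1}}{2}$ has $2$-adic valuation $3m-2 \ge m$ for $m \ge 1$, annihilating $Z_1$; and the middle vanishes since the explicit description of $Z_2$ forces the exponent of $Z_2$ to divide $2^{3m-1}$. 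The relation for $Y'$ is handled symmetrically.

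For the conjugation relation $(X')^{[X',Y']} = (X')^{\alpha'}$, since $z_1, z_2$ commute with $X$ and $Y$ modulo $Z_1$, one has $[Xz_1, Yz_2] = Cw$ with $w \in Z_1$ central; hence $(X')^{[X',Y']} = (X')^C = X^{\alpha'} z_1 [z_1, C]$. The power formula gives $(X')^{\alpha'} = X^{\alpha'} z_1^{\alpha'} [z_1, X]^{\binom{\alpha'}{2}}$, reducing the relation to the central identity
\[
[z_1, C] = z_1^{\alpha' - 1}\, [z_1, X]^{\binom{\alpha'}{2}}.
\]
Using the decomposition $z_1 = X^{2ua} C^{sb}$, the action $X^C = X^{\alpha'}$, the fundamental relation $X^{2u} Y^{2u} = 1$, and $\alpha' \equiv 1 \mod 2^m$, both sides become explicit powers of the generator of $Z_1$, and the identity follows by careful $2$-adic valuation tracking. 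The main obstacle is precisely this final step: the competing exponents have comparable $2$-adic valuations, and one must exploit the fine structure of $Z_1$ together with the fundamental relation to see the required cancellations take place. The argument parallels that of Proposition \ref{authh}, with additional delicacy because the twisting elements lie in the second, rather than the first, center.
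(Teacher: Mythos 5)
Your argument is correct, but it takes a genuinely different route from the paper: for this proposition the paper simply cites \cite[Proposition 7.2]{MS3}, whereas you give a self-contained verification of the presentation relations. Your reductions are sound. The Frattini/Burnside-basis argument for surjectivity works because $Z_2(J_2(\alpha'))=\langle X^{2u},C^s\rangle$ is generated by a square and a power of a commutator; the power relation works because $Z_2$ has exponent $2^{m+1}\mid 2^{3m-1}$ and $v_2\bigl(\binom{2^{3m-1}}{2}\bigr)=3m-2\ge m$, which kills $Z_1\cong\Z/2^m\Z$; and the conjugation relation does reduce exactly to the central identity $[z_1,C]=z_1^{\alpha'-1}[z_1,X]^{\binom{\alpha'}{2}}$. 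The one place you wave your hands --- ``careful $2$-adic valuation tracking'' --- does check out, and since that is where an error could hide it is worth recording why: writing $z_1=X^{2ua}C^{sb}$, one gets $[z_1,C]=X^{2ua(\alpha'-1)}=X^{4usa\ell'}=1$; on the other side $z_1^{\alpha'-1}=C^{2ub\ell'}=X^{2usb\ell'}$ (via $C^{2u}=X^{2us}$, which is the fundamental relation you actually need here rather than $X^{2u}Y^{2u}=1$), while $[z_1,X]=[C^{sb},X]=X^{-2u(\ell'+s)b}$ raised to $\binom{\alpha'}{2}=\alpha's\ell'$ gives $X^{-2us(\ell'+s)b\alpha'\ell'}$. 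Since $X^{2us}$ is an involution and $\ell'+s$, $\alpha'$, $\ell'$ are all odd, each of these two factors equals $X^{2us}$ precisely when $b$ is odd, so their product is always trivial and the identity holds; the mirror computation for $Y$ uses $Y^{2u}=X^{-2u}$ and $C^{2u}=Y^{2us}$ and works the same way. So the proof is complete once this computation is written out in full; what your route buys is independence from the external reference, at the cost of redoing commutator calculus that \cite{MS3} already packages.
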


\begin{proof} This is shown in \cite[Proposition 7.2]{MS3}.
\end{proof}

\begin{prop}\label{auth}
    For every $x,y\in Z_2(H_2(\alpha'))$ the assignment $X_0\mapsto X_0x,\; Y_0\mapsto Y_0y$ extends to an automorphism
		$\Pi_{(x,y)}$ of $H_2(\alpha')$.
\end{prop}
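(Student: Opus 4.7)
The plan is to mimic the proofs of Propositions \ref{authh} and \ref{autjo}: apply von Dyck's theorem to extend the assignment to an endomorphism of $H_2(\alpha')$, and then observe that $\Pi_{(x^{-1}, y^{-1})}$ provides a two-sided inverse, so $\Pi_{(x,y)}$ is an automorphism. The task thus reduces to verifying that $X_0 x$ and $Y_0 y$ satisfy each of the defining relations $U^{[U,V]} = U^{\alpha'}$, $V^{[V,U]} = V^{\alpha'}$, and $U^{2^{2m-1}} = V^{2^{2m-1}} = 1$ of $H_2(\alpha')$.

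The first step is to pin down $Z_2(H_2(\alpha'))$. Since $H_2(\alpha') = J_2(\alpha')/Z(J_2(\alpha'))$, there is a natural identification $Z_2(H_2(\alpha')) \cong Z_3(J_2(\alpha'))/Z_1(J_2(\alpha'))$. By Proposition \ref{cenj}, $Z_3(J_2(\alpha'))$ is abelian, hence so is $Z_2(H_2(\alpha'))$; in particular $x$ and $y$ commute with each other and $[x, g], [y, g] \in Z_1(H_2(\alpha'))$ for every $g \in H_2(\alpha')$.

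Set $C_0' = [X_0, Y_0]$. Using the commutator identities $[ab, c] = [a, c]^b [b, c]$ and $[a, bc] = [a, c][a, b]^c$ together with the centrality of $[x, Y_0]$, $[y, X_0]$, and $[x, y]$, I would expand $[X_0 x, Y_0 y] = C_0' w$ for some $w \in Z_1(H_2(\alpha'))$. Hence conjugation by $[X_0 x, Y_0 y]$ coincides with conjugation by $C_0'$, and the first defining relation becomes $(X_0 x)^{C_0'} = (X_0 x)^{\alpha'}$. Since $[X_0, x] \in Z_1$ is central, $\langle X_0, x\rangle$ is nilpotent of class at most $2$, and the Hall--Petresco identity yields $(X_0 x)^{n} = X_0^{n} x^{n} [x, X_0]^{\binom{n}{2}}$ for every integer $n$. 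Substituting $n = \alpha'$ and invoking $X_0^{C_0'} = X_0^{\alpha'}$ and $x^{C_0'} = x [x, C_0']$, the relation collapses to an identity in the abelian group $Z_2(H_2(\alpha'))$.

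The main obstacle will be the arithmetic bookkeeping: once the commutator and power identities have been expanded, I am left with a congruence in $Z_2(H_2(\alpha'))$ that must be checked using the known orders of the generators $X_0^{2^m}$, $Y_0^{2^m}$, and $C_0'^{2^{m-1}}$ of $Z_2(H_2(\alpha'))$, together with how $C_0'$ acts on $Z_2(H_2(\alpha'))$ modulo $Z_1(H_2(\alpha'))$; these ingredients I would import from \cite[Section 6]{MS3} and Proposition \ref{cenj}. The order relation $(X_0 x)^{2^{2m-1}} = 1$ is handled in the same way: Hall--Petresco produces $x^{2^{2m-1}} [x, X_0]^{2^{2m-1}(2^{2m-1}-1)/2}$, both of whose factors vanish for order reasons once the structure of $Z_1$ and $Z_2$ is in hand. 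The relations involving $Y_0 y$ follow by interchanging $X_0$ with $Y_0$.
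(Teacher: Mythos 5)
The paper does not actually prove this proposition; its ``proof'' is the citation \cite[Proposition 6.6]{MS3}, so your self-contained verification is necessarily a different route, and its skeleton is correct. Writing $C_0'=[X_0,Y_0]$ and $s=2^{m-1}$ (so $m\geq 2$ here), your reduction is exactly right: since $x,y\in Z_2(H_2(\alpha'))$ one gets $[X_0x,Y_0y]=C_0'w$ with $w$ central, and the class-two identity $(X_0x)^n=X_0^nx^n[x,X_0]^{\binom{n}{2}}$ turns the first defining relation into the single congruence
\[
[x,C_0']=x^{\alpha'-1}\,[x,X_0]^{\binom{\alpha'}{2}}
\]
in the abelian group $Z_2(H_2(\alpha'))$, with a mirror version for $Y_0y$; the order relations are routine. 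The one substantive step you defer is the verification of this congruence, and the proposition stands or falls with it, so let me confirm it closes. Both sides are multiplicative in $x$ (every commutator in sight lies in $Z_1(H_2(\alpha'))=\langle C_0'^{\,s}\rangle$ and $Z_2(H_2(\alpha'))$ is abelian), so it suffices to test the generators $x\in\{X_0^{2s},Y_0^{2s},C_0'^{\,s}\}$, where each factor turns out to be trivial: $x^{\alpha'-1}=x^{2s\ell'}=1$ because every element of $Z_2(H_2(\alpha'))$ has order dividing $2s$; $[x,C_0']=1$ because $X_0^{C_0'}=X_0^{\alpha'}$ and $Y_0^{C_0'}=Y_0^{\beta'}$ with $\alpha'\equiv\beta'\equiv 1\bmod 2s$ while $X_0,Y_0$ have order $2^{2m-1}$; and $[x,X_0]^{\binom{\alpha'}{2}}=[x,X_0]^{\alpha' s\ell'}=1$ because $[X_0^{2s},X_0]=[C_0'^{\,s},X_0]=1$ and $[Y_0^{2s},X_0]=C_0'^{\,-2s}$ is an \emph{even} power of $C_0'^{\,s}$, which has order $2s$. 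That last evenness is the one place where a genuine structure constant (e.g.\ \eqref{XA3}, i.e.\ \cite[Corollary 6.1]{MS3}) is indispensable; an odd power would have broken the argument, which is why the check cannot simply be waved through. Finally, for the automorphism conclusion it is cleaner to note that $X_0x$ and $Y_0y$ generate $H_2(\alpha')$ modulo $\Phi(H_2(\alpha'))\supseteq Z_2(H_2(\alpha'))$, so the endomorphism is surjective and hence bijective, rather than to invoke $\Pi_{(x^{-1},y^{-1})}$ as an inverse, since that requires the additional (true but not free) fact that $\Pi_{(x^{-1},y^{-1})}$ fixes $Z_2(H_2(\alpha'))$ pointwise.
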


\begin{proof} This is shown in \cite[Proposition 6.6]{MS3}.
\end{proof}

\begin{prop}\label{gamma} The assignment $X_0\mapsto X_0^{1+s},\; Y_0\mapsto Y_0X_0^s$ extends to an automorphism $\Gamma$ of
$H_2(\alpha')$ that belongs to $\mathrm{Aut}_3(H_2(\alpha'))$.
\end{prop}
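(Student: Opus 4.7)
The plan has three stages: check that the assignment respects each defining relation of $H_2(\alpha')$ so $\Gamma$ extends to an endomorphism, show surjectivity to upgrade to an automorphism, and verify the $\mathrm{Aut}_3$ condition. Write $X' = X_0^{1+s}$, $Y' = Y_0 X_0^s$, and $C_0 = [X_0, Y_0]$. By Proposition \ref{cenj} applied to $J_2(\alpha')/Z_1(J_2(\alpha'))$, we have $X_0^s \in Z_3(H_2(\alpha'))$, $X_0^{2s} \in Z_2(H_2(\alpha'))$, and $Z(H_2(\alpha')) = \langle C_0^s\rangle$.

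The torsion relation $(X')^{2u} = 1$ is immediate from $X_0^{2u} = 1$. For $(Y')^{2u} = 1$, I would expand $(Y_0 X_0^s)^{2u}$ by commutator collection; since $X_0^s \in Z_3$, the arising commutator corrections descend rapidly through the upper central series and are annihilated by the torsion relations. For the commutator relation $(X')^{[X', Y']} = (X')^{\alpha'}$, applying the identities $[a, bc] = [a, c][a, b]^c$ and $[ab, c] = [a, c]^b [b, c]$ yields
\[
[X_0^{1+s}, Y_0 X_0^s] = C_0^{X_0^{2s}} \cdot [X_0^s, Y_0]^{X_0^s}.
\]
The crucial simplification is $C_0^{X_0^{2s}} = C_0 \cdot X_0^{2s(1-\alpha')} = C_0$, because $2s(1-\alpha') = -2^{2m}\ell'$ is divisible by the exponent $2u = 2^{2m-1}$ of $X_0$. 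The factor $[X_0^s, Y_0]^{X_0^s}$ lies in $Z_2(H_2(\alpha'))$, so conjugating $X'$ by it changes $X'$ by an element of $Z(H_2(\alpha'))$; I would verify this central correction is trivial by bookkeeping with the explicit form of $[X_0^s, Y_0]$ and the order of $X_0$. This yields $(X')^{[X', Y']} = (X')^{C_0} = (X')^{\alpha'}$, and the symmetric calculation handles the second commutator relation.

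For surjectivity, $\gcd(1+s, 2u) = 1$ since $1+s$ is odd for $m > 1$, so $X_0 \in \langle X_0 \Gamma \rangle$ and then $Y_0 = Y' X_0^{-s} \in \langle X_0 \Gamma, Y_0 \Gamma \rangle$; thus $\Gamma$ is a surjective endomorphism of the finite group $H_2(\alpha')$, hence an automorphism. Finally, $X_0 \Gamma = X_0 \cdot X_0^s \equiv X_0$ and $Y_0 \Gamma = Y_0 \cdot X_0^s \equiv Y_0$ modulo $Z_3(H_2(\alpha'))$ since $X_0^s \in Z_3$, so $\Gamma \in \mathrm{Aut}_3(H_2(\alpha'))$. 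The principal obstacle is the commutator verification: tracking how the non-central element $X_0^s$ interacts with $Y_0$ across two layers of the upper central series, and showing that each resulting correction is neutralized either by a torsion relation, by the divisibility $\alpha' - 1 \equiv 0 \bmod 2^m$, or by the fact that the ambient group is nilpotent of small class so that iterated commutators eventually vanish.
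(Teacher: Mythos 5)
The paper does not prove this proposition at all: it simply cites \cite[Proposition 6.8]{MS3}. So you are supplying a direct verification, and most of your plan is sound. Your identity $[X_0^{1+s},Y_0X_0^s]=C_0^{X_0^{2s}}[X_0^s,Y_0]^{X_0^s}$ is correct, as is the simplification $C_0^{X_0^{2s}}=C_0$ (the exponent picked up is a multiple of $4u\ell'$, hence of the order $2u=2^{2m-1}$ of $X_0$). The deferred bookkeeping also works out: a short collection gives $[X_0^s,Y_0]=X_0^{-s}(X_0C_0)^s=X_0^{u\ell'}C_0^s$, which lies in $\langle X_0\rangle Z(H_2(\alpha'))$ and therefore centralizes $X_0$, so the central correction to $(X')^{[X',Y']}$ is not merely small but literally trivial, and $(Y_0X_0^s)^{2u}=Y_0^{2u}X_0^{2us}[X_0^s,Y_0]^{\binom{2u}{2}}\cdots=1$ since every correction lands in $Z_2(H_2(\alpha'))$, whose exponent divides $u$. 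The surjectivity argument ($1+s$ odd) and the $\mathrm{Aut}_3$ verification ($X_0^s\in Z_3(H_2(\alpha'))$, which does follow from Proposition \ref{cenj}) are fine.

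The one step I cannot accept as written is ``the symmetric calculation handles the second commutator relation.'' There is no symmetry: $X'=X_0^{1+s}$ is a power of a single generator, while $Y'=Y_0X_0^s$ mixes the two, and the relation $(Y')^{[Y',X']}=(Y')^{\alpha'}$ is genuinely the harder of the two. Carrying it out, one finds $(Y')^{[Y',X']}=(Y_0X_0^s)^{X_0^{-u\ell'}C_0^{-1}}=Y_0^{\alpha'}X_0^{s\beta}C_0^{u\ell'}$ with $\beta\equiv\alpha'^{-1}\equiv 1-2s\ell'\pmod{2u}$, whereas a collection formula gives $(Y_0X_0^s)^{\alpha'}=Y_0^{\alpha'}X_0^{s\alpha'}w^{\binom{\alpha'}{2}}C_0^{u\ell'\binom{\alpha'}{3}}=Y_0^{\alpha'}X_0^{s\alpha'}C_0^{u}$ with $w=X_0^{u\ell'}C_0^s$; the two sides agree only because $s(\alpha'-\beta)\equiv 0\pmod{2u}$ and $u(1-\ell')\equiv 0\pmod{2u}$ (using that $\ell'$ is odd). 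So the conclusion is correct, but this is a separate computation of comparable length to the first, not a reflection of it; your proof needs it spelled out before it is complete.
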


\begin{proof} This is shown in \cite[Proposition 6.8]{MS3}.
\end{proof}

\begin{lemma}\label{autlin} Let $G$ be a group with a characteristic subgroup $N$ and let
$\mathrm{Aut}_N(G)$ be the kernel of $\mathrm{Aut}(G)\to \mathrm{Aut}(G/N)$. Given a generating subset $T$
of $G$, suppose that $f,g\in \mathrm{Aut}(G)$ satisfy $t^f\equiv t^g\mod N$ for all $t\in T$. Then $f\equiv g\mod \mathrm{Aut}_N(G)$.
\end{lemma}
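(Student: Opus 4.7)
The plan is to descend to the quotient and use the elementary fact that a homomorphism is determined by its values on a generating set. Since $N$ is characteristic, both $f$ and $g$ induce automorphisms $\bar f,\bar g$ of $G/N$, and these are precisely their images under the canonical homomorphism $\mathrm{Aut}(G)\to \mathrm{Aut}(G/N)$ whose kernel is $\mathrm{Aut}_N(G)$. Hence the desired conclusion $f\equiv g \mod \mathrm{Aut}_N(G)$ is equivalent to the equality $\bar f=\bar g$ in $\mathrm{Aut}(G/N)$.

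First I would unwind the hypothesis: for every $t\in T$, the assumption $t^f\equiv t^g \mod N$ translates directly into $\bar t^{\bar f}=\bar t^{\bar g}$ in $G/N$, where $\bar t$ denotes the image of $t$ under the projection $\pi:G\to G/N$. Next I would observe that because $T$ generates $G$ and $\pi$ is surjective, the set $\bar T=\pi(T)$ generates $G/N$. Therefore $\bar f$ and $\bar g$ are two automorphisms of $G/N$ that coincide on a generating subset; since a group homomorphism is determined by its restriction to any generating set, this forces $\bar f=\bar g$, and hence $fg^{-1}\in\mathrm{Aut}_N(G)$.

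There is really no substantive obstacle here: the statement is a routine consequence of the universal property of the quotient $G\to G/N$ combined with the characteristic property of $N$ (which is what allows $f$ and $g$ to descend at all). The only point worth being a little careful about is invoking surjectivity of $\pi$ to conclude that $\bar T$ generates $G/N$, which is immediate.
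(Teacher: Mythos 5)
Your proof is correct and follows essentially the same route as the paper: both arguments reduce to the observation that the induced automorphisms of $G/N$ agree on the generating set $\pi(T)$, hence coincide, so $fg^{-1}$ lies in the kernel $\mathrm{Aut}_N(G)$. The paper phrases this by first passing to $t^{fg^{-1}}\equiv t\bmod N$ (using that $N$ is characteristic), but the substance is identical.
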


\begin{proof} Since $N$ is a characteristic subgroup of $G$, from $t^f\equiv t^g\mod N$ for all $t\in T$, we deduce
$t^{fg^{-1}}\equiv t\mod N$ for all $t\in T$, which implies $fg^{-1}\in \mathrm{Aut}_N(G)$.
\end{proof}

Recall the automorphism $\Pi_{(x,y)}$ of $H_2(\alpha')$ defined in Proposition \ref{auth}.

\begin{theorem}\label{indexj} Set $V=\{\Pi_{(x,y)}\,|\, x,y\in \langle X_0^{2s},Y_0^{2s}\rangle\}$ and $U=V\{1,\Gamma\}$.
Then $H_2(\alpha')=J_2(\alpha')^\Lambda\, U$.
\end{theorem}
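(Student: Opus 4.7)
The strategy is to combine the structural description of $\mathrm{Aut}(H_2(\alpha'))$ from \cite{MS3} with the push-forward $\Lambda$ of automorphisms of $J_2(\alpha')$, showing that every $\phi \in \mathrm{Aut}(H_2(\alpha'))$ factors as $\beta^{\Lambda} u$ with $\beta \in \mathrm{Aut}(J_2(\alpha'))$ and $u \in U$. The plan has two stages: first dispose of the automorphisms that act nontrivially on $H_2(\alpha')/Z_3(H_2(\alpha'))$ by producing lifts, and then analyze $\mathrm{Aut}_3(H_2(\alpha'))$ and identify $V \cdot \{1, \Gamma\}$ as a transversal for the non-liftable cosets.

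Stage one is largely formal. The swap $\nu$ is the image under $\Lambda$ of the swap $\theta \in \mathrm{Aut}(J_2(\alpha'))$, and every power automorphism $X_0 \mapsto X_0^n$, $Y_0 \mapsto Y_0^q$ of $H_2(\alpha')$ (with $n$ odd and $q \equiv n^{-1} \bmod 2^{2m-1}$) is the image under $\Lambda$ of the analogous power automorphism of $J_2(\alpha')$ built in \cite{MS3}. Combined with a decomposition of the form $\mathrm{Aut}(H_2(\alpha')) = \mathrm{Aut}_3(H_2(\alpha')) \cdot \langle \nu \rangle \cdot W$ furnished by \cite{MS3}, where $W$ is the group of power automorphisms, this reduces the theorem to the inclusion $\mathrm{Aut}_3(H_2(\alpha')) \subseteq \mathrm{Aut}(J_2(\alpha'))^{\Lambda} \cdot U$.

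For stage two, $\mathrm{Aut}_3(H_2(\alpha'))$ is generated --- again by results of \cite{MS3}, together with Propositions \ref{auth} and \ref{gamma} --- by the $\Pi_{(x,y)}$ with $x, y \in Z_2(H_2(\alpha'))$ and by $\Gamma$. By Proposition \ref{autjo}, the $\Pi$-type automorphisms that lift from $J_2(\alpha')$ via $\Lambda$ are exactly those with $x, y \in \pi_{\alpha'}(Z_2(J_2(\alpha')))$; and Proposition \ref{cenj} together with $X^{2u} \in Z(J_2(\alpha')) = \ker \pi_{\alpha'}$ identifies this image as $\langle C_0^s \rangle$. Again by Proposition \ref{cenj}, $Z_2(H_2(\alpha')) = \pi_{\alpha'}(Z_3(J_2(\alpha'))) = \langle X_0^{2s}, Y_0^{2s}, C_0^s \rangle$, so $Z_2(H_2(\alpha'))/\langle C_0^s \rangle$ has coset representatives in $\langle X_0^{2s}, Y_0^{2s} \rangle$. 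Every $\Pi_{(x,y)}$ thus factors as a lifted central modification times an element of $V$, and $\Gamma \in U$ absorbs the remaining $\langle \Gamma \rangle$-coset. The main obstacle is importing from \cite{MS3} the precise generating statement for $\mathrm{Aut}_3(H_2(\alpha'))$ (in particular, that $\Gamma$ contributes only the coset $\langle \Gamma \rangle$ modulo the $\Pi_{(x,y)}$), together with the bookkeeping needed to commute the $V$ and $\Gamma$ factors past $\nu$ and $W$ so that the $U$-factor winds up on the right of the product; this relies on observing that these conjugations permute $V$ and perturb $\Gamma$ only within $\mathrm{Aut}(J_2(\alpha'))^{\Lambda} \cdot V$.
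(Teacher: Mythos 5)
Your overall architecture (split off the part of $\mathrm{Aut}(H_2(\alpha'))$ acting nontrivially on $H_2(\alpha')/Z_3$, then analyze $\mathrm{Aut}_3(H_2(\alpha'))$, and use $Z_2(H_2(\alpha'))=\langle X_0^{2s},Y_0^{2s},C_0^s\rangle$ with $\langle C_0^s\rangle=Z_1(H_2(\alpha'))$ to reduce the $\Pi_{(x,y)}$ to representatives in $V$) matches the paper. But there are two genuine gaps. The decisive one is your treatment of $\mathrm{Aut}_3(H_2(\alpha'))$: by \cite[Theorem 6.10]{MS3}, the quotient $\mathrm{Aut}_3(H)/\mathrm{Inn}(H)\mathrm{Aut}_2(H)$ is \emph{not} $\langle\overline{\Gamma}\rangle$ but the Klein four-group $\langle\overline{\Gamma},\overline{\Gamma^\nu}\rangle$, so the statement you hope to ``import from \cite{MS3}'' --- that $\Gamma$ contributes only a single $\Z/2\Z$ beyond the $\Pi$'s --- is false, and with only $\{1,\Gamma\}$ in $U$ your argument cannot cover the coset of $\Gamma^\nu$. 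The reason $U=V\{1,\Gamma\}$ nevertheless suffices is the one genuinely delicate point of the proof: one must show that $\Gamma\Gamma^\nu$ lies in $\mathrm{Aut}(J_2(\alpha'))^\Lambda$ modulo $\mathrm{Inn}(H)\mathrm{Aut}_2(H)$. The paper does this by taking the explicit $\Gamma_1\in\mathrm{Aut}_4(J_2(\alpha'))$ of \cite[Proposition 7.5]{MS3}, using the embedding $\overline{\Lambda}:\mathrm{Aut}_4(J)/\mathrm{Inn}(J)\mathrm{Aut}_3(J)\to\mathrm{Aut}_3(H)/\mathrm{Inn}(H)\mathrm{Aut}_2(H)$, and eliminating $\overline{\Gamma}$ and $\overline{\Gamma^\nu}$ (the latter by $\nu$-conjugation symmetry) as possible images of $\overline{\Gamma_1}$, forcing $\Gamma_1^\Lambda\equiv\Gamma\Gamma^\nu$. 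This idea is entirely absent from your proposal, and without it the theorem as stated does not follow.

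The second gap is in your ``formal'' stage one. The complement of $\mathrm{Aut}_3(H)$ furnished by \cite[Theorem 6.14]{MS3} is $\langle\Sigma,\nu\rangle$ with $\Sigma: X_0\mapsto X_0^{1+r}Y_0^r$, $Y_0\mapsto Y_0^{1+r}X_0^r$; this $\Sigma$ mixes the two generators and is not congruent modulo $\mathrm{Aut}_3(H)$ to any power automorphism $X_0\mapsto X_0^n$, $Y_0\mapsto Y_0^q$ (since $Y_0^r\notin Z_3(H_2(\alpha'))$), so a putative group $W$ of power automorphisms does not give the decomposition you assume. Lifting $\Sigma$ is not formal: one needs the specific $\Sigma_1\in\mathrm{Aut}(J_2(\alpha'))$ of \cite[Proposition 7.6]{MS3} together with the congruence $\Sigma_1^\Lambda\equiv\Sigma\bmod\mathrm{Aut}_3(H)$, which is verified via Lemma \ref{autlin} using $X_0^s\in Z_3(H_2(\alpha'))$. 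Your remaining points --- the factorization $\mathrm{Aut}_2(H)=\mathrm{Aut}_1(H)V$, the identifications $\mathrm{Aut}_1(H)=\mathrm{Aut}_2(J)^\Lambda$ and $\mathrm{Inn}(H)=\mathrm{Inn}(J)^\Lambda$, and the placement of $U$ on the right --- are correct and are exactly how the paper concludes, but they do not repair the two omissions above.
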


\begin{proof} Set $J=J_2(\alpha')$ and $H=H_2(\alpha')$. Note that $\theta^\Lambda=\nu$. We claim that
if $m=2$ then
\begin{equation}\label{hj0}
\mathrm{Aut}(H)=\langle\theta\rangle^\Lambda \mathrm{Aut}_3(H),
\end{equation}
while if $m>2$ then
\begin{equation}\label{hj1}
\mathrm{Aut}(H)=\langle\Sigma_1,\theta\rangle^\Lambda\mathrm{Aut}_3(H),
\end{equation}
where $\Sigma_1\in \mathrm{Aut}(J)$ is given by $X\mapsto X^{1+r}Y^r X^{s(sk+r\ell-2)}$, $Y\mapsto Y^{1+r}X^r$,
where $k$ is even if $m>3$ and odd if $m=3$, according to \cite[Proposition 7.6]{MS3}.
Indeed, if $m=2$, we have $\mathrm{Aut}(H)=\langle \nu\rangle\mathrm{Aut}_3(H)$ by \cite[Theorem 6.14]{MS3}, and the claim follows
in this case. Suppose  $m>2$. Then \cite[Theorem 6.14]{MS3} ensures that
$\mathrm{Aut}(H)=\langle \Sigma,\nu\rangle\mathrm{Aut}_3(H)$, where
$\Sigma$ is defined in \cite[Corollary 6.12]{MS3} by $X_0\mapsto X_0^{1+r}Y_0^r$, $Y_0\mapsto Y_0^{1+r}X_0^r$.
Since $X_0^s\in Z_3(H)$ by Proposition \ref{cenj}, it follows from Lemma \ref{autlin} that
$\Sigma_1^\Lambda\equiv\Sigma\mod \mathrm{Aut}_3(H)$, which proves the claim in this case.

Let $\Gamma\in \mathrm{Aut}_3(H)$ be as defined in Proposition \ref{gamma},
and let $\Gamma_1\in \mathrm{Aut}_4(J)$ be as defined in \cite[Proposition 7.5]{MS3} by
$X\mapsto X^{1+s}Y^s$, $Y\mapsto Y^{1+s(2\delta_{2,m}+s-3)} X^s$, where $\delta_{i,j}$
is the Kronecker delta function. By Theorem \cite[Theorem 6.10]{MS3}, we have
$$
\langle \overline{\Gamma}, \overline{\Gamma^\nu}\rangle=\mathrm{Aut}_3(H)/\mathrm{Inn}(H)\mathrm{Aut}_2(H)\cong \Z/2\Z\times \Z/2\Z,
$$
which implies
$$
\mathrm{Aut}_3(H)=\mathrm{Inn}(H)\mathrm{Aut}_2(H)\langle \Gamma, \Gamma^\nu\rangle=
\mathrm{Inn}(H)\mathrm{Aut}_2(H)\{1, \Gamma\Gamma^\nu\}\{1, \Gamma\}.
$$
Set $Z=[X,Y]$. Then $X^Y=XZ$ and $Y^X=YZ^{-1}$, with $Z\in Z_4(J)$ by Proposition \ref{cenj}. Thus,
$\mathrm{Inn}(J)\subset\mathrm{Aut}_4(J)$ and $\mathrm{Inn}(H)\subset\mathrm{Aut}_3(H)$, so,
by \cite[Proposition 6.9]{MS3}, $\Lambda$ induces an imbedding
$$
\overline{\Lambda}: \mathrm{Aut}_4(J)/\mathrm{Inn}(J)\mathrm{Aut}_3(J)\to \mathrm{Aut}_3(H)/\mathrm{Inn}(H)\mathrm{Aut}_2(H).
$$
In this regard, the proof of \cite[Theorem 7.7]{MS3} shows that
$\overline{\mathrm{Aut}_4(J)}=\langle \overline{\Gamma_1}\rangle$ has order 2 and that $\overline{\Gamma_1}^{\overline{\Lambda}}\neq
\overline{\Gamma}$, i.e., $\Gamma_1^\Lambda\not\equiv \Gamma\mod \mathrm{Inn}(H)\mathrm{Aut}_2(H)$. But then
$(\Gamma_1^\theta)^\Lambda\equiv(\Gamma_1^\Lambda)^\nu\not\equiv \Gamma^\nu\mod \mathrm{Inn}(H)\mathrm{Aut}_2(H)$, i.e.,
$\overline{\Gamma_1}^{\overline{\Lambda}}\neq \overline{\Gamma^\nu}$. This forces
$\overline{\Gamma_1}^{\overline{\Lambda}}=\overline{\Gamma\Gamma^\nu}$, i.e.,
$\Gamma_1^\Lambda\equiv \Gamma\Gamma^\nu\mod \mathrm{Inn}(H)\mathrm{Aut}_2(H)$. Therefore,
\begin{equation}\label{hj2}
\mathrm{Aut}_3(H)=\{1,\Gamma_1\}^\Lambda \mathrm{Inn}(H)\mathrm{Aut}_2(H)\{1, \Gamma\}.
\end{equation}

It follows from Proposition \ref{cenj} that $Z_2(H)=\langle X_0^{2s}, Y_0^{2s}, [X_0,Y_0]^s\rangle$ and
$Z_1(H)=\langle [X_0,Y_0]^s\rangle$. We deduce from \cite[Proposition 6.6]{MS3} that
\begin{equation}\label{hj3}
\mathrm{Aut}_2(H)=\mathrm{Aut}_1(H)V.
\end{equation}
Here
\begin{equation}\label{hj4}
\mathrm{Aut}_1(H)=\mathrm{Aut}_2(J)^\Lambda
\end{equation}
by \cite[Proposition 7.2]{MS3}. On other hand, it is clear that
\begin{equation}\label{hj5}
\mathrm{Inn}(H)=\mathrm{Inn}(J)^\Lambda.
\end{equation}
Combining (\ref{hj0})-(\ref{hj5}) proves that $\mathrm{Aut}(H)=\mathrm{Aut}(J)^\Lambda\, U$.
\end{proof}

The fundamental relations $A^{2u}B^{2u}=1$ and
$A^{2us}=B^{2us}=C^{2u}$, shown in \cite[Section 6]{MS}, will be used without comment,
as well as $Z_1(J_2(\alpha))=\langle A^{2u}\rangle$, as indicated in Proposition \ref{cenj}.

\begin{theorem}\label{necj2} Suppose that $\alpha'\equiv\alpha\mod 2^{2m-1}$ but
$\alpha'\not\equiv\alpha\mod 2^{2m}$. Then $J_2(\alpha')\cong J_2(\alpha)$ if and only if $m=2$.
In particular, $J_2(1+4\ell)\cong J(5)$.
\end{theorem}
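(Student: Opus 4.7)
The plan is to apply the framework of Section \ref{si} with $G_1 = J_2(\alpha')$, $G_2 = J_2(\alpha)$, and $N_1, N_2$ the respective centers, so that $L_i = H_2$. The hypothesis of Proposition \ref{tool2} is satisfied by Proposition \ref{autjo}. Since $\alpha' \equiv \alpha \pmod{2^{2m-1}}$ implies $\alpha' \equiv \alpha \pmod{2^{2m-2}}$, Theorem E provides an explicit isomorphism $\gamma: H_2(\alpha') \to H_2(\alpha)$, which by analogy with Section \ref{caso1} may be taken of the form $X_0 \mapsto A_0,\ Y_0 \mapsto B_0^g$ for an odd $g$ with $\alpha^g \equiv \alpha' \pmod{2^{2m-1}}$. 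By Theorem \ref{indexj}, $\mathrm{Aut}(H_2(\alpha')) = \mathrm{Aut}(J_2(\alpha'))^{\Lambda}\, U$ with $U = V\{1, \Gamma\}$, so $J_2(\alpha') \cong J_2(\alpha)$ if and only if there is some $u \in U$ for which the assignment $X \mapsto h_{X, u},\ Y \mapsto h_{Y, u}$ extends to a homomorphism, where $h_{X, u}, h_{Y, u} \in J_2(\alpha)$ are lifts of $X\pi_{\alpha'}u\gamma$ and $Y\pi_{\alpha'}u\gamma$.

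For the necessity direction ($m > 2$), I would run through the two classes of candidates $u = \Pi_{(x, y)}$ and $u = \Pi_{(x, y)}\Gamma$ with $x, y \in \langle X_0^{2s}, Y_0^{2s}\rangle$, computing $h_{X, u}$ and $h_{Y, u}$ explicitly as products of powers of $A, B$ and $C = [A, B]$. The relation $X^{[X, Y]} = X^{\alpha'}$ of $J_2(\alpha')$ must be preserved, forcing $h_{X, u}^{[h_{X, u}, h_{Y, u}]} = h_{X, u}^{\alpha'}$ inside $J_2(\alpha)$. Expanding both sides using $A^C = A^\alpha$, $B^{C^{-1}} = B^\alpha$ together with the fundamental relations $A^{2u}B^{2u} = 1$ and $A^{2us} = B^{2us} = C^{2u}$, and reducing modulo the center $Z(J_2(\alpha)) = \langle A^{2u}\rangle$ of order $2^m$, produces a congruence on the exponents. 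The expected outcome, parallel to the analysis of Theorem \ref{exel}, is that for $m > 2$ this congruence forces $\alpha' \equiv \alpha \pmod{2^{2m}}$, contradicting the hypothesis. For the sufficiency direction ($m = 2$), the same case analysis must yield a $u$ (necessarily involving $\Gamma$) for which the relation is preserved. The key asymmetry between $m = 2$ and $m > 2$ is the Kronecker delta $\delta_{2, m}$ in the definition of the automorphism $\Gamma_1$ of $J_2$ (cf.\ the proof of Theorem \ref{indexj}): for $m = 2$ this term contributes an extra factor of order $2^{2m-1}$ in $\Gamma_1(Y)$, precisely what is needed to absorb the discrepancy $\alpha' - \alpha \equiv 2^{2m-1} \pmod{2^{2m}}$. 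Specializing to $\alpha = 5$ and $\alpha' = 1 + 4\ell$ then yields $J_2(1 + 4\ell) \cong J_2(5)$.

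The main obstacle will be the commutator bookkeeping in $J_2(\alpha)$ modulo its center. Iterated identities of the type $(A^i)^{B^j} \equiv A^{i + (\alpha - 1)ij(i + 1)/2}\, B^{-(\alpha - 1)ij(j - 1)/2}\, C^{ij} \pmod{Z}$ involve arithmetic sums $1 + 2 + \cdots + (j - 1)$ that must be reduced using $\alpha \equiv 1 \pmod{2^m}$, as in the proof of Theorem \ref{suf}(b). The twists introduced by elements of $U$, and especially by $\Gamma$, perturb these sums in ways that have to be disentangled case by case; pinpointing exactly how $\delta_{2, m}$ breaks the obstruction precisely when $m = 2$ is where the substantive content of the proof lies.
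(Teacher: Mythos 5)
Your setup coincides with the paper's: Proposition \ref{tool2} applied with $E=\{X,Y\}$, the transversal $U=V\{1,\Gamma\}$ from Theorem \ref{indexj}, Proposition \ref{autjo} to absorb central factors, and a base isomorphism $\gamma:H_2(\alpha')\to H_2(\alpha)$. (For the latter you can simply take $X_0\mapsto A_0$, $Y_0\mapsto B_0$, i.e.\ $g=1$: the hypothesis $\alpha'\equiv\alpha\bmod 2^{2m-1}$ makes the two presentations of $H_2$ literally identical. Invoking Theorem E here is both unnecessary and dangerous, since for $m\le 2$ Theorem E is deduced from Theorem B, which contains the statement you are proving.) This reduces the problem, exactly as in the paper, to deciding whether $X\mapsto A^{1+s+2si}B^{2sj}$, $Y\mapsto A^sBA^{2sa}B^{2sb}$ preserves the two defining relations for some $i,j,a,b$.

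The gap is that everything after that reduction --- which is where the theorem actually lives --- is deferred, and the mechanism you conjecture for the $m=2$ versus $m>2$ dichotomy is not the right one. The Kronecker delta $\delta_{2,m}$ in $\Gamma_1$ plays no role here: $\Gamma_1$ occurs only inside the proof of Theorem \ref{indexj}, to identify the transversal $U$, and never enters the relation-checking; only $\Gamma$ and the $\Pi_{(x,y)}$ do. What actually discriminates the two cases is an elementary parity obstruction at the very end of the computation. Writing $\ell'=\ell+qs$ with $q$ odd (this encodes $\alpha'\equiv\alpha\bmod 2^{2m-1}$, $\alpha'\not\equiv\alpha\bmod 2^{2m}$), the two preserved relations translate into two congruences modulo $s=2^{m-1}$ in the unknowns $i,j,a,b$, namely $\ell(i+b+2j)\equiv -\ell+r+(q+\ell)/2$ and $\ell(i+b+2a)\equiv (q-3\ell)/2$, whose difference is $2\ell(j-a)\equiv r+\ell\bmod s$ with $r=2^{m-2}$. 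The left side is always even, while $r+\ell$ is odd precisely when $m>2$; hence no solution exists for $m>2$, and for $m=2$ one solves for $j-a$ and the two congruences collapse into one solvable condition on $i,b$. Your framing ``the congruence forces $\alpha'\equiv\alpha\bmod 2^{2m}$'' also does not match this: the obstruction is a fixed parity condition modulo $s$, not a statement recovering the higher congruence class of $\alpha'-\alpha$. Until the exponent bookkeeping producing these two congruences is actually carried out (it requires the normal-form and commutator formulas of \cite{MS3}, not just the $(A^i)^{B^j}$ identity you quote), the proof is incomplete.
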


\begin{proof} As $\alpha'\equiv\alpha\mod 2^{2m-1}$, it is clear that the assignment $X_0\mapsto A_0$ and $Y_0\mapsto B_0$
extends to an isomorphism $\gamma: H_2(\alpha')\to H_2(\alpha)$. We next apply Proposition~\ref{tool2}
with $G_1=J_2(\alpha')$, $G_2=J_2(\alpha)$, $E=\{X,Y\}$, and $U$ as in Theorem \ref{indexj}.
This is a valid application thanks to Proposition \ref{autjo}. By Proposition \ref{cenj},
$A_0^s\in Z_3(H_2(\alpha))$, and $Z_2(H_2(\alpha))=\langle  A_0^{2s},  B_0^{2s}, C_0^{s}\rangle$ is abelian
with $C_0^{s}\in Z(H_2(\alpha))$. It follows that for $x,y\in\langle A_0^{2s},B_0^{2s}\rangle$, we have
$$
A_0\Pi_{(x,y)}\Gamma\equiv A_0^{1+s+2si}B_0^{2sj}\mod Z(H_2(\alpha)),\;
B_0\Pi_{(x,y)}\Gamma\equiv A_0^s B A_0^{2sa}B_0^{2sb}\mod Z(H_2(\alpha))
$$
for some $i,j,a,b\in\Z$. Thus, for $u=\Pi_{(x,y)}\Gamma$, we can take $h_{X,u}=A^{1+s+2si}B^{2sj}z_1$
and $h_{Y,u}=A^s B A^{2sa}B^{2sb}z_2$, where $z_1,z_2\in Z_2(J_2(\alpha))$. By Proposition \ref{tool2},
$J_2(\alpha')\cong J_2(\alpha)$ if and only
the assignment
$$
X\mapsto A^{1+s+2si}B^{2sj}z_1,\;Y\mapsto A^s B A^{2sa}B^{2sb}z_2
$$
extends to an isomorphism $J_2(\alpha')\to J_2(\alpha)$ for some choice of $i,j,a,b$ and $z_1,z_2\in Z_2(J_2(\alpha))$.
In view of Proposition \ref{autjo} and the fact that any isomorphism $J_2(\alpha')\to J_2(\alpha)$ restricts to an isomorphism
between their second centers, we deduce that
$J_2(\alpha')\cong J_2(\alpha)$ if and only
the assignment
\begin{equation}
\label{as}
X\mapsto A^{1+s+2si}B^{2sj},\;Y\mapsto A^s B A^{2sa}B^{2sb}
\end{equation}
extends to an isomorphism $J_2(\alpha')\to J_2(\alpha)$ for some choice of $i,j,a,b$.

Suppose that (\ref{as}) extends to a homomorphism, say $\Omega$, for some $i,j,a,b$.
 According to Proposition \ref{cenj}, the centers of $J_2(\alpha^\prime)$ and $J_2(\alpha)$ are generated by
$X^{2u}$ and $A^{2u}$, respectively. Since $\Omega$ maps $X^{2u}$ into $A^{(1+s)2u}$, with $1+s$ odd, we see that $\Omega$
restricts an isomorphism between the centers of $J_2(\alpha^\prime)$ and $J_2(\alpha)$. As these are nilpotent groups,
it follows that $\Omega$ is injective.
But $J_2(\alpha^\prime)$ and $J_2(\alpha)$ have order $2^{7m-3}$, so $\Omega$ is an isomorphism.

Now (\ref{as}) extends to a homomorphism $J_2(\alpha')\to J_2(\alpha)$ if and only if there exist $i,j,a,b\in\Z$ such that
the following relations hold in $J_2(\alpha)$:
\begin{equation}
\label{poop}
(A^{1+s+2si}B^{2sj})^{[A^{1+s+2si}B^{2sj},A^s BA^{2sa}B^{2sb}]}=(A^{(1+s+2si)}B^{2sj})^{\alpha'},
\end{equation}
\begin{equation}
\label{poop2}
(A^s B A^{2sa}B^{2sb})^{[A^s BA^{2sa}B^{2sb},A^{1+s+2si}B^{2sj}]}=(A^sBA^{2sa}B^{2sb})^{\alpha'}.
\end{equation}

We proceed to compute the left and right hand sides of (\ref{poop}) and (\ref{poop2}), and set $C=[A,B]$ for this purpose.

Applying \cite[Theorem 3.2]{MS3} to $(A^0BC^0)(A^{2sa}B^{2sb}C^0) = A^{expA + \xi} B^{expB} C^{expC}$ gives
    $expA\equiv 2sa-2u\ell a\mod 4us$,
    $expB\equiv 1+2sb\mod 4us$,
    $expC\equiv -2sa+2ua\mod 4u$, and
    $\xi\equiv 2usa\mod 4us$. Thus
		\begin{equation}
\label{poopicat}
		A^s BA^{2sa}B^{2sb}=A^{s+2sa}B^{1+2sb} C^{-2sa+2ua} A^{2usa-2u\ell a}=A^{s+2sa}B^{1+2sb} C^{-2sa}A^{-2u\ell a}.
		\end{equation}
		Therefore
		$$
		A^s BA^{2sa}B^{2sb}\equiv A^{s+2sa}B^{2sb} C^{-2sa}\mod Z(J_2(\alpha)),
		$$
	  \[
    [A^{1+s+2si}B^{2sj},A^s BA^{2sa}B^{2sb}]
    = [A^{1+s+2si}B^{2sj},A^{s+2sa}B^{2sb}C^{-2sa}].
    \]

    Applying \cite[Proposition 3.4]{MS3} to $[A^{1+s+2si}B^{2sj},A^{s+2sa}BC^{-2sa}] = A^{expA} B^{expB} C^{expC}$, we obtain
    $expA\equiv u\mod 2u$,
    $expB\equiv 0\mod 2u$,
    $expC\equiv 1 + s + 2s(i+b)\mod 2u$, so
    \begin{equation}
    \label{corch}
    [A^{1+s+2si}B^{2sj},A^{s+2sa}B^{2sb}C^{-2sa}]\equiv A^{u} C^{1 + s + 2s(i+b)}\mod Z(J_2(\alpha)).
    \end{equation}
    By Proposition \ref{cenj}, $Z_3(J_2(\alpha))=\langle A^{2s},  B^{2s}, C^{2^{s}}\rangle$ is abelian, so
		$$
		(A^{1+s+2si}B^{2sj})^{[A^{1+s+2si}B^{2sj},A^s BA^{2sa}B^{2sb}]}=
		(A^{1+s+2si}B^{2sj})^{A^{u} C^{1 + s + 2s(i+b)}}=(A^{1+s+2si}B^{2sj})^{C^{1 + s + 2s(i+b)}}.
		$$
		By means of \cite[Theorem 3.1]{MS3}, we may now derive
		\begin{equation}
    \label{poopl}
(A^{1+s+2si}B^{2sj})^{[A^{1+s+2si}B^{2sj},A^s BA^{2sa}B^{2sb}]}
    = A^{\alpha + s + 2si  + 4u\ell(2i + b+j+ 1)}.
\end{equation}

 On the other hand, applying \cite[Theorem 3.5]{MS3} to $(A^{1+s+2si}B^{2sj})^{\alpha^\prime} = A^{expA + \xi} B^{expB} C^{expC}$ gives
    $expA\equiv 1 + 2s\ell' + s + 2si + 2u\ell' + 4u\ell' i\mod 4us$,
    $expB\equiv 2sj+4u\ell'j+2usj\mod 4us$,
    $expC\equiv 2sj\mod 4u$,
    $\xi\equiv 0\mod 4us$, so
   \begin{equation}
    \label{poopr}
(A^{1+s+2si}B^{2sj})^{\alpha^\prime}
    = A^{\alpha' + s + 2si + 2u\ell' + 4u\ell' i}B^{2sj+4u\ell'j+2usj}C^{2sj}=
		A^{\alpha' + s + 2si + 2u\ell' + 4u\ell'(i-j)}B^{2sj}.
   \end{equation}

	Thus (\ref{poop}) holds if and only if (\ref{poopl}) equals (\ref{poopr}). The order $A$ is $2^{3m-1}$ by \cite[Theorem 7.2]{MS}.
	Thus (\ref{poop}) is true if and only if
		\begin{equation}
    \label{poop5}
		2s\ell+4u\ell(2i+b+j+1)\equiv 2s\ell'+2u\ell'+4u\ell'(i-j)\mod 4us.
		\end{equation}
		By  hypothesis $\ell'=\ell+q s$ with $q$ odd, so $q+\ell$ is even. Hence, setting $r=s/2$, (\ref{poop5}) becomes
		$$
		4u\ell(i+b+j+1)\equiv 4u(r+(q+\ell)/2) \mod 4us,
		$$
		that is,
		\begin{equation}
    \label{poop6}
\ell(i+b+2j)\equiv -\ell+ r+(q+\ell)/2\mod s.
\end{equation}

		As for (\ref{poop2}), by \cite[Theorem 3.1 and Corollary 3.3]{MS3} and (\ref{corch}), we have
		$$
		[A^s BA^{2sa}B^{2sb},A^{1+s+2si}B^{2sj}]\equiv [A^{1+s+2si}B^{2sj},A^s BA^{2sa}B^{2sb}]^{-1}\equiv
		A^{-u} C^{-1 -s - 2s(i+b)}\mod Z(J_2(\alpha)).
		$$
Thus
$$
(A^s BA^{2sa}B^{2sb})^{[A^s BA^{2sa}B^{2sb},A^{1+s+2si}B^{2sj}]}=(A^s BA^{2sa}B^{2sa})^{A^{-u} C^{-1 -s - 2s(i+b)}}.
$$
Appealing to \cite[Theorem 3.1]{MS3}, we obtain
\begin{equation}
    \label{poop2l}
(A^s BA^{2sa}B^{2sb})^{[A^s BA^{2sa}B^{2sb},A^{1+s+2si}B^{2sj}]}=A^{s+2sa} B^{1+2s\ell+2sb}C^{-2sa+u}
A^{-4u\ell-6u\ell a-4u\ell(i+2b)+us\ell}.
\end{equation}

Due to (\ref{poopicat}), $A^s B A^{2sa}B^{2sb}=A^{s+2sa}B^{1+2sb} C^{-2sa}A^{-2u\ell a}$, where $A^{-2u\ell a}$ is central and we have
$(A^{-2u\ell a})^{1+2s\ell'}=A^{-2u\ell a}$. Applying \cite[Theorem 3.5]{MS3} to
$$(A^{s+2sa}B^{1+2sb} C^{-2sa})^{1+2s\ell'}= A^{expA + \xi} B^{expB} C^{expC}$$
gives
$$expA\equiv s + 2sa+2u\ell'+4u\ell' a+u^2-us\ell'\ell-2usa\mod 4us,$$
 $expB\equiv 1+2s\ell'+2sb+4u\ell'b+2us\mod 4us$,
    $expC\equiv -2sa-u\ell'-2u\ell' a\mod 4u$,
    $\xi\equiv u^2\mod 4us$, so
	\begin{equation}
    \label{poop2r}
(A^s B A^{2sa}B^{2sb})^{1+2s\ell'}=
A^{s + 2sa}	B^{1+2s\ell'+2sb}C^{-2sa-u\ell'}A^{2u(\ell'-s)+4u\ell'(a-b)-us\ell'\ell-2u\ell a}.
\end{equation}

Thus (\ref{poop2}) holds if and only if (\ref{poop2l}) equals (\ref{poop2r}).
Here $B^{2s\ell'}=B^{2s\ell}B^{2uq}=B^{2s\ell}A^{-2uq}$, with $A^{2u}$ central.  As $B^{2sb}\in Z_3(J_2(\alpha))$,
which is abelian, we see that (\ref{poop2}) holds if and only if
$$
C^u A^{-4u\ell-6u\ell a-4u\ell(i+2b)+us\ell}=C^{-u\ell'}A^{-2uq+2u(\ell'-s)+4u\ell'(a-b)-us\ell'\ell-2u\ell a}.
$$
Here $C^{u(1+\ell')}=A^{us(1+\ell')}$. As $A$ has order $4us$, the above condition becomes
$$
s(1+\ell')-4\ell-6\ell a-4\ell(i+2b)+s\ell\equiv -2q+2(\ell'-s)+4\ell'(a-b)-s\ell'\ell-2\ell a\mod 4s,
$$
which simplifies to
\begin{equation}
    \label{poop8}
\ell(i+b+2a)\equiv (q-3\ell)/2\mod s,
\end{equation}
where $q-3\ell$ is even. Subtracting (\ref{poop8}) from (\ref{poop6}), we obtain
\begin{equation}
    \label{poop9}
2\ell(j-a)\equiv r+\ell\mod s.
\end{equation}
This has a solution if and only if $m=2$.  If $m=2$, once $j$ and $a$ are chosen so that (\ref{poop9}) holds,
the equations (\ref{poop6}) and (\ref{poop8}) become identical and they can be solved for $i$ and $b$.
\end{proof}

\section{Necessity for $H_2(\alpha)$}\label{necfh2}

We assume in this section that $p=2$ and $m>2$. We set $s=2^{m-1}$, $u=s^2$, and $r=s/2$.

Note that $\alpha$ and $\alpha'$ have the same order modulo $2^{2m-1}$, namely $2^{m-1}$. It is
well-known that $[\Z/2^{2m-1}\Z]^\times$ is the internal direct product of the cyclic subgroups generated by
the classes of 5 and $-1$. Since $\alpha$ and $\alpha'$ are congruent to 1 modulo 4, their classes must belong
to the subgroup generated by the class of 5. In a finite cyclic group there is at most one subgroup of any given order,
so $\alpha$ and $\alpha'$ generate the same subgroup modulo $2^{2m-1}$, which means that there is an odd integer $t$
such that $\alpha'\equiv\alpha^t\mod 2^{2m-1}$. We may take $t$ to be positive and we do so.

By \cite[Section 6]{MS3},
$
H_2(\alpha)=\langle A,B\,|\, A^{[A,B]}=A^{\alpha},\, B^{[B,A]}=B^{\alpha},  A^{2^{2m-1}}=1=B^{2^{2m-1}}\rangle
$
and
$
H_2(\alpha')=\langle X,Y\,|\, X^{[X,Y]}=X^{\alpha'},\, Y^{[Y,X]}=Y^{\alpha'},  X^{2^{2m-1}}=1=Y^{2^{2m-1}}\rangle.
$
Moreover, by \cite[Section 5]{MS3}, we have
$
K_2(\alpha)=\langle a,b\,|\, a^{[a,b]}=a^\alpha,\, b^{[b,a]}=b^\alpha, a^{2^{2m-1}}=1=b^{2^{2m-1}}, [a,b]^{2^{m-1}}=1\rangle
$
and
$
K_2(\alpha')=\langle x,y\,|\, x^{[x,y]}=x^{\alpha'},\, y^{[y,x]}=y^{\alpha'},  x^{2^{2m-1}}=1=y^{2^{2m-1}},[x,y]^{2^{m-1}}=1\rangle.
$
There are automorphisms, say $\nu$ and $\mu$, of $H_2(\alpha')$ and $K_2(\alpha')$, given by $X\leftrightarrow Y$
and $x\leftrightarrow y$.

Consider the projection homomorphisms $\pi_\alpha:H_2(\alpha)\to K_2(\alpha)$, given by $A\mapsto a$ and $B\mapsto b$,
and $\pi_{\alpha'}:H_2(\alpha')\to K_2(\alpha')$, given by $X\mapsto x$ and $Y\mapsto y$.

We have the natural homomorphism $\Lambda:\mathrm{Aut}(H_2(\alpha'))\to \mathrm{Aut}(K_2(\alpha'))$ such that
$\pi_{\alpha'} \beta^\Lambda= \beta\pi_{\alpha'}$
for all $\beta\in \mathrm{Aut}(H_2(\alpha'))$.

\begin{prop}\label{autexcel}
    For any $u,v\in Z_2(K_2(\alpha'))$, the assignment $x\mapsto xu,\; y\mapsto yv$ extends to an automorphism
		 of $K_2(\alpha')$. In particular, the assignments
		$$
		x\mapsto x^{1+s},y\mapsto y;\;
    x\mapsto x, y\mapsto y^{1+s};\; x\mapsto xy^{s},y\mapsto y;\; x\mapsto x,y\mapsto yx^s;\;
    $$
		extend to automorphisms $\psi_1,\psi_2,\psi_3$, and $\psi_4$ of $K_2(\alpha')$.
\end{prop}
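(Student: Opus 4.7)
The plan is to invoke von Dyck's theorem to obtain an endomorphism $\phi$ of $K_2(\alpha')$ sending $x\mapsto xu$ and $y\mapsto yv$, and then promote $\phi$ to an automorphism by a Frattini-subgroup argument. Once the general statement is proved, the four particular assignments fall out by taking $(u,v)\in\{(x^s,1),(1,y^s),(y^s,1),(1,x^s)\}$; all are valid choices because Proposition \ref{cenj}, applied to $K_2(\alpha')=J_2(\alpha')/Z_2(J_2(\alpha'))$, identifies $Z_2(K_2(\alpha'))=\langle x^s,y^s,[x,y]\rangle$ and $Z_1(K_2(\alpha'))=\langle x^{2s},y^{2s}\rangle$, of exponents $2^m$ and $s$ respectively.

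To apply von Dyck I would check that the five defining relations of $K_2(\alpha')$ are preserved by the substitution $x\mapsto xu$, $y\mapsto yv$, namely $(xu)^{[xu,yv]}=(xu)^{\alpha'}$, its $y$-companion, $(xu)^{2^{2m-1}}=1=(yv)^{2^{2m-1}}$, and $[xu,yv]^s=1$. The decisive structural fact is that $u,v\in Z_2(K_2(\alpha'))$ forces every commutator $[u,g]$, $[v,g]$, and $[u,v]$ to lie in the centre $Z_1$. Using $[ab,c]=[a,c]^b[b,c]$ and $[a,bc]=[a,c][a,b]^c$, a short calculation yields $[xu,yv]=[x,y]\,z$ with $z\in Z_1$, so conjugation by $[xu,yv]$ agrees with conjugation by $[x,y]$. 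Because $[x,u]\in Z_1$, the subgroup $\langle x,u,Z_1\rangle$ has nilpotency class at most two, and in it the class-two power formula $(xu)^n=x^nu^n[u,x]^{n(n-1)/2}$ holds; combined with $u^{[x,y]}=u\cdot[u,[x,y]]$, the Macdonald relation $x^{[x,y]}=x^{\alpha'}$ reduces the first defining relation to the $Z_1$-identity
\[
[u,[x,y]]=u^{\alpha'-1}\,[u,x]^{\binom{\alpha'}{2}}.
\]
Writing $u=x^{sa}y^{sb}[x,y]^c$, using $\alpha'-1=2^m\ell'$, and exploiting that $Z_1$ has exponent $s$, this becomes a finite congruence that can be checked generator by generator. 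The order relations $(xu)^{2^{2m-1}}=1$ and $[xu,yv]^s=1$ follow from the same class-two power identity together with the exponent estimates for $Z_1$ and $Z_2(K_2(\alpha'))$.

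With $\phi$ an endomorphism in hand, $\phi$ reduces to the identity on $K_2(\alpha')/Z_2(K_2(\alpha'))$, hence $\phi(K_2(\alpha'))\cdot Z_2(K_2(\alpha'))=K_2(\alpha')$. Since $m>2$, each of $x^s=(x^2)^{s/2}$ and $y^s=(y^2)^{s/2}$ lies in $K_2(\alpha')^2$, and $[x,y]$ lies in the derived subgroup, so $Z_2(K_2(\alpha'))$ is contained in the Frattini subgroup of $K_2(\alpha')$. The Frattini argument then forces $\phi(K_2(\alpha'))=K_2(\alpha')$, and finiteness promotes surjectivity to bijectivity. The principal obstacle is the final commutator identity in $Z_1$: although elementary, it depends on the precise structure of the upper central series of $K_2(\alpha')$ and on arithmetic consequences of $\alpha'\equiv 1\pmod{2^m}$, so each generator of $Z_2(K_2(\alpha'))$ must be handled by direct computation rather than by a uniform argument.
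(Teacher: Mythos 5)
Your proposal is correct, but it takes a genuinely different route from the paper: the paper does not prove this proposition at all, it simply cites \cite[Proposition 5.9]{MS3}, whereas you give a self-contained argument (von Dyck on the five defining relations of $K_2(\alpha')$, followed by a Frattini-subgroup argument for surjectivity). Your structural inputs check out: from Proposition \ref{cenj} and the identification $Z_i(J/Z_2(J))=Z_{i+2}(J)/Z_2(J)$ one does get $Z_2(K_2(\alpha'))=\langle x^s,y^s,[x,y]\rangle$ of exponent $2s$ and $Z_1(K_2(\alpha'))=\langle x^{2s},y^{2s}\rangle$ of exponent $s$, and the Frattini step is clean since $x^s,y^s\in K_2(\alpha')^2$ and $[x,y]\in[K_2(\alpha'),K_2(\alpha')]$. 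The one soft spot is that you assert rather than verify the ``decisive'' congruence $[u,[x,y]]=u^{\alpha'-1}[u,x]^{\binom{\alpha'}{2}}$, and you predict it needs a generator-by-generator case check; in fact it closes uniformly because both sides are trivial. Indeed $u^{\alpha'-1}=u^{2s\ell'}=1$ since $Z_2(K_2(\alpha'))$ has exponent $2s$; $[u,x]^{\binom{\alpha'}{2}}=1$ since $\binom{\alpha'}{2}=\alpha's\ell'$ is divisible by $s$ and $Z_1(K_2(\alpha'))$ has exponent $s$; and $[u,[x,y]]=1$ because $(x^s)^{[x,y]}=x^{s\alpha'}=x^{s+2s^2\ell'}=x^s$ and $(y^s)^{[x,y]}=y^s$ in $K_2(\alpha')$ (as $x^{2s^2}=x^{2^{2m-1}}=1$ there), so $[x,y]$ centralizes all three generators of $Z_2(K_2(\alpha'))$, and $u\mapsto[u,[x,y]]$ is multiplicative because its values are central. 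With that observation supplied, your argument is a complete and arguably more transparent proof than an external citation; what it costs is the page of explicit central-series bookkeeping that \cite{MS3} already packages.
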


\begin{proof} This is shown in \cite[Proposition 5.9]{MS3}.
\end{proof}

\begin{prop}\label{newauto}
    The assignment $x\mapsto xy^r,\; y\mapsto y$ extends to an automorphism $\Phi$ of~$K_2(\alpha')$.
\end{prop}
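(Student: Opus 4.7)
The plan is to verify that the assignment $x\mapsto xy^r$, $y\mapsto y$ preserves the five defining relations of $K_2(\alpha')$ and is surjective; since $K_2(\alpha')$ is finite, this suffices to exhibit $\Phi$ as an automorphism. Surjectivity is trivial: $y\in\mathrm{Im}\,\Phi$ forces $x=(xy^r)y^{-r}\in\mathrm{Im}\,\Phi$.

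The central calculation is that of $w:=[xy^r,y]$. Writing $z:=[x,y]$, direct expansion gives $w=y^{-r}zy^r$. The relation $y^{[y,x]}=y^{\alpha'}$ yields $zyz^{-1}=y^{\alpha'}$, equivalently $y^{-1}zy=y^{\alpha'-1}z$, and a straightforward induction produces $y^{-k}zy^k=y^{k(\alpha'-1)}z$ for all $k\geq 0$. Setting $k=r$ and using $\alpha'-1=2^m\ell'$ gives $w=y^{u\ell'}z$. Now $y^u=(y^{2s})^r$ belongs to $\langle y^{2s}\rangle\subseteq Z_1(K_2(\alpha'))$ by (the $K_2$-version of) Proposition \ref{cenj}, and $y^{2u}=1$, so $y^u$ has order dividing $2$; since $\ell'$ is odd, $y^{u\ell'}=y^u$, and thus $w=y^u z$ with $y^u$ central.

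Three of the five relations under $\Phi$ are then immediate: $y^{2^{2m-1}}=1$ is automatic; $[xy^r,y]^{2^{m-1}}=(y^uz)^{2^{m-1}}=y^{u\cdot 2^{m-1}}z^{2^{m-1}}=1$, using centrality of $y^u$, the fact that its order divides $2$ (killed by $2^{m-1}$ since $m>2$), and $z^{2^{m-1}}=1$; and $y^{[y,xy^r]}=wyw^{-1}=y^u(zyz^{-1})y^{-u}=y^u y^{\alpha'} y^{-u}=y^{\alpha'}$, again by centrality of $y^u$.

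The two remaining relations, namely $(xy^r)^{[xy^r,y]}=(xy^r)^{\alpha'}$ and $(xy^r)^{2^{2m-1}}=1$, constitute the main obstacle. For the first, centrality of $y^u$ reduces the left-hand side to $z^{-1}(xy^r)z=x^{\alpha'}y^{r\alpha'^{-1}}$, where $\alpha'^{-1}$ is taken modulo the order of $y$, and then $(xy^r)^{\alpha'}$ must be expanded via the commutator-power formulas from \cite[Section 5]{MS3} and matched modulo $y^{2^{2m-1}}=1$. For the second, $K_2(\alpha')$ has nilpotency class $3$ (inherited from class $5$ of $J_2(\alpha')$ by quotienting out $Z_2$), so a Hall-Petresco-type expansion yields $(xy^r)^n=x^n y^{rn}\cdot c_2(n)\cdot c_3(n)$, where $c_i(n)$ is a product of weight-$i$ commutators with coefficients involving binomials in $n$; at $n=2^{2m-1}$ the factors $x^n$ and $y^{rn}$ vanish, and the $2$-adic valuations of $\binom{n}{2}r$ and $\binom{n}{3}$ are large enough to annihilate the remaining correction terms modulo the orders of the commutator generators read off from Proposition \ref{cenj}. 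The bulk of the work lies in carefully tracking these valuations.
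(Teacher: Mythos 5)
Your overall strategy (check the five defining relations and use that a surjective endomorphism of a finite group is an automorphism) is sound, and the parts you actually carry out are correct: the computation $[xy^r,y]=y^{r(\alpha'-1)}[x,y]=y^{u\ell'}[x,y]=y^{u}[x,y]$ with $y^u$ central of order dividing $2$, and the ensuing verification of the relations $y^{[y,x]}=y^{\alpha'}$, $y^{2^{2m-1}}=1$ and $[x,y]^{2^{m-1}}=1$, all check out. Note that the paper itself does not prove this proposition; it simply cites \cite[Proposition 5.12]{MS3}, so a self-contained verification along your lines is a legitimately different (and more informative) route.

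The difficulty is that you stop exactly where the content of the proposition lies. The two relations you defer, $(xy^r)^{[xy^r,y]}=(xy^r)^{\alpha'}$ and $(xy^r)^{2^{2m-1}}=1$, are not routine bookkeeping. The first amounts to the identity $x^{\alpha'}y^{r\beta'}=(xy^r)^{\alpha'}$, with $\beta'$ the inverse of $\alpha'$ modulo $2^{2m-1}$, and this is precisely where the particular exponent $r=2^{m-2}$ enters: expanding $(xy^k)^{\alpha'}$ for a general exponent $k$ leaves, after the weight-$2$ and weight-$3$ commutator corrections are reduced against the orders of $x^{2s}$, $y^{2s}$ and $[x,y]$, a residual factor of the shape $y^{4sk\ell'+\cdots}$, which vanishes for $k=r$ (since $4sr=2^{2m-1}$) but not for generic $k$ — that failure is exactly why $\Phi$ has to be adjoined as a new automorphism alongside the $\psi_i$ and $f_n$. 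Saying that the right-hand side ``must be expanded via the commutator-power formulas \dots and matched'' and that ``the bulk of the work lies in carefully tracking these valuations'' describes the remaining task rather than performing it; nothing in your write-up certifies that the expansion actually lands on $x^{\alpha'}y^{r\beta'}$, nor that $(xy^r)^{2^{2m-1}}=1$. Until those two computations are written out (or the statement is quoted from \cite[Proposition 5.12]{MS3}), the proof is incomplete at its decisive step.
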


\begin{proof} This is shown in \cite[Proposition 5.12]{MS3}.
\end{proof}

Given an odd integer $n\in\Z$ with inverse $q\in\Z$ modulo $2^{2m-1}$, \cite[Proposition 5.10]{MS3} ensures
that the assignment
\begin{equation*}
x\mapsto x^n,\; y\mapsto y^q
\end{equation*}
extends to an automorphism, say $f_n$, of $K_2(\alpha')$.


\begin{theorem}\label{indexh} Let $U=\{\psi_1^d\psi_2^e f_n\Phi^g\,|\, 1\leq n\leq m-1, n\equiv 1\mod 2, 0\leq d,e,g\leq 1\}$.
Then $K_2(\alpha')=H_2(\alpha')^\Lambda\, U$.
\end{theorem}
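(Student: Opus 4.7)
The plan is to read the stated identity in the same shorthand used for Theorem~\ref{indexj}, namely as the coset factorization
\[
\mathrm{Aut}(K_2(\alpha'))\;=\;\mathrm{Aut}(H_2(\alpha'))^{\Lambda}\,U
\]
of the full automorphism group of $K:=K_2(\alpha')$ as the image of $\Lambda:\mathrm{Aut}(H)\to\mathrm{Aut}(K)$ (with $H:=H_2(\alpha')$) times the explicit finite set~$U$. The argument will mirror the proof of Theorem~\ref{indexj}, replacing the pair $(J_2(\alpha'),H_2(\alpha'))$ by $(H,K)$ and using the upper central series of $K$ in place of that of~$H$.

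First, I would invoke from \cite{MS3} the complete generating structure of $\mathrm{Aut}(K)$: a chain of filtrations exhibiting $\mathrm{Aut}(K)$ as $\mathrm{Inn}(K)\cdot\mathrm{Aut}_1(K)$ together with coset representatives built from $\mu$, the $f_n$'s, $\Phi$, and the $\psi_i$'s. I would then peel off the liftable layers: the swap $\mu$ satisfies $\mu=\nu^{\Lambda}$; the equality $\mathrm{Inn}(K)=\mathrm{Inn}(H)^{\Lambda}$ is automatic because $x,y$ are the images of $X_0,Y_0$; and every center-shift of $K$ lifts, since $\pi_{\alpha'}(Z_2(H))=Z_1(K)$ and Proposition~\ref{auth} supplies the required lifts. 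What this step does \emph{not} a priori cover are the $\psi_i$'s (which involve shifts by $Z_2(K)$-elements possibly outside $Z_1(K)$) and the $f_n,\Phi$ family (whose action on $x,y$ is non-central).

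The remaining task is to collapse the $\{\psi_i,f_n,\Phi\}$-cosets to the representatives appearing in $U$. Using the $Z_2(K)$-structure established in \cite{MS3}, $\psi_3$ and $\psi_4$ should differ from $\psi_1$ and $\psi_2$ only by elements of $\mathrm{Aut}(H)^{\Lambda}$, so the factors $\psi_1^d\psi_2^e$ with $d,e\in\{0,1\}$ suffice to record the $Z_2/Z_1$-layer. The main obstacle is the reduction of the parameter $n$: the key observation is that conjugation of $H$ by $[X_0,Y_0]^k$ is inner, and its image under $\Lambda$ equals $f_{(\alpha')^k}$ on~$K$, so $f_n\equiv f_{n'}\mod\mathrm{Aut}(H)^{\Lambda}$ whenever $n\equiv(\alpha')^k n'\mod 2^{2m-1}$ for some~$k$. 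Combining this congruence with the involution $\mu$ (which identifies $f_n$ with $f_{-n}$-style cosets) and the $\Phi$-twist (which trades one further $\Z/2\Z$ worth of ambiguity) should leave exactly the odd integers $n\in[1,m-1]$ as a complete set of representatives. Carrying out this index bookkeeping, so that the set $U=\{\psi_1^d\psi_2^e f_n\Phi^g\}$ hits every coset of $\mathrm{Aut}(H)^{\Lambda}$ in $\mathrm{Aut}(K)$, is the delicate calculation at the heart of the proof.
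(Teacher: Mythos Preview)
Your overall plan—mirror the proof of Theorem~\ref{indexj} and peel off the layers of $\mathrm{Aut}(K)$ that lift to $\mathrm{Aut}(H)$—is correct, and your treatment of $\mu=\nu^\Lambda$, $\mathrm{Inn}(K)=\mathrm{Inn}(H)^\Lambda$, $\mathrm{Aut}_1(K)$, and (in spirit) the reduction of $\psi_3,\psi_4$ via $\Gamma,\Gamma^\nu$ matches the paper. But the heart of your argument, the ``reduction of the parameter $n$'', has a genuine gap.

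The structure theorem for $\mathrm{Aut}(K)/\mathrm{Aut}_2(K)$ from \cite[Theorem~5.13]{MS3} gives $\langle\overline{\mu}\rangle\ltimes T$ with
\[
T\cong(\Z/2^{m-1}\Z)^\times\times\langle\overline{\Phi}\rangle\times\langle\overline{\Phi^\mu}\rangle,
\]
so there is an extra $\Z/2\Z$ generated by $\overline{\Phi^\mu}$ that you never mention and that your proposed mechanism cannot dispose of. Since $K$ has nilpotency class~$3$, one has $\mathrm{Inn}(K)\subseteq\mathrm{Aut}_2(K)$; hence conjugation by $[X_0,Y_0]^k$ becomes \emph{trivial} in $\mathrm{Aut}(K)/\mathrm{Aut}_2(K)$ and contributes nothing toward the coset representatives there. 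Likewise, $\mu$-conjugation only permutes the generators of $T$ (it swaps $\overline{\Phi}\leftrightarrow\overline{\Phi^\mu}$ and sends $\overline{f_n}\mapsto\overline{f_{n^{-1}}}$, not $\overline{f_{-n}}$) and does not shrink the representative set.

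What the paper actually does is invoke the specific \emph{non-inner} automorphism $\Sigma\in\mathrm{Aut}(H)$ from \cite[Corollary~6.12]{MS3}, given by $X_0\mapsto X_0^{1+r}Y_0^r$, $Y_0\mapsto Y_0^{1+r}X_0^r$, and verify that
\[
\Sigma^\Lambda\equiv f_{r+1}\,\Phi\,\Phi^\mu\mod\mathrm{Aut}_2(K).
\]
This single lift kills one $\Z/2\Z$ in the $\langle\overline{\Phi},\overline{\Phi^\mu}\rangle$ factor and is precisely the step your sketch is missing. The $f_n$'s themselves are \emph{not} further reduced: they remain in $U$ ranging over the full $(\Z/2^{m-1}\Z)^\times$. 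So the ``delicate calculation'' you anticipate for cutting down $n$ via inner automorphisms is a red herring; the actual subtlety is identifying $\Sigma$ and computing its image modulo $\mathrm{Aut}_2(K)$.
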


\begin{proof} Set $K=K_2(\alpha')$ and $H=H_2(\alpha')$. By \cite[Theorem 5.13]{MS3} and its proof, we have
$$
\mathrm{Aut}(K)/\mathrm{Aut}_2(K)=\langle \overline{\mu}\rangle\ltimes T,
$$
where $T\cong (\Z/2^{m-1}\Z)^\times \times (\Z/2\Z)^2$ is given by
$$T=\{\overline{f_n}\,|\, 1\leq n\leq m-1, n\equiv 1\mod 2\}\times\{1,\overline{\Phi}\}\times\{1,\overline{\Phi^\mu}\}.
$$
Here $\overline{f_{r+1}}$ has order 2, so
$$
T=\{1,\overline{f_{r+1}\Phi\Phi^\mu}\}\times\{\overline{f_n}\,|\, 1\leq n\leq m-1, n\equiv 1\mod 2\}\times\{1,\overline{\Phi}\}.
$$
Thus,
$$
\mathrm{Aut}(K)=\{1, \mu\}\{1,f_{r+1}\Phi\Phi^\mu\}\mathrm{Aut}_2(K)\{f_n\,|\, 1\leq n\leq m-1, n\equiv 1\mod 2\}
\{1,\Phi\}.
$$

Let $\Sigma\in \mathrm{Aut}(H)$ be defined by $X\mapsto X^{1+r}Y^r$, $Y\mapsto Y^{1+r}X^r$, as in \cite[Corollary 6.12]{MS3}.
Then $\overline{\Sigma^\Lambda}=\overline{f_{r+1}\Phi\Phi^\mu}$, that is, $\Sigma^\Lambda\equiv f_{r+1}\Phi\Phi^\mu\mod \mathrm{Aut}_2(K)$,
so
\begin{equation}
\label{deck}
\mathrm{Aut}(K)=(\{1, \nu\}\{1,\Sigma\})^\Lambda \mathrm{Aut}_2(K)\{f_n\,|\, 1\leq n\leq m-1, n\equiv 1\mod 2\}
\{1,\Phi\}.
\end{equation}

On the other hand, by \cite[Proposition 5.16]{MS3}, we have
$$
\overline{\langle \psi_1, \psi_2, \psi_3, \psi_4\rangle}=\mathrm{Aut}_2(K)/\mathrm{Inn}(K)\mathrm{Aut}_1(K)\cong (\Z/2\Z)^4,
$$
so
$$
\mathrm{Aut}_2(K)=\mathrm{Inn}(K)\mathrm{Aut}_1(K)\{1,\psi_1\}\{1,\psi_2\}\{1,\psi_3\}\{1,\psi_4\}.
$$
Here
$$
(\mathrm{Inn}(H)\mathrm{Aut}_2(H))^\Lambda=\mathrm{Inn}(K)\mathrm{Aut}_1(K)
$$
by \cite[Proposition 7.2]{MS3}. Moreover, if $\Gamma\in\mathrm{Aut}(H)$ is defined by $X\mapsto X^{1+s}$, $Y\mapsto YX^s$, as in
\cite[Proposition 6.8]{MS3}, we readily see that $\Gamma^\Lambda=\psi_1\psi_4$ and
$(\Gamma^\nu)^\Lambda=\psi_2\psi_3$. Therefore,
\begin{equation}
\label{deck2}
\mathrm{Aut}_2(K)=(\mathrm{Inn}(H)\mathrm{Aut}_2(H)\langle\Gamma,\Gamma^\nu\rangle)^\Lambda \{1,\psi_1\}\{1,\psi_2\}.
\end{equation}

Combining (\ref{deck}) and (\ref{deck2}) we obtain $K_2(\alpha')=H_2(\alpha')^\Lambda\, U$.
\end{proof}

\begin{prop}\label{isoK} We have $K_2(\alpha')\cong K_2(\alpha)$.
\end{prop}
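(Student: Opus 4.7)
\medskip
\noindent\textbf{Proof proposal.} My plan is to mimic the strategy of Proposition \ref{suf2} and show that the assignment
\[
x\mapsto a,\quad y\mapsto b^t,
\]
where $t$ is the positive odd integer identified at the beginning of this section (satisfying $\alpha'\equiv\alpha^t\mod 2^{2m-1}$), extends to an isomorphism $K_2(\alpha')\to K_2(\alpha)$.

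Set $C=[a,b]$. Since $a$ has order dividing $2^{2m-1}$, the identities $a^{C^t}=a^{\alpha^t}=a^{\alpha'}$ and $b^{C^{-t}}=b^{\alpha^t}=b^{\alpha'}$ reduce the verification of the commutation relations $a^{[a,b^t]}=a^{\alpha'}$ and $(b^t)^{[b^t,a]}=(b^t)^{\alpha'}$ to checking that $[a,b^t]C^{-t}\in C_{K_2(\alpha)}(a)$ and $[b^t,a]C^{t}\in C_{K_2(\alpha)}(b^t)$. I would expand $[a,b^t]$ using the standard collection identity $[a,b^t]=\prod_{i=0}^{t-1} C^{b^i}$ combined with $C^b=b^{\alpha-1}C$ (obtained from $cb=b^\alpha c$), rewriting $[a,b^t]C^{-t}$ as a power of $b$ modulo the intersection $\langle a\rangle\cap\langle b\rangle$. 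The resulting condition is an integer congruence analogous to (\ref{suma2}), which holds because $\alpha\equiv 1\mod 2^m$ and $t(t-1)/2$ has sufficient $2$-adic valuation for odd $t$.

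The remaining defining relations to check are $a^{2^{2m-1}}=(b^t)^{2^{2m-1}}=1$, which are immediate, and $[a,b^t]^{2^{m-1}}=1$, which I would deduce from $C^{2^{m-1}}=1$ in $K_2(\alpha)$ together with the previous expression for $[a,b^t]$, since its noncentral factor will have $2$-adic exponent large enough to vanish on raising to the power $2^{m-1}$. Once the assignment extends to a homomorphism $\Omega:K_2(\alpha')\to K_2(\alpha)$, surjectivity is immediate because $a$ and $b$ lie in the image ($b\in\langle b^t\rangle$ since $t$ is odd and $b$ has $2$-power order). Comparing $|K_2(\alpha')|$ with $|K_2(\alpha)|$ then upgrades $\Omega$ to an isomorphism.

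The main obstacle will be the careful commutator bookkeeping in the second paragraph: tracking the exact form of $[a,b^t]$ in $K_2(\alpha)$ accurately enough both to verify the centralization condition and to confirm the additional relation $[a,b^t]^{2^{m-1}}=1$, which has no counterpart in $H_2(\alpha)$ or $J_2(\alpha)$. It is precisely the presence of $C^{2^{m-1}}=1$ in $K_2(\alpha)$ that permits the substantially weaker hypothesis on $\alpha,\alpha'$ here compared with Theorem \ref{suf}(b).
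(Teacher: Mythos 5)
Your choice of the map $x\mapsto a$, $y\mapsto b^t$ is exactly the one the paper uses, and the statement is true, but the key step of your second paragraph would fail if carried out as you describe it. The congruence ``analogous to (\ref{suma2})'' is $\alpha+2\alpha^2+\cdots+(t-1)\alpha^{t-1}\equiv 0\bmod 2^{m-1}$, which reduces to $t(t-1)/2\equiv 0\bmod 2^{m-1}$, and this is simply false for a general odd $t$: for $t=3$ one gets $t(t-1)/2=3$, which is odd, while $m>2$ in this section. And $t$ really can be $3$: it is only determined modulo $2^{m-1}$ (the order of $\alpha$ modulo $2^{2m-1}$) and ranges over all odd residues as $\alpha'$ varies. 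So ``$t(t-1)/2$ has sufficient $2$-adic valuation for odd $t$'' is not something you can rely on; in Theorem \ref{suf}(b) the analogous congruence only held because there $f\equiv 1\bmod 2^m$, whereas here $t$ is merely odd.

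What actually rescues the argument is neither the relation $[a,b]^{2^{m-1}}=1$ nor any congruence on $t$, but the fact that $C_{K_2(\alpha)}(a)$ is much larger than $\langle a\rangle$: by Proposition \ref{cenj}, $Z(K_2(\alpha))=\langle a^{2^m},b^{2^m}\rangle$, so the correction term $b^{(\alpha-1)(\alpha+2\alpha^2+\cdots+(t-1)\alpha^{t-1})}=(b^{2^m})^{\ell(\cdots)}$ is central for \emph{every} value of the sum, simply because $2^m\mid\alpha-1$. Thus the condition you are trying to verify is vacuous, not a congruence modulo $2^{m-1}$; reducing ``modulo $\langle a\rangle\cap\langle b\rangle$'' is the wrong reduction here, since the centralizer of $a$ is not $\langle a\rangle$. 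The paper packages this more cleanly and avoids collection altogether: since $[a,b]\in Z_2(K_2(\alpha))$, one has $[a,b^t]=[a,b]^tz$ with $z\in Z(K_2(\alpha))$, whence $a^{[a,b^t]}=a^{[a,b]^t}=a^{\alpha^t}=a^{\alpha'}$ and $[a,b^t]^{2^{m-1}}=[a,b]^{2^{m-1}t}z^{2^{m-1}}=1$, the center having exponent $2^{m-1}$. Your closing step (surjectivity plus equal orders) is fine, though the paper instead exhibits the inverse map $a\mapsto x$, $b\mapsto y^q$ with $qt\equiv 1\bmod 2^{2m-1}$. If you replace the appeal to an (\ref{suma2})-type congruence by the observation about $Z(K_2(\alpha))$ --- or better, by the $Z_2$ argument --- your proof goes through.
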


\begin{proof} Consider the assignment
\begin{equation}
\label{tas}
x\mapsto a, y\mapsto b^t.
\end{equation}
We claim that (\ref{tas}) extends to an isomorphism $K_2(\alpha')\to K_2(\alpha)$. We have $[a,b]\in Z_2(K_2(\alpha))$ by
Proposition \ref{cenj}, so
$
[a,b^t]\equiv [a,b]^t\mod Z(K_2(\alpha)),
$
that is,
$
[a,b^t]=[a,b]^t z, z\in Z(K_2(\alpha)).
$
Hence
$$
a^{[a,b^t]}=a^{[a,b]^t z}=a^{[a,b]^t}=a^{\alpha^t}=a^{\alpha'},\; b^{[b^t,a]}=b^{[b,a]^t z^{-1}}=b^{[b,a]^t}=b^{\alpha^t}=b^{\alpha'}.
$$
This shows that the first two defining relations of $K_2(\alpha')$ are preserved. The next two defining relations of
$K_2(\alpha')$ are obviously preserved.  As for the fifth, we have $Z(K_2(\alpha))=\langle a^{2s},b^{2s}\rangle$ by Proposition \ref{cenj},
so
$$
[a,b^t]^{2^{m-1}}=([a,b]^t z)^{2^{m-1}}=[a,b]^{2^{m-1} t} z^{2^{m-1}}=1.
$$
Thus (\ref{tas}) extends to an homomorphism $K_2({\alpha'})\to K_2(\alpha)$. Likewise, if $q\in\Z$ is the inverse
of $t$ modulo $2^{2m-1}$, we have a homomorphism $K_2(\alpha)\to K_2({\alpha'})$ such that
$a\mapsto x, b\mapsto y^q$. As these are inverse of each other, we deduce $K_2(\alpha)\cong K_2({\alpha'})$.
\end{proof}

For $i\in\Z$, we set $\phi(i)=i(i-1)/2$.

\begin{theorem}\label{h2} We have $H_2(\alpha')\cong H_2(\alpha)$ if and only if $\alpha'\equiv\alpha\mod 2^{2m-2}$.
\end{theorem}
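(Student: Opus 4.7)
I plan to mirror the structure of Theorems \ref{exel} and \ref{necj2}, with Proposition \ref{isoK} playing the role that Proposition \ref{suf2} played in the $H_1$ case.

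\medskip

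\noindent\textbf{Necessity.} The plan is to apply Proposition \ref{tool2} with $G_1=H_2(\alpha')$, $G_2=H_2(\alpha)$, $N_1,N_2$ their centers, $L_1=K_2(\alpha')$, $L_2=K_2(\alpha)$, $\gamma$ the isomorphism from Proposition \ref{isoK}, $E=\{X,Y\}$, and $U$ as in Theorem \ref{indexh}. The lifting hypothesis of Proposition \ref{tool2} is legitimate: Proposition \ref{auth} supplies automorphisms of $H_2(\alpha')$ whose action on $X,Y$ is translation by any pair of elements of $Z_2(H_2(\alpha'))$, and $Z(H_2(\alpha'))\subseteq Z_2(H_2(\alpha'))$. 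Given $u=\psi_1^d\psi_2^e f_n\Phi^g\in U$, the image of $X,Y$ under $u\gamma$ in $K_2(\alpha)$ is a word in $a,b^t$, which I lift canonically to words $h_{X,u}, h_{Y,u}$ in $A,B^t$ of $H_2(\alpha)$, determined up to an element of $Z(H_2(\alpha))$. Passing from $h_{\cdot,u}$ to $h_{\cdot,u}\cdot z$ by an arbitrary $z\in Z(H_2(\alpha))$ is absorbed by composing with an automorphism of $H_2(\alpha)$ of the type furnished by Proposition \ref{auth}, so the question reduces, as in the proofs of Theorems \ref{exel} and \ref{necj2}, to determining when the canonical assignment $X\mapsto h_{X,u}$, $Y\mapsto h_{Y,u}$ extends to a genuine homomorphism $H_2(\alpha')\to H_2(\alpha)$.

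\medskip

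Next, assuming such a homomorphism exists for some $u$, I impose the two defining commutator-power relations of $H_2(\alpha')$ on $h_{X,u}, h_{Y,u}$ in $H_2(\alpha)$. To evaluate these, I use Proposition \ref{cenj} (describing the upper central series of $H_2(\alpha)$ via its quotient by $Z(J_2(\alpha))$) together with the multiplication/commutator/power formulas from \cite[Theorems 3.1, 3.2, 3.5 and Proposition 3.4]{MS3}; these are exactly the tools that drove the parameter analysis in Theorem \ref{necj2}. As there, the computation reduces modulo higher central terms to two congruences analogous to (\ref{poop5})--(\ref{poop8}), whose subtraction should yield a congruence of the form $2\ell(\text{parameter})\equiv r+\ell\mod s$ (cf.\ (\ref{poop9})), but now constraining the exponent $n$ appearing in $f_n\in U$ modulo $2^{m-2}$. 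Because $\alpha'\equiv\alpha^t\mod 2^{2m-1}$ and $n$ dictates $t$ up to the $\psi_1,\psi_2,\Phi$ corrections (which only alter $t$ modulo $2^{m-1}$), the upshot is $t\equiv 1\mod 2^{m-2}$, i.e., $\alpha'\equiv\alpha\mod 2^{2m-2}$.

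\medskip

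\noindent\textbf{Sufficiency.} Assume $\alpha'\equiv\alpha\mod 2^{2m-2}$. A direct adaptation of Theorem \ref{suf}(b) via the bare substitution $X\mapsto A, Y\mapsto B^f$ with $f\equiv 1\mod 2^{m-1}$ only recovers the stronger hypothesis $\alpha'\equiv\alpha\mod 2^{2m-1}$, so the plan is to allow a twist inside the third center: try
\[
X\mapsto A,\qquad Y\mapsto B^t A^{sc}
\]
with $s=2^{m-1}$, where $t$ is the odd integer satisfying $\alpha'\equiv\alpha^t\mod 2^{2m-1}$ and $c\in\Z$ is to be chosen. The commutator identities in $H_2(\alpha)$ (via \cite[Theorems 3.1, 3.2, 3.5]{MS3}) reduce the defining-relation check to a single congruence resembling (\ref{suma2}) but enhanced by the contribution of the $A^{sc}$ twist; concretely, modulo the intersection $\langle A\rangle\cap\langle B\rangle$ (which in $H_2(\alpha)$ has exponent $2^{2m-2}$ rather than the $2^{2m-1}$ of $J_2(\alpha)$) the congruence is of the form $(t-1)t/2+\lambda c\equiv 0\mod 2^{m-2}$ with $\lambda$ a unit, which is solvable for $c$ precisely under the hypothesis $t\equiv 1\mod 2^{m-2}$. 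The resulting map induces an isomorphism because it matches Proposition \ref{isoK} after projection to $K_2$ and therefore sends a generating set of $Z(H_2(\alpha'))$ bijectively to that of $Z(H_2(\alpha))$, so injectivity on the bottom of the lower/upper central series propagates upward by nilpotence, and the orders of the two groups agree.

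\medskip

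\noindent\textbf{Main obstacle.} The hard part is the necessity analysis: $U$ is larger than in Theorem \ref{exel} (it involves the additional automorphisms $\Phi,\psi_1,\psi_2$), and the commutator $[h_{X,u},h_{Y,u}]$ must be tracked modulo successive terms of the upper central series of $H_2(\alpha)$ in order to isolate the contribution that constrains $t$ rather than the nuisance parameters $d,e,g$. I expect the bookkeeping to closely parallel the $J_2$ computation in Theorem \ref{necj2}, with the governing congruence living modulo $2^{m-2}$ instead of modulo $s=2^{m-1}$, which is precisely the source of the relaxed modulus $2^{2m-2}$ in the final statement.
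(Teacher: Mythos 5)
Your reduction for necessity is the same as the paper's: apply Proposition \ref{tool2} with $\gamma$ from Proposition \ref{isoK} and $U$ from Theorem \ref{indexh}, using Proposition \ref{auth} (together with $Z(H_2(\alpha'))\subseteq Z_2(H_2(\alpha'))$) to absorb central corrections. But the endgame you predict is not what happens. The candidate lifts split into two genuinely different families according to whether $\Phi$ occurs in $u$, namely $X\mapsto A^i,\ Y\mapsto B^j$ and $X\mapsto (AB^r)^i,\ Y\mapsto B^j$ with $i,j$ odd, and one must determine exactly when each extends: the first does so iff $i\equiv j\equiv 1\bmod 2s$ and $\ell'\equiv\ell\bmod s$; the second iff $\ell'\equiv\ell\bmod r$, $\ell'\not\equiv\ell\bmod s$, $i\equiv 1\bmod 2s$ and $j=1+kr$ with $k\equiv 1\bmod 4$. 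There is no single subtracted congruence of the shape $2\ell(\cdot)\equiv r+\ell\bmod s$ as in \eqref{poop9}; that pattern belongs to the $J_2$ analysis and does not transfer. As written, your necessity argument is a plan with the correct framework but an incorrect guess at the computation it must produce.

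The genuine gap is in sufficiency. Your map $X\mapsto A$, $Y\mapsto B^tA^{sc}$ cannot work in the critical regime $\ell'\equiv\ell\bmod r$, $\ell'\not\equiv\ell\bmod s$, which is exactly the case that separates the modulus $2^{2m-2}$ from $2^{2m-1}$. Indeed $A^{sc}$ lies in $Z_3(H_2(\alpha))$, and conjugation by $A^{sc}$ fixes $C=[A,B]$ because $C^{A^{sc}}=A^{sc(\alpha^{-1}-1)}C$ with $sc(\alpha^{-1}-1)\equiv 0\bmod 2^{2m-1}$; hence $[A,B^tA^{sc}]=[A,A^{sc}]\,[A,B^t]^{A^{sc}}$ differs from $[A,B^t]$ only by a central factor, and $A^{[A,B^tA^{sc}]}=A^{[A,B^t]}=A^{1+2s\ell t}C^{2s\ell\phi(t)}$ exactly as in \eqref{XA4}. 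By the unique normal form in $H_2(\alpha)$ (where in fact $\langle A\rangle\cap\langle B\rangle=1$, so your premise that this intersection has exponent $2^{2m-2}$ is false), matching this against $A^{\alpha'}=A^{1+2s\ell'}$ forces $t\equiv 1\bmod 2^m$ and therefore $\ell'\equiv\ell\bmod 2^{m-1}$, i.e., only the stronger congruence $\alpha'\equiv\alpha\bmod 2^{2m-1}$. The extra factor of $2$ in the modulus is obtained precisely by twisting $X$ by $B^r$ via the automorphism $\Phi$ of Proposition \ref{newauto}, leading to the assignment $X\mapsto(AB^r)^i$, $Y\mapsto B^j$ of \eqref{XAr}; a twist of $Y$ by an element of $Z_3(H_2(\alpha))$ is invisible to the defining relations and cannot produce it.
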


\begin{proof} Let $\gamma:K_2(\alpha')\to K_2(\alpha)$ be the isomorphism (\ref{tas}). For $u=\psi_1^d\psi_2^e f_n \Phi^g$,
if $g=0$, then
$$
xu\gamma=a^i, yu\gamma=b^j,
$$
for some odd integers $i$ and $j$, while if $g=1$, then
$$
xu\gamma=(ab^r)^i, yu\gamma=b^j,
$$
for some odd integers $i$ and $j$. We next apply Proposition~\ref{tool2} with $G_1=H_2(\alpha')$, $G_2=H_2(\alpha)$, $E=\{X,Y\}$, and $U$ as in Theorem \ref{indexh}.
This is a valid application thanks to Proposition \ref{autexcel}. For $g=0$, we take $h_{U,X}=A^i$ and $h_{U,Y}=B^j$,
and for $g=1$, we take $h_{U,X}=(AB^r)^i$ and $h_{U,Y}=B^j$. By Proposition~\ref{tool2}, $H_2(\alpha')\cong H_2(\alpha)$ if and only if
at least one of the assignments
\begin{equation}
\label{XA}
X\mapsto A^i, Y\mapsto B^j,
\end{equation}
\begin{equation}
\label{XAr}
X\mapsto (AB^r)^i, Y\mapsto B^j,
\end{equation}
extends to an isomorphism $H_2(\alpha')\to H_2(\alpha)$, where $i$ and $j$ are odd.

We proceed to prove that (\ref{XA}) extends to an isomorphism $H_2(\alpha')\to H_2(\alpha)$ if and only if
$i\equiv 1\mod 2s$, $j\equiv 1\mod 2s$, and $\ell'\equiv \ell\mod s$, while (\ref{XAr})
extends to an isomorphism $H_2(\alpha')\to H_2(\alpha)$ if and only if
$\ell'\equiv \ell\mod r$, $\ell'\not\equiv \ell\mod s$, $i\equiv 1\mod 2s$, and $j=1+kr$ with $k\equiv 1\mod 4$.
In particular, $H_2(1+8\ell)\cong H_2(9)$, where all integers $1+8\ell$ are congruent to 9 modulo $16$.

We set $C=[A,B]$ and observe from the proof of \cite[Theorem 8.1]{MS} that every element of $H_2(\alpha)$ can be written in one
and only one way as product of elements taken from $\langle A\rangle$, $\langle B\rangle$, and $\langle C\rangle$,
in any fixed order.

Note that (\ref{XA}) extends to an isomorphism if and only if
\begin{equation}
\label{XA2}
A^{[A^i,B^j]}=A^{\alpha'}, B^{[B^j,A^i]}=B^{\alpha'}.
\end{equation}

By \cite[Corollary 6.1]{MS3},
\begin{equation}
\label{XA3}
[A^i,B^j]=A^{-2s\ell j\phi(i)}B^{2s\ell i\phi(j)}C^{ij-2s\ell \phi(i)\phi(j)}.
\end{equation}
Using (\ref{XA3}) and \cite[Corollary 6.1]{MS3}, we see that
\begin{equation}
\label{XA4}
A^{[A^i,B^j]}=A^{1+2s\ell ij}C^{2s\ell i\phi(j)}.
\end{equation}
On the other hand,
\begin{equation}
\label{XA5}
A^{\alpha'}=A^{1+2s\ell'}.
\end{equation}
Thus (\ref{XA4}) is equal to (\ref{XA5}) if and only if $\ell ij\equiv \ell'\mod s$ and $j\equiv 1\mod 2s$.

Applying the automorphism $A\leftrightarrow B$ to (\ref{XA4}) and then making the substitution $i\leftrightarrow j$ yields
\begin{equation}
\label{XA6}
B^{[B^j,A^i]}=B^{1+2s\ell ij}C^{-2s\ell j\phi(i)}.
\end{equation}
On the other hand,
\begin{equation}
\label{XA7}
B^{\alpha'}=B^{1+2s\ell'}.
\end{equation}
Thus (\ref{XA6}) is equal to (\ref{XA7}) if and only if $\ell ij\equiv \ell'\mod s$ and $i\equiv 1\mod 2s$.

We deduce that (\ref{XA2}) holds if and only if $i\equiv 1\mod 2s$, $j\equiv 1\mod 2s$, and $\ell \equiv \ell'\mod s$.

Moving now to (\ref{XAr}), note that this assignment extends to an isomorphism if and only if
\begin{equation}
\label{P}
((AB^r)^i)^{[(AB^r)^i,B^j]}=(AB^r)^{i\alpha'}
\end{equation}
and
\begin{equation}
\label{Q}
B^{[B^j,(AB^r)^i]}=B^{\alpha'}.
\end{equation}
Indeed, the conditions are clearly necessary. Using \cite[Corollary 6.4]{MS3}, we see that
\begin{equation}
\label{P1}
(AB^r)^i=A^{i+u\phi(i)}B^{ir+\ell(r+1)\phi(i)u/2}C^{-r\phi(i)+\ell(r-1)\phi(i)u/2}.
\end{equation}
By \cite[Note 6.5]{MS3}, it follows that $((AB^r)^i)^u=1$. Thus if (\ref{P}) and (\ref{Q}) hold,
all 4 defining relations of $H_2(\alpha)$ are preserved, so  (\ref{XAr}) extends to an homomorphism.
Using (\ref{P1}) and \cite[Corollary 6.3]{MS3}, we find that
\begin{equation}
\label{P2}
[(AB^r)^i,B^j]=A^{-2s\ell\phi(i)j}B^{2s\ell i\phi(j)+u(1-\phi(i))}C^{ij+u\phi(i)j},
\end{equation}
where $C^s$ is central in $H_2(\alpha)$ by Proposition \ref{cenj}. The given homomorphism will then map $[X,Y]^s$ to $C^{ijs}$.
As $i$ and $j$ are odd, the given homomorphism will restrict to an isomorphism between the corresponding
centers, and it will then be an isomorphism.

By means of (\ref{P1}) and (\ref{P2}), a careful use of \cite[Corollary 6.1]{MS3} yields
\begin{equation}
\label{P3}
((AB^r)^i)^{[(AB^r)^i,B^j]}=A^{i+u\phi(i)+2s\ell i^2 j}B^{u+ir+\ell(r+1)\phi(i)u/2}C^{-r\phi(i)+\ell(r-1)\phi(i)u/2+u\phi(i)+
2s\ell i^2\phi(j)+u(1-\phi(i))}.
\end{equation}

On the other hand, \cite[Corollary 6.4]{MS3} gives
\begin{equation}
\label{P4}
(AB^r)^{i\alpha'}=A^{i+u\phi(i)+2s\ell' i}B^{u+ir+u+\ell(r+1)\phi(i)u/2}C^{-r\phi(i)+\ell(r-1)\phi(i)u/2-u\ell'\phi(i)-
i^2\ell' u/2}.
\end{equation}

By \cite[Corollary 6.1]{MS3} all factors appearing in (\ref{P2}) commute with each other, so
\begin{equation}
\label{P5}
[B^j,(AB^r)^i]=A^{2s\ell\phi(i)j}B^{-2s\ell i\phi(j)+u(1-\phi(i))}C^{-ij-u\phi(i)j}.
\end{equation}
It follows from (\ref{P5}) and \cite[Corollary 6.1]{MS3} that
\begin{equation}
\label{P6}
B^{[B^j,(AB^r)^i]}=B^{1-2s\ell ij}C^{-2s\ell\phi(i)j}.
\end{equation}
On the other hand, we have
\begin{equation}
\label{P7}
B^{\alpha'}=B^{1+2s\ell'}.
\end{equation}

We readily see that (\ref{P6}) is equal to  (\ref{P7}) if and only if $i\equiv 1\mod 2s$ and $\ell j\equiv \ell'\mod s$.
On the other hand, (\ref{P3}) is equal to (\ref{P4}) if and only if $\ell ij\equiv \ell'\mod s$ and
$$
2s\ell i^2 \phi(j) + u\equiv u \phi(i)-i^2\ell' u/2\mod 2u.
$$
Thus (\ref{P6}) is equal to  (\ref{P7}) and (\ref{P3}) is equal to (\ref{P4}) if and only if
\begin{equation}
\label{P8}
i\equiv 1\mod 2s,\; \ell j\equiv \ell'\mod s,\; 2s\ell \phi(j) + u\equiv -\ell' u/2\mod 2u.
\end{equation}

Suppose first that (\ref{P8}) holds. Then
$$
2s\ell \phi(j)\equiv 0\mod u/2,
$$
which is equivalent to
$$
j\equiv 1\mod r.
$$
Thus $\ell\equiv\ell'\mod r$, so that $\ell'=\ell+qr$ for some $q\in\Z$.
Writing $j=1+rk$, with $k\in\Z$, and replacing this in (\ref{P8}) yields
\begin{equation}
\label{P9}
\ell j k u/2 + u \equiv -\ell' u/2\mod 2u.
\end{equation}
Note that (\ref{P9}) implies
$
\ell j k\equiv \ell'\mod 2,
$
so $k$ must be odd. Also, from $\ell j\equiv \ell'\mod s$, we obtain
$$
\ell (1+rk)\equiv \ell+qr\mod s.
$$
As $\ell$ is odd, this implies
$r k\equiv qr\mod s$,
or
$
k\equiv q\mod 2,
$
so $q$ must be odd too. Going back to (\ref{P9}), we may write it in the form
$
\ell j k + 2 \equiv -\ell'\mod 4,
$
or
$
\ell j k + 2 \equiv -\ell-rq\mod 4,
$
that is
$
\ell(1+jk)\equiv 2-rq\mod 4.
$
Here $j$, $k$, and and $\ell$ are odd, so the last condition becomes
$
1+jk\equiv 2-rq\mod 4.
$
If $m>3$ this translates into
$$
1+(1+rk)k\equiv 1+k\equiv 2\mod 4,
$$
so $k\equiv 1\mod 4$. If $m=3$, the fact that $q$ is odd implies
$
1+(1+2k)k\equiv 0\mod 4.
$
As $k^2\equiv 1\mod 4$, we deduce
$
1+k+2\equiv 0\mod 4,
$
so
$
k\equiv 1\mod 4.
$
Retracing our steps we see that the conditions $\ell'\equiv \ell\mod r$, $\ell'\not\equiv \ell\mod s$, $i\equiv 1\mod 2s$, and $j=1+kr$ with
$k\equiv 1\mod 4$, are sufficient.
\end{proof}

\section{Necessity for $J_2(\alpha)$ Part II}\label{siete}

We assume in this section that $p=2$ and $m>2$. We set $s=2^{m-1}$, $u=s^2$, $r=s/2$, and $\overline{r}=r/2$,
and maintain the notation introduced in Section \ref{part1}.


\begin{theorem}\label{necj22} Suppose that $\alpha'\equiv\alpha\mod 2^{2m-2}$ but
$\alpha'\not\equiv\alpha\mod 2^{2m-1}$. Then $J_2(\alpha')\not\cong J_2(\alpha)$.
\end{theorem}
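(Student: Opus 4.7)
The plan is to imitate the strategy of Theorem~\ref{necj2}, but now starting from a twisted isomorphism $\gamma:H_2(\alpha')\to H_2(\alpha)$ between the central quotients, as forced by Theorem~\ref{h2}. Under our hypothesis $\alpha'\equiv\alpha\pmod{2^{2m-2}}$ and $\alpha'\not\equiv\alpha\pmod{2^{2m-1}}$, we have $\ell'-\ell=qr$ with $q$ odd, so $\ell'\equiv\ell\pmod r$ but $\ell'\not\equiv\ell\pmod s$. By Theorem~\ref{h2}, no isomorphism of the form $X_0\mapsto A_0^i,\,Y_0\mapsto B_0^j$ can exist, and every isomorphism $H_2(\alpha')\to H_2(\alpha)$ must take the twisted shape $X_0\mapsto (A_0B_0^r)^i,\,Y_0\mapsto B_0^j$ with $i\equiv 1\pmod{2s}$ and $j=1+kr$, $k\equiv 1\pmod 4$. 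Fix, for concreteness, $\gamma$ corresponding to $i=1$ and $k=1$, that is, $\gamma:X_0\mapsto A_0B_0^r,\,Y_0\mapsto B_0^{1+r}$.

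Assume for contradiction that $J_2(\alpha')\cong J_2(\alpha)$. The plan is to apply Proposition~\ref{tool2} with $G_1=J_2(\alpha')$, $G_2=J_2(\alpha)$, centers as characteristic subgroups, $E=\{X,Y\}$, and $U=V\{1,\Gamma\}$ as in Theorem~\ref{indexj}; Proposition~\ref{autjo} supplies the lifting hypothesis. For each $u\in U$, I would compute $h_{X,u}$ and $h_{Y,u}$ modulo $Z(J_2(\alpha))$ and absorb $Z_2(J_2(\alpha))$-corrections via Proposition~\ref{autjo}. The outcome is that the problem reduces to showing that, for some integers $i,j,a,b$ and some $\epsilon\in\{0,1\}$ (tracking whether $u\in V$ or $u\in V\Gamma$), no assignment of the form
$$
X\mapsto (AB^r)^{1+s\epsilon}\,A^{2si}B^{2sj},\qquad Y\mapsto B^{1+r}(AB^r)^{s\epsilon}\,A^{2sa}B^{2sb}
$$
extends to a homomorphism $J_2(\alpha')\to J_2(\alpha)$. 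Injectivity, once such a map exists, follows automatically from the action on centers combined with the common order $2^{7m-3}$, exactly as in Theorem~\ref{necj2}.

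The remaining step, and the one carrying essentially all the technical weight, is to expand both defining relations $X^{[X,Y]}=X^{\alpha'}$ and $Y^{[Y,X]}=Y^{\alpha'}$ in $J_2(\alpha)$ using the collection formulas of \cite[Section~3 and Section~6]{MS3}, and to check that the two resulting congruences modulo $4us$ are jointly unsolvable. I expect the analogue of equations (\ref{poop6}) and (\ref{poop8}) to appear, and subtracting them should yield a congruence of the form
$$
2\ell(j-a)\equiv w\pmod{s}
$$
for a constant $w$ built from $r$, $\ell$, $q$ and $\epsilon$; the extra $B^r$-twist in $\gamma$ and the $j=1+kr$ replacement introduce new cross-terms relative to Theorem~\ref{necj2}, so keeping track of them through iterated applications of \cite[Theorem~3.2, Theorem~3.5, Proposition~3.4, Corollary~6.1]{MS3} is the main obstacle. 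The crucial point I would verify case by case (in $\epsilon$ and in the fine residue of $q$) is that $w$ is odd for every admissible choice, while the left-hand side is always even because $m>2$; this incompatibility is the obstruction precipitated by the hypothesis $v_2(\alpha'-\alpha)=2m-2$ and yields the desired contradiction.
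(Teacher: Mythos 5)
Your strategy is exactly the one the paper uses: fix the twisted isomorphism $\gamma:X_0\mapsto A_0B_0^r$, $Y_0\mapsto B_0^{1+r}$ supplied by the proof of Theorem~\ref{h2}, invoke Proposition~\ref{tool2} with the coset decomposition of Theorem~\ref{indexj} and the lifting property of Proposition~\ref{autjo}, reduce to a two-parameter family of candidate assignments, and derive a parity obstruction from the two defining relations. Your observation that injectivity follows from the action on centers, and your inclusion of the case $\epsilon=0$ (i.e.\ $u\in V$ rather than $u\in V\Gamma$), which the paper's write-up passes over silently, are both sound.

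The gap is that the step ``carrying essentially all the technical weight,'' as you yourself put it, is not carried out: you predict that the two relations will produce congruences whose difference has the form $2\ell(j-a)\equiv w\pmod s$ with $w$ odd, and you state that you ``would verify'' this case by case, but you never derive the congruences. This cannot be waved through as routine. The whole content of the theorem lives in that computation --- for $m=2$ the analogous system \emph{is} solvable (Theorem~\ref{necj2}), so the conclusion is genuinely sensitive to the arithmetic of the cross-terms introduced by the $B^r$-twist, and the paper needs an entire appendix of collection-formula computations (culminating in congruences \eqref{cong.LR.1} and \eqref{cong.LR.2}) to extract the obstruction. Moreover, the final contradiction there does not take quite the shape you conjecture: after substituting $\ell'=\ell-r-s\ell_1'$, subtracting the two congruences, dividing by $2u$, and reducing modulo $2$, the variables $i,j,a,b$ all enter through even coefficients and the obstruction is a single constant parity failure $1+(\text{even})\equiv 0\pmod 2$, rather than a condition on $j-a$ alone. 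Until you actually produce and verify the two congruences (for both values of $\epsilon$), the proof is a plausible plan rather than a proof.
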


\begin{proof} Suppose, if possible, that $J_2(\alpha')\cong J_2(\alpha)$. The proof of Theorem \ref{h2} provides an explicit isomorphism
$\gamma:H_2(\alpha')\to H_2(\alpha)$, given by
$$
X_0\mapsto A_0B_0^r, Y_0\mapsto B_0^{1+r}.
$$
We next apply Proposition~\ref{tool2} with $G_1=J_2(\alpha')$, $G_2=J_2(\alpha)$, $E=\{X,Y\}$, and $U$ as in Theorem~\ref{indexj}.
This is a valid application thanks to Proposition \ref{autjo}. Arguing as in the proof of Theorem~\ref{necj2}, we see that
for some choice of integers $i,j,a,b$, the assignment
$$
X\mapsto (AB^r)^{1+s}A^{2si}B^{2sj}, Y\mapsto B^{1+r} (AB^r)^s A^{2sa}B^{2sb}
$$
extends to an isomorphism $J_2(\alpha')\to J_2(\alpha)$. This implies that the following equations hold:
\begin{equation}\label{eq.LR.1}
    ((AB^r)^{1+s}A^{2si}B^{2sj})^{[(AB^r)^{1+s}A^{2si}B^{2sj},B^{1+r} (AB^r)^s A^{2sa}B^{2sb}]}
    = ((AB^r)^{1+s}A^{2si}B^{2sj})^{\alpha'},
\end{equation}
\begin{equation}\label{eq.LR.2}
    (B^{1+r} (AB^r)^s A^{2sa}B^{2sb})^{[B^{1+r} (AB^r)^s A^{2sa}B^{2sb},(AB^r)^{1+s}A^{2si}B^{2sj}]}
    = (B^{1+r} (AB^r)^s A^{2sa}B^{2sb})^{\alpha'}.
\end{equation}


The calculation of both sides of \eqref{eq.LR.1} and \eqref{eq.LR.2} are tedious and they appear in the Appendix.

From now on, we set $\varphi(n)=n(n-1)(n-2)/6$, for $n\in\Z$. From \eqref{eq.L.1} and \eqref{eq.R.1} in the Appendix, we get that \eqref{eq.LR.1} is equivalent to
\begin{align*}
    &A^{2s(\ell - \ell') + u\ell} B^{r + sr + 2sj + u\ell(\overline{r} - 1)} C^{sr(\ell + \ell') + u(\overline{r} + 1)} B^{-r - sr - 2sj - u(\overline{r}\ell + \ell')}\\
    &= A^{2u(2i\ell' - 2j\ell' + \ell' - 4i\ell - 2j\ell - 2b\ell - 2\ell) - ur\ell(\ell' + \ell + 1) + us\overline{r}(\ell' + 1) + u\varphi(2s\ell') + 2us(i + j + a + b + 1)}.
\end{align*}

By \cite[Theorem 3.1]{MS3} we have
$[C^{sr(\ell + \ell') + u(\overline{r} + 1)},B^{-r - sr - 2sj - u(\overline{r}\ell + \ell')}] = 1$, so
\begin{align*}
    &A^{2s(\ell - \ell') + u\ell} B^{-u(\ell + \ell')} C^{sr(\ell + \ell') + u(\overline{r} + 1)}\\
    &= A^{2u(2i\ell' - 2j\ell' + \ell' - 4i\ell - 2j\ell - 2b\ell - 2\ell) - ur\ell(\ell' + \ell + 1) + us\overline{r}(\ell' + 1) + u\varphi(2s\ell') + 2us(i + j + a + b + 1)},
\end{align*}
and from this
\begin{equation}\label{eq.task.1}
    \begin{split}
        &A^{2s(\ell - \ell') + u\ell} C^{sr(\ell + \ell') + u(\overline{r} + 1)}\\
        &= A^{-u(\ell + \ell') + 2u(2i\ell' - 2j\ell' + \ell' - 4i\ell - 2j\ell - 2b\ell - 2\ell) - ur\ell(\ell' + \ell + 1) + us\overline{r}(\ell' + 1) + u\varphi(2s\ell') + 2us(i + j + a + b + 1)}.
    \end{split}
\end{equation}

From \eqref{eq.L.2} and \eqref{eq.R.2} in the Appendix, we get that \eqref{eq.LR.2} is equivalent to
\begin{equation}\label{eq.task.2}
    B^{2s(\ell - \ell') - u\ell'}
    = A^{2u(2a\ell' + 2a\ell - 2b\ell' + 4b\ell + 2i\ell + 3\ell + \ell') + ur\ell - us\ell(\ell' - 1) + 2us}.
\end{equation}

Since $\alpha\equiv\alpha'\mod u$ and $\alpha\not\equiv\alpha'\mod 2u$, this is, $\ell\equiv\ell'\mod r$ and $\ell\not\equiv\ell'\mod s$, then $\ell' = \ell - r - s\ell'_1$ for some $\ell'_1\in\Z$ and
\[
B^{2s(\ell - \ell') - u\ell'}
= B^{u(1 - \ell) + 2u\ell'_1 + ur + us\ell'_1}
= A^{u(\ell - 1) - 2u\ell'_1 - ur - us\ell'_1}.
\]

Replacing the above in \eqref{eq.task.2} produces
\[
A^{u(1 - \ell) + 4u\ell(i + 2a + b + 2) + 2u\ell'_1 + ur(\ell + 1) + us(\ell + \ell'_1) + usr + 2us(a + b + \ell'_1)}
= 1,
\]
and from this
\begin{equation}\label{cong.LR.1}
    u(1 - \ell) + 4u\ell(i + 2a + b + 2) + 2u\ell'_1 + ur(\ell + 1) + us(\ell + \ell'_1) + usr + 2us(a + b + \ell'_1)\equiv 0\mod 4us.
\end{equation}

In the other hand,
\[
A^{2s(\ell - \ell') + u\ell} C^{sr(\ell + \ell') + u(\overline{r} + 1)}
= A^{u(\ell + 1) + 2u\ell'_1} C^{u(\ell + 1) + ur\ell'_1}
= A^{u(\ell + 1) + 2u\ell'_1 + us(\ell + 1) + usr\ell'_1}.
\]


Replacing the above in \eqref{eq.task.1} produces
\[
A^{-u(\ell + 1)  - 2u\ell'_1 - 4u\ell(i + 2j + b + 1) - ur(\ell - 1) + us(\overline{r} - \ell + \ell'_1 + 1) + usr(\overline{r} + 1) + u\varphi(2s\ell') + 2us(a + b + \ell'_1 + 1)}
= 1,
\]
and from this
\begin{equation}\label{cong.LR.2}
    \begin{split}
        &-u(\ell + 1)  - 2u\ell'_1 - 4u\ell(i + 2j + b + 1) - ur(\ell - 1) + us(\overline{r} - \ell + \ell'_1 + 1)\\
        &\qquad + usr(\overline{r} + 1) + u\varphi(2s\ell') + 2us(a + b + \ell'_1 + 1)\equiv 0\mod 4us.
    \end{split}
\end{equation}

Subtracting \eqref{cong.LR.2} from \eqref{cong.LR.1} produces
\[
2u + 4u\ell(2i + 2j + 2a + 2b + 3) + 4u\ell'_1 + us(\ell - \overline{r} - 1) + usr\overline{r} + u\varphi(2s\ell')
\equiv0\mod 4us.
\]

Dividing this by $2u$ and reducing modulo 2 yields
\[
1 + 2\ell(2i + 2j + 2a + 2b + 3) + 2\ell'_1 + r(\ell - \overline{r} - 1) + s\overline{r}^2 + \varphi(2s\ell')/2
\equiv0\mod 2,
\]
which is impossible.

\end{proof}

\section{Common structural features of $J_1(\alpha)$ and $J_1(\alpha')$}

We assume throughout this section that we are in Case 1.

\begin{theorem}\label{csf1} All proper terms and all factors of the upper central series of $J_1(\alpha)$ and $J_1(\alpha')$ are isomorphic.
\end{theorem}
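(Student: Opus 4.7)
The plan is to make explicit the terms of the upper central series of $J_1(\alpha)$ using the work already done in \cite{MS}, and then verify one by one that each proper term and each factor has an isomorphism type determined solely by $p$ and $m$, so that it is independent of $\ell$.

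Setting $C=[A,B]$, I would first recall from \cite[Sections~8--9]{MS} the explicit generating sets for the $Z_i=Z_i(J_1(\alpha))$, which, by analogy with Proposition~\ref{cenj} for $J_2(\alpha)$, take the form
\[
Z_1=\langle A^{p^{2m}}\rangle,\quad
Z_2=\langle A^{p^{2m}},C^{p^{m}}\rangle,\quad
Z_3=\langle A^{p^{m}},B^{p^{m}},C^{p^{m}}\rangle,\quad
Z_4=\langle A^{p^{m}},B^{p^{m}},C\rangle,
\]
(or the precise version thereof in \cite{MS}), together with $Z_5=J_1(\alpha)$. Notice that the exponents of $A$, $B$, $C$ appearing in these generators are pure powers of $p$, not involving $\ell$.

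Next, for each proper term $Z_i$ with $i\in\{1,2,3,4\}$ I would compute its isomorphism type as an abstract $p$-group. The key observation is that each such $Z_i$ is abelian: any commutator of two generators of $Z_i$ lies in $[Z_i,J_1(\alpha)]\subseteq Z_{i-1}$, and a direct calculation using the commutator identities $A^C=A^\alpha$, $B^{C^{-1}}=B^\alpha$ together with the fundamental relation $A^{p^{2m}}B^{p^{2m}}=1$ shows that among the specific generators chosen, these commutators reduce to the identity (the factor $\alpha-1$ arising in each commutation is divisible by a sufficiently high power of $p$ to annihilate the element involved). Once abelianness is in hand, each $Z_i$ decomposes as a direct product of cyclic groups whose orders are specific powers of $p$ depending only on $p$ and $m$, giving $Z_i(J_1(\alpha))\cong Z_i(J_1(\alpha'))$. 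For each factor $Z_{i+1}/Z_i=Z(J_1(\alpha)/Z_i)$ (abelian by definition), the same analysis yields an elementary divisor decomposition independent of $\ell$. The top factor $Z_5/Z_4$ is generated by the images of $A$ and $B$, whose orders and commutativity modulo $Z_4$ are again controlled purely by $p$ and $m$.

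The main obstacle is the delicate bookkeeping needed to verify abelianness and to determine the exact orders of the chosen generators, both in $Z_i$ itself and modulo $Z_i$. This amounts to a careful application of the collection formula together with the $\alpha-1$ divisibility estimates established in \cite{MS}. If any $Z_i$ should turn out to be nonabelian, one instead writes down a presentation by generators and relations and checks that the structure constants are $\ell$-free; the map sending generator to corresponding generator then yields an isomorphism $Z_i(J_1(\alpha))\to Z_i(J_1(\alpha'))$. Either way, once the structure is pinned down the required isomorphisms of proper terms and of factors follow by matching elementary divisors (or matching presentations).
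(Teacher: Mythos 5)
Your outline matches the paper's strategy for $Z_1$, $Z_2$, $Z_3$ and for all the factors $Z_{i+1}/Z_i$: those terms are indeed abelian, and each factor is a quotient of an explicit abelian group of the same order, so its type is forced. The gap is at $Z_4$, which is where all the actual difficulty sits, and your proposal handles it incorrectly. With $C=[A,B]$ one has $Z_4=\langle A^{p^m},B^{p^m},C\rangle$, and since $A^C=A^\alpha$ with $A$ of order $p^{3m}$, the commutator $[A^{p^m},C]=A^{p^{2m}\ell}$ is a nontrivial element of $Z_1\cong\Z/p^m\Z$. So $Z_4$ is \emph{not} abelian, and your claimed ``the factor $\alpha-1$ arising in each commutation is divisible by a sufficiently high power of $p$ to annihilate the element involved'' fails here. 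Worse, your fallback also fails: the structure constants of $Z_4$ are \emph{not} $\ell$-free. The paper exhibits $Z_4(J_1(\alpha))$ as generated by $X,Y,Z_0$ with defining relations including $X^{Z_0}=X^\alpha$ and $Y^{Z_0}=Y^{2-\alpha}$, while $Z_4(J_1(\alpha'))$ satisfies $x^{z_0}=x^{\alpha'}$, $y^{z_0}=y^{2-\alpha'}$. The generator-to-generator map $X\mapsto x$, $Y\mapsto y$, $Z_0\mapsto z_0$ does not preserve these relations when $\alpha\not\equiv\alpha'\bmod p^{2m}$, which is exactly the situation the theorem is meant to cover (otherwise Theorem~\ref{suf} already gives $J_1(\alpha)\cong J_1(\alpha')$).

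The missing idea is the following: $\alpha$ and $\alpha'$ both have order $p^m$ modulo $p^{2m}$, hence generate the same cyclic subgroup of $(\Z/p^{2m}\Z)^\times$, so $\alpha\equiv(\alpha')^i\bmod p^{2m}$ for some $i$ prime to $p$. One then sends $X\mapsto x$, $Y\mapsto y$, and $Z_0\mapsto z_0^{\,i}$ (a \emph{power} of the corresponding generator, not the generator itself), and checks that this respects all the relations. Without this twist on the third generator there is no isomorphism of presentations, and your argument does not produce $Z_4(J_1(\alpha))\cong Z_4(J_1(\alpha'))$. Everything else in your plan is sound, but this step is the substance of the theorem and needs to be supplied.
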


\begin{proof} Set $J=J_1(\alpha)$, so that
$J=\langle A,B\,|\, A^{[A,B]}=A^{\alpha},\, B^{[B,A]}=B^{\alpha}, A^{p^{3m}}=B^{p^{3m}}=1\rangle$. Then
$$
Z_1(J)=\langle A^{p^{2m}}\rangle, Z_2(J)=\langle A^{p^{2m}},C^{p^{m}}\rangle,
Z_3(J)=\langle  A^{p^{m}}, B^{p^{m}}, C^{p^{m}}\rangle,
$$
$$
Z_4(J)=\langle  A^{p^{m}}, B^{p^{m}}, C\rangle, Z_5(J)=J,
$$
by \cite[Theorem 8.1]{MS}. Here $Z_1(J)\cong\Z/p^m\Z$ by
\cite[Proposition 7.2]{MS}. Since $Z_3(J)$ is abelian by \cite[Proposition 9.1]{MS}, we infer
$Z_2(J)\cong\Z/p^m\Z\times \Z/p^m\Z$ and $Z_2(J)/Z_1(J)\cong\Z/p^m\Z$ by
\cite[Proposition 7.2]{MS}. In the proof of \cite[Theorem 11.1]{MS} one constructs a group
$\langle X_0,Y_0\rangle$ isomorphic to $J$ via $A\mapsto X_0$, $B\mapsto Y_0$. Under this isomorphism
$Z_3(J)$ corresponds to a subgroup of order $p^{4m}$ generated by elements $X,Y,Z$ subject to the
defining relations:
$$
XY=YX,\; YZ=ZY,\; XZ=ZX,\; X^{p^{2m}}=1, X^{p^{m}}Y^{p^{m}}=1,\;  Z^{p^{m}}=1.
$$
In particular, the isomorphism type of $Z_3(J)$ is independent of $\ell$. In view of the fundamental
relation $A^{p^{2m}}B^{p^{2m}}=1$, it follows that $Z_3(J)/Z_2(J)$ is a quotient of $\Z/p^m\Z\times \Z/p^m\Z$.
But $Z_3(J)/Z_2(J)$ has order $p^{2m}$, so $Z_3(J)/Z_2(J)\cong\Z/p^m\Z\times \Z/p^m\Z$. As revealed by the proof of \cite[Theorem 11.1]{MS},
the isomorphism
$J\to \langle X_0,Y_0\rangle$ sends $Z_4(J)$ onto a subgroup of order $p^{5m}$ generated by elements $X,Y,Z_0$ subject to the defining relations
$$
X^{Z_0}=X^\alpha,\; Y^{Z_0}=Y^{\beta},\; XY=YX,\; X^{p^{2m}}=1,\; X^{p^{m}}Y^{p^{m}}=1,
Z_0^{p^{2m}}=1,
$$
where $\beta=2-\alpha$ is the inverse of $\alpha$ modulo $p^{2m}$. Likewise, $Z_4(J(\alpha'))$ has order $p^{5m}$ and
is generated by elements $x,y,z_0$ subject to the defining relations
$$
x^{z_0}=x^{\alpha'},\; y^{z_0}=y^{\beta'},\; xy=yx,\; x^{p^{2m}}=1,\; x^{p^{m}}y^{p^{m}}=1,
z_0^{p^{2m}}=1,
$$
where $\beta'=2-\alpha'$ is the inverse of $\alpha'$ modulo $p^{2m}$. Now $\alpha$ and $\alpha'$ have the same order $p^m$ modulo $p^{2m}$,
so $\alpha\equiv (\alpha')^i\mod p^{2m}$ for some $i>0$ relatively prime to $p$. It follows that the assignment
$X\mapsto x$, $Y\mapsto y$, $Z_0\mapsto z_0^i$ extends to a group isomorphism $\langle X,Y,Z_0\rangle\to \langle x,y,z_0\rangle$,
so $Z_4(J(\alpha))\cong Z_4(J(\alpha'))$.  It is clear that $Z_4(J)/Z_3(J)$ is a quotient of $\Z/p^m\Z$.
But $Z_4(J)/Z_3(J)$ has order $p^m$, so $Z_4(J)/Z_3(J)\cong\Z/p^m\Z$. It is also clear that
$J/Z_4(J)$ and $\Z/p^m\Z\times \Z/p^m\Z$ are quotients of each other, so $J/Z_4(J)\cong \Z/p^m\Z\times \Z/p^m\Z$.
\end{proof}

\section{Common structural features of $J_2(\alpha)$ and $J_2(\alpha')$}

We assume throughout this section that we are in Case 2.

\begin{theorem}\label{csf2} The derived subgroups and all factors of the upper central series of $J_2(\alpha)$ and $J_2(\alpha')$ are isomorphic.
\end{theorem}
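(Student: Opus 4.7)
The plan is to follow the approach of Theorem \ref{csf1}. Set $J = J_2(\alpha)$ and $C = [A, B]$. By Proposition \ref{cenj}, the nontrivial proper terms of the upper central series are
\[
Z_1 = \langle A^{2u}\rangle,\; Z_2 = \langle A^{2u}, C^s\rangle,\; Z_3 = \langle A^{2s}, B^{2s}, C^s\rangle,\; Z_4 = \langle A^s, B^s, C\rangle,
\]
with $Z_3$ abelian and $Z_5 = J$. For $m \leq 2$, Theorem B provides a single isomorphism class of $J_2(\alpha)$, so the statement is automatic; hence I may assume $m \geq 3$.

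First I would observe, exactly as in Theorem \ref{csf1}, that $\alpha$ and $\alpha'$ generate the same cyclic subgroup of $(\Z/2^{3m-1}\Z)^\times$: both have order $2^{2m-1}$ there and both lie in the cyclic subgroup generated by $5$, so there is an odd integer $i$ with $\alpha' \equiv \alpha^i \pmod{2^{3m-1}}$. Using the fundamental relations $A^{2u}B^{2u} = 1$ and $A^{2us} = B^{2us} = C^{2u}$ from \cite[Section 6]{MS}, together with the exact orders supplied by \cite[Proposition 7.2]{MS}, each of $Z_1$, $Z_2/Z_1$, $Z_3/Z_2$ and $J/Z_4$ acquires an abelian presentation involving only $2$-powers and the fundamental relations, hence is $\ell$-independent. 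For the delicate factor $Z_4/Z_3$, I would construct a Macdonald-style presentation of $Z_4$ analogous to the one used in Theorem \ref{csf1}: generators $X, Y, Z_0$ matching $A^s, B^s, C$, subject to $X^{Z_0} = X^\alpha$, $Y^{Z_0} = Y^{2 - \alpha}$, $XY = YX$, plus the appropriate order and fundamental relations. The assignment $X \mapsto x,\ Y \mapsto y,\ Z_0 \mapsto z_0^i$ then yields $Z_4(J_2(\alpha)) \cong Z_4(J_2(\alpha'))$, which in turn gives the claim for $Z_4/Z_3$.

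For the derived subgroup, since $[A, C] = A^{\alpha - 1} = A^{2^m \ell}$ with $\ell$ odd, and symmetrically $[B, C] = B^{-2^m \ell}$, the subgroup $\langle A^{2s}, B^{2s}, C\rangle$ contains all basic commutators, and the quotient $J/\langle A^{2s}, B^{2s}, C\rangle$ is visibly abelian. Hence $[J, J] = \langle A^{2s}, B^{2s}, C\rangle$ for $m \geq 2$. This subgroup sits inside $Z_4(J)$ and admits a presentation of the same Macdonald-type shape as $Z_4(J)$, with $A^s, B^s$ replaced by $A^{2s}, B^{2s}$. Applying the same substitution $Z \mapsto Z^i$ produces the desired isomorphism $[J_2(\alpha), J_2(\alpha)] \cong [J_2(\alpha'), J_2(\alpha')]$.

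The principal obstacle is rigorously establishing these Macdonald-style presentations of $Z_4(J)$ and $[J, J]$, that is, checking that the listed relations are a complete set (no hidden relation forces additional collapse). This amounts to extending the construction of \cite[Theorem 11.1]{MS} from Case 1 to Case 2 and carefully bookkeeping the $2$-adic contributions; once the presentations are in hand, the substitution $\alpha \mapsto \alpha^i$ is a routine verification.
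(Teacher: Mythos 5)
Your treatment of $Z_1$, $Z_2/Z_1$, $Z_3/Z_2$, $J/Z_4$ and of the derived subgroup matches the paper's: the factors are handled by order-counting against an $\ell$-independent abelian quotient, and $[J,J]=\langle A^{2s},B^{2s},C\rangle$ is given a presentation with $[X,Y]=1$ (legitimate, since $A^{2s},B^{2s}$ lie in the abelian $Z_3$), $X^Z=X^\alpha$, $Y^Z=Y^{2-\alpha}$, plus order relations, after which $Z\mapsto Z^i$ with $\alpha\equiv(\alpha')^i$ does the job. That part is sound.

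The genuine gap is your route to $Z_4/Z_3$. You propose to prove the stronger statement $Z_4(J_2(\alpha))\cong Z_4(J_2(\alpha'))$ via a presentation on generators matching $A^s,B^s,C$ that includes the relation $XY=YX$. But $A^s$ and $B^s$ do \emph{not} commute in $J_2(\alpha)$: only $Z_3=\langle A^{2s},B^{2s},C^s\rangle$ is abelian, and $[A^s,B^s]$ is a nontrivial element involving a power of $C$ of the form $C^{u(1+r)}$ (times a central power of $A$). So your proposed presentation is not a presentation of $Z_4$. Worse, once the correct commutator relation is inserted, the substitution $Z\mapsto Z^i$ no longer preserves it for an arbitrary odd $i$ (it forces a congruence such as $i\equiv 1\bmod 4$), so this strategy cannot establish $Z_4(J_2(\alpha))\cong Z_4(J_2(\alpha'))$ in general. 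This is exactly why, in contrast with Theorem A, Theorem B claims only the factors of the upper central series and the derived subgroup for $J_2$, not the proper terms themselves. The fix is to treat $Z_4/Z_3$ the same way you treat the other factors: it is an abelian group of order $2^{m+1}$ generated by the images of $A^s,B^s,C$, whose orders are bounded by the relations $A^{2s},B^{2s},C^s\in Z_3$, and the resulting quotient description is independent of $\ell$; no statement about $Z_4$ itself is needed. A minor further point: invoking Theorem B to dispose of $m\le 2$ is formally circular, since Theorem B's proof cites the present result; you should instead cite Theorem \ref{suf} for $m=1$ and Theorem \ref{necj2} for $m=2$ (or simply run the general argument for all $m\ge 2$, as the paper does).
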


\begin{proof} If $m=1$ then $J_2(\alpha)\cong J_2(\alpha')$ by Theorem \ref{suf}, so we assume $m>1$.

Set $J=J_2(\alpha)$, so that
$J=\langle A,B\,|\, A^{[A,B]}=A^{\alpha},\, B^{[B,A]}=B^{\alpha}, A^{2^{2m-1}}=B^{2^{2m-1}}=1\rangle$. The terms
of the upper central series of $J$ are given in Proposition \ref{cenj}. Here $Z_1(J)\cong\Z/2^m\Z$ by
\cite[Proposition 7.2]{MS}. Since $Z_3(J)$ is abelian by \cite[Proposition 9.2]{MS}, we infer
$Z_2(J)\cong\Z/2^m\Z\times \Z/2^m\Z$ and $Z_2(J)/Z_1(J)\cong\Z/2^m\Z$ by \cite[Theorem 7.1]{MS},
\cite[Proposition 7.2]{MS}, and the fundamental relation $A^{2^{3m-2}}=C^{2m-1}$. In \cite[Theorem 11.2]{MS} one constructs a
group $\langle x_0,y_0\rangle$ isomorphic to $J$ via $A\mapsto x_0$, $B\mapsto y_0$. Under this isomorphism
$Z_3(J)$ corresponds to a subgroup of order $2^{4m-2}$ generated by elements $x,y,z$ subject to the
defining relations
\[
    xy=yx,\;
    xz=zx,\;
    yz=zx,\;
    z^{2^m}=x^{2^{2m-2}},\;
    x^{2^{m-1}}y^{2^{m-1}}=1,\;
    x^{2^{2m-1}}=1.
    \]
In particular, the isomorphism type of $Z_3(J)$ is independent of $\ell$. In view of the fundamental
relation $A^{2^{2m-1}}B^{2^{2m-1}}=1$, it follows that $Z_3(J)/Z_2(J)$ is a quotient of $\Z/2^{m-1}\Z\times \Z/2^{m-1}\Z$.
But $Z_3(J)/Z_2(J)$ has order $2^{2(m-1)}$, so $Z_3(J)/Z_2(J)\cong \Z/2^{m-1}\Z\times \Z/2^{m-1}\Z$.

It is clear that $J/Z_4(J)$ and $\Z/2^{m-1}\Z\times \Z/2^{m-1}\Z$ are quotients of each other, so
$J/Z_4(J)\cong\Z/2^{m-1}\Z\times \Z/2^{m-1}\Z$. Since $|J|=2^{7m-3}$, it follows that $|Z_4(J)|=2^{5m-1}$.
It is also clear that $Z_4(J)/Z_3(J)$ is a quotient of $\Z/2\Z\times\Z/2\Z\times\Z/2^m\Z$.
But $Z_4(J)/Z_3(J)$ has order $2^{m+1}$, so $Z_4(J)/Z_3(J)\cong \Z/2\Z\times\Z/2\Z\times\Z/2^m\Z$.

Set $s=2^{m-1}$, $u=s^2$, and $r=s/2$. By \cite[Theorem 8.1]{MS}, we have $[J,J]=\langle A^{2s},B^{2s},C\rangle$.
Here $J/[J,J]$ and $\Z/2^{m}\Z\times \Z/2^{m}\Z$ are quotients of each other, so
$J/[J,J]\cong\Z/2^{m}\Z\times \Z/2^{m}\Z$, whence $|[J,J]|=2^{5m-3}$. Consider the group
\[
U=\langle X,Y,Z\,|\,
    [X,Y]=1,\;
    X^Z=X^\alpha,\;
    Y^Z=Y^{\beta},\;
    Z^{2u}=X^{u},\;
    X^{s}Y^{s}=1,\;
    X^{2u}=1\rangle,
 \]
where $\beta=2-\alpha$ is the inverse of $\alpha$ modulo $2u$.
It is not difficult to see that $|U|\leq 2^{5m-3}$. Since $Z_3(J)$ is abelian, we have $[A^{2s},B^{2s}]=1$.
It follows that the map $X\mapsto A^{2s}$, $Y\mapsto B^{2s}$, $Z\mapsto C$ extends to an isomorphism $U\to [J,J]$.
Likewise, $Z_4(J(\alpha'))$ is isomorphic to
$$
V=
\langle a,b,c\,|\,
    [a,b]=1,\;
    a^c=a^{\alpha'},\;
    b^c=b^{\beta'},\;
    c^{2u}=a^{u},\;
    a^{s}b^{s}=1,\;
    a^{2u}=1\rangle,
		$$
where $\beta'=2-\alpha'$ is the inverse of $\alpha'$ modulo $2u$.
Now $\alpha$ and $\alpha'$ have the same order $s$ modulo~$2u$, so as explained at the beginning of Section \ref{necfh2},
$\alpha\equiv (\alpha')^i\mod 2u$ for some odd $i>0$. It follows that the assignment
$X\mapsto a$, $Y\mapsto b$, $Y\mapsto c^i$ extends to a group isomorphism $U\to V$,
so $[J(\alpha),J(\alpha)]\cong [J(\alpha'),J(\alpha')]$.
\end{proof}


\section{Proof of Theorems A through E}

We begin with following observations: Theorem A follows from Theorems \ref{suf}, \ref{exel}, \ref{csf1}, and Proposition \ref{suf2}.
Theorem B is consequence of Theorems \ref{suf}, \ref{necj2}, \ref{h2}, \ref{necj22}, \ref{csf2}, and Proposition~\ref{isoK}.
Theorem C is contained in Theorem \ref{suf}. Regarding Theorem E, if $m=1$ or $m=2$ we appeal to Theorem B, while
if $m>2$, we resort to Theorem \ref{h2}.

We proceed to prove Theorem D. It is shown in \cite{M} that $G(\alpha)$ is infinite if and only if $\alpha=1$,
and that $G(\alpha)$ is trivial if and only if $\alpha=0$ or $\alpha=2$. Thus,
we may assume in what follows that $\alpha$ and $\beta$ are different and do not belong to $\{0,1,2\}$.

By \cite[Theorem 10.1]{MS}, the order of $G(\alpha)$ is wholly dependent on the prime factorization of $\alpha-1$.
Thus, if $G(\alpha)\cong G(\beta)$, then necessarily $\beta=2-\alpha$.

Suppose that $G(\alpha)\cong G(2-\alpha)$. In Case 1, $J_1(\alpha)\cong J_1(2-\alpha)$ and therefore $H_1(\alpha)\cong H_1(2-\alpha)$,
so $\alpha\equiv 2-\alpha\mod p^{2m}$, that is, $2(\alpha-1)\equiv 0\mod p^{2m}$, by Theorem A. But $v_{p}(\alpha-1)=m$,
so Case~1 does not occur. In Case 3, $J_3(\alpha)\cong J_3(2-\alpha)$, where
$\alpha=1+3\ell$, $2-\alpha=1-3\ell$ and $\gcd(3,\ell)=1$. If $\ell\equiv 1\mod 3$ then $-\ell\equiv -1\mod 3$, so
$|J_3(\alpha)|=3^7$ and $|J_3(2-\alpha)|=3^{10}$ by \cite[Theorem 10.1]{MS}, while if $\ell\equiv -1\mod 3$ then $-\ell\equiv 1\mod 3$, so
$|J_3(\alpha)|=3^{10}$ and $|J_3(2-\alpha)|=3^{7}$ by \cite[Theorem 10.1]{MS}. Thus, Case 3 does not occur. Therefore, Case 2 must occur,
$J_2(\alpha)\cong J_2(2-\alpha)$, with $\alpha=1+2^m$ or $\alpha=1-2^m$, so that $\alpha-(2-\alpha)=\pm 2^{m+1}$. It follows form
Theorem B that $m=1$ or $m=2$, that is, $\alpha\in\{-3,-1,3,5\}$.

Conversely, if $\alpha\in\{-3,-1,3,5\}$, then Theorem B ensures that $G(\alpha)\cong G(\beta)$.

Suppose next that $v_3(\alpha-1)\neq 1$. Then $v_3((2-\alpha)-1)\neq 1$, so \cite[Theorem 10.1]{MS} ensures that
$G(\alpha)$ and $G(2-\alpha)$ have the same order. Moreover, Theorems A and B, together with \cite[Theorem 8.1]{MS},
ensure that the derived subgroups and all factors of the upper central series of $G(\alpha)$ and $G(2-\alpha)$ are isomorphic.



\section{Appendix}

We proceed to calculate both sides of \eqref{eq.LR.1} and \eqref{eq.LR.2}. We maintain the notation of Section \ref{siete}.

Applying \cite[Theorem 3.5]{MS3} to
$(AB^r)^{1+s} = A^{expA} B^{expB} C^{expC} A^{\xi}$ produces
$expA\equiv 1 + s + u\varphi(s)\ell + usr - ur\ell\mod 4us$,
$expB\equiv r + sr + u\overline{r}\ell - ur\overline{r}\ell\mod 4us$,
$expC\equiv -s\overline{r} + sr\varphi(s) + u\overline{r}(\ell - 1)\mod 4u$,
$\xi\equiv ur\varphi(s) + usr\mod 4us$, so
\begin{equation}\label{eq.app.1}
    (AB^r)^{1+s}
    = A^{1 + s} B^{r + sr + u\overline{r}\ell} C^{-s\overline{r}} A^{u\varphi(s)\ell + ur\ell(\overline{r} - 1) + us\overline{r}(\ell - 1)}.
\end{equation}

Applying \cite[Theorem 3.2]{MS3} to
$(A^{1 + s} B^{r + sr + u\overline{r}\ell} C^{-s\overline{r}}) (A^{2si} B^{2sj} C^0) = A^{expA} B^{expB} C^{expC} A^{\xi}$ produces
$expA\equiv 1 + s + 2si - us\ell i\mod 4us$,
$expB\equiv r + sr + 2sj + u\overline{r}\ell + usri + us\ell i\mod 4us$,
$expC\equiv -s\overline{r} - ui + uri\mod 4u$, $\xi\equiv usri\mod 4us$, so
\begin{equation}\label{eq.app.2}
    (A^{1 + s} B^{r + sr + u\overline{r}\ell} C^{-s\overline{r}}) (A^{2si} B^{2sj} C^0)
    = A^{1 + s + 2si} B^{r + sr + 2sj + u\overline{r}\ell} C^{-s\overline{r} - ui} A^{2usi + usri}.
\end{equation}

By \eqref{eq.app.1} and \eqref{eq.app.2},
\begin{equation}\label{eq.imp.1}
    \begin{split}
        (AB^r)^{1+s} A^{2si} B^{2sj}
        &= (A^{1 + s} B^{r + sr + u\overline{r}\ell} C^{-s\overline{r}}) (A^{2si} B^{2sj} C^0) A^{u\varphi(s)\ell + ur\ell(\overline{r} - 1) + us\overline{r}(\ell - 1)}\\
        &= A^{1 + s + 2si} B^{r + sr + 2sj + u\overline{r}\ell} C^{-s\overline{r} - ui} A^{ur\ell(\overline{r} - 1) + u\varphi(s)\ell + us\overline{r}(\ell - 1) + usri + 2usi}.
    \end{split}
\end{equation}

Applying \cite[Theorem 3.5]{MS3} to
$(AB^r)^s = A^{expA} B^{expB} C^{expC} A^{\xi}$ produces
$expA\equiv s + u\varphi(s)\ell\mod 4us$,
$expB\equiv sr + ur\overline{r}\ell - u\overline{r}\ell\mod 4us$,
$expC\equiv -u\overline{r} + s\overline{r} + sr\varphi(s) + ur\overline{r}\mod 4u$,
$\xi\equiv ur\varphi(s) + us\overline{r}\ell^2\mod 4us$, so
\begin{equation}\label{eq.app.3}
    (AB^r)^s
    = A^s B^{sr - u\overline{r}\ell} C^{-u\overline{r} + s\overline{r}} A^{u\varphi(s)\ell - ur\overline{r}\ell + usr\overline{r} + us\overline{r}\ell^2}.
\end{equation}

Applying \cite[Theorem 3.2]{MS3} to
$(A^0 B^{1+r} C^0) (A^s B^{sr - u\overline{r}\ell} C^{-u\overline{r} + s\overline{r}}) = A^{expA} B^{expB} C^{expC} A^{\xi}$ produces
$expA\equiv s - u\ell - ur\ell + us\ell + usr\mod 4us$,
$expB\equiv 1 + r + sr - u\overline{r}\ell - ur\ell - us\overline{r}\ell\mod 4us$,
$expC\equiv s(\overline{r} - 1) - sr + u(\overline{r}\ell + \ell - \overline{r}) + ur(\overline{r} + 1)\mod 4u$,
$\xi\equiv us\ell^2(\overline{r} + 1) + usr\overline{r} + 2us\mod 4us$, so
\begin{equation}\label{eq.app.4}
    \begin{split}
        &(A^0 B^{1+r} C^0) (A^s B^{sr - u\overline{r}\ell} C^{-u\overline{r} + s\overline{r}})\\
        &= A^{s - u\ell} B^{1 + r + sr - u\overline{r}\ell} C^{s(\overline{r} - 1) - sr + u(\overline{r}\ell + \ell - \overline{r})} A^{us(\overline{r}\ell + \overline{r} + \ell + 1) + 2us}.
    \end{split}
\end{equation}

By \eqref{eq.app.3} and \eqref{eq.app.4},
\begin{equation}\label{eq.app.5}
    \begin{split}
        &B^{1+r} (AB^r)^s\\
        &\quad = (A^0 B^{1+r} C^0) (A^s B^{sr - u\overline{r}\ell} C^{-u\overline{r} + s\overline{r}}) A^{u\varphi(s)\ell - ur\overline{r}\ell + usr\overline{r} + us\overline{r}\ell^2}\\
        &\quad = A^{s - u\ell} B^{1 + r + sr - u\overline{r}\ell} C^{s(\overline{r} - 1) - sr + u(\overline{r}\ell + \ell - \overline{r})} A^{-ur\overline{r}\ell + us(\overline{r}\ell + \ell + 1) + usr(\overline{r} + 1) + u\varphi(s)\ell + 2us}.
    \end{split}
\end{equation}

Applying \cite[Theorem 3.2]{MS3} to
\[
(A^{s - u\ell} B^{1 + r + sr - u\overline{r}\ell} C^{s(\overline{r} - 1) - sr + u(\overline{r}\ell + \ell - \overline{r})}) (A^{2sa} B^{2sb} C^0)
= A^{expA} B^{expB} C^{expC} A^{\xi}
\]
produces
$expA\equiv s + 2sa - u\ell - 2u\ell a - us\ell a\mod 4us$,
$expB\equiv 1 + r + sr + 2sb - u\overline{r}\ell - us\ell a + usra\mod 4us$,
$expC\equiv s(\overline{r} - 1) - 2sa - sr + u(\overline{r}\ell + \ell - \overline{r} - a) + 2ua + ura\mod 4u$,
$\xi\equiv usra + 2usa\mod 4us$, so
\begin{equation}\label{eq.app.6}
    \begin{split}
        &(A^{s - u\ell} B^{1 + r + sr - u\overline{r}\ell} C^{s(\overline{r} - 1) - sr + u(\overline{r}\ell + \ell - \overline{r})}) (A^{2sa} B^{2sb} C^0)\\
        &= A^{s + 2sa - u\ell} B^{1 + r + sr + 2sb - u\overline{r}\ell} C^{s(\overline{r} - 1) - 2sa - sr + u(\ell - a)} A^{-2u\ell a + us\overline{r}(\ell - 1) + usra}.
    \end{split}
\end{equation}

By \eqref{eq.app.5} and \eqref{eq.app.6},
\begin{equation}\label{eq.imp.2}
    \begin{split}
        B^{1+r} (AB^r)^s A^{2sa} B^{2sb}
        &= (A^{s - u\ell} B^{1 + r + sr - u\overline{r}\ell} C^{s(\overline{r} - 1) - sr + u(\overline{r}\ell + \ell - \overline{r})}) (A^{2sa} B^{2sb} C^0)\\
        &\qquad A^{-ur\overline{r}\ell + us(\overline{r}\ell + \ell + 1) + usr(\overline{r} + 1) + u\varphi(s)\ell + 2us}\\
        &= A^{s + 2sa - u\ell} B^{1 + r + 2sb + sr - u\overline{r}\ell} C^{s(\overline{r} - 1) - 2sa - sr + u(\ell - a)}\\
        &\qquad A^{-2u\ell a - ur\overline{r}\ell + us(\ell - \overline{r} + 1) + u\varphi(s)\ell + usr(\overline{r} + a) + 2us}.
    \end{split}
\end{equation}

Applying \cite[Theorem 3.4]{MS3} to
\begin{align*}
    &[(AB^r)^{1+s} A^{2si} B^{2sj},B^{1+r} (AB^r)^s A^{2sa} B^{2sb}]\\
    &= [A^{1 + s + 2si} B^{r + sr + 2sj + u\overline{r}\ell} C^{-s\overline{r} - ui},
    A^{s + 2sa - u\ell} B^{1 + r + 2sb + sr - u\overline{r}\ell} C^{s(\overline{r} - 1) - 2sa - sr + u(\ell - a)}]
\end{align*}
produces
$expA\equiv u\mod 2u$,
$expB\equiv sr\ell + u(\overline{r} + 1)\mod 2u$,
$expC\equiv 1 + r + s + 2s(i + b) - sr + u(\overline{r} + i + a + 1)\mod 2u$, so
\begin{equation}\label{cong.comm.1}
    \begin{split}
        &[(AB^r)^{1+s} A^{2si} B^{2sj},B^{1+r} (AB^r)^s A^{2sa} B^{2sb}]\\
        &\equiv A^u B^{sr\ell + u(\overline{r} + 1)} C^{1 + r + s - sr + 2s(i + b) + u(\overline{r} + i+ a + 1)}\mod Z_1(J).
    \end{split}
\end{equation}

Repeated applications of \cite[Theorem 3.1]{MS3} produce\\
$[A^u,B^{r + sr + 2sj + u\overline{r}\ell}] = A^{usr}$,\\
$[C^{-s\overline{r} - ui},A^u] = 1$,\\
$[A^{1 + s + 2si},B^{sr\ell + u(\overline{r} + 1)}] = C^{sr\ell + u(\overline{r} + 1)} A^{ur\ell^2 + us\ell(\overline{r} + 1) + usr}$,\\
$[C^{-s\overline{r} - ui},B^{sr\ell + u(\overline{r} + 1)}] = 1$,\\
$[C^{sr\ell + u(\overline{r} + 1)},B^{r + sr + 2sj + u\overline{r}\ell}] = B^{usr}$,\\
$[C^{1 + r + s - sr + 2s(i + b) + u(\overline{r} + i+ a + 1)},A^{1 + s + 2si}] = A^{-2s\ell - u\ell} A^{-4u\ell(2i + b + 1) - us\ell^2 + 2us(\overline{r} + a + 1) + usr}$,\\
$[C^{1 + r + s - sr + 2s(i + b) + u(\overline{r} + i+ a + 1)},B^{r + sr + 2sj + u\overline{r}\ell}] = B^{u\ell} B^{4u\ell j + ur\ell + 2us(\overline{r} + i + j + b) + usr}$,\\
which all together yield
\begin{align*}
    &((AB^r)^{1+s} A^{2si} B^{2sj})^{[(AB^r)^{1+s} A^{2si} B^{2sj},B^{1+r} (AB^r)^s A^{2sa} B^{2sb}]}\\
    &\quad\equiv (A^{1 + s + 2si} B^{r + sr + 2sj + u\overline{r}\ell} C^{-s\overline{r} - ui})^{A^u B^{sr\ell + u(\overline{r} + 1)} C^{1 + r + s - sr + 2s(i + b) + u(\overline{r} + i+ a + 1)}}\\
    &\quad\equiv (A^{1 + s + 2si} C^{sr\ell + u(\overline{r} + 1)} B^{r + sr + 2sj + u\overline{r}\ell} C^{-s\overline{r} - ui})^{C^{1 + r + s - sr + 2s(i + b) + u(\overline{r} + i+ a + 1)}}\\
    &\quad\equiv (A^{1 + s + 2si} B^{r + sr + 2sj + u\overline{r}\ell} C^{-s\overline{r} + sr\ell + u(\overline{r} - i + 1)})^{C^{1 + r + s - sr + 2s(i + b) + u(\overline{r} + i+ a + 1)}}\\
    &\quad\equiv A^{1 + s + 2s(i + \ell) + u\ell} B^{r + sr + 2sj + u\ell(\overline{r} - 1)} C^{-s\overline{r} + sr\ell + u(\overline{r} - i + 1)}\mod Z_1(J),
\end{align*}
where the central element is $A^{4u\ell(2i + j + b + 1) + ur\ell(\overline{r} + \ell) + u\varphi(s)\ell + us(\ell - \overline{r} + 1) + usri + 2us(j + a + b + 1)}$. Then
\begin{equation}\label{eq.L.1}
    \begin{split}
        &((AB^r)^{1+s} A^{2si} B^{2sj})^{[(AB^r)^{1+s} A^{2si} B^{2sj},B^{1+r} (AB^r)^s A^{2sa} B^{2sb}]}\\
        &= A^{1 + s + 2s(i + \ell) + u\ell} B^{r + sr + 2sj + u\ell(\overline{r} - 1)} C^{-s\overline{r} + sr\ell + u(\overline{r} - i + 1)}\\
        &\quad\; A^{4u\ell(2i + j + b + 1) + ur\ell(\overline{r} + \ell) + u\varphi(s)\ell + us(\ell - \overline{r} + 1) + usri + 2us(j + a + b + 1)}.
    \end{split}
\end{equation}

Applying \cite[Theorem 3.5]{MS3} to $(A^{1 + s + 2si} B^{r + sr + 2sj + u\overline{r}\ell} C^{-s\overline{r} - ui})^{1 + 2s\ell'} = A^{expA} B^{expB} C^{expC} A^{\xi}$ produces $expA\equiv 1 + s + 2s(i + \ell') + 2u\ell'(2i + 1) + u\varphi(2s\ell') - us\ell \ell'\mod 4us$, $expB\equiv r + sr + 2sj + 4uj\ell' + u(\ell' + \overline{r}\ell) + ur\ell \ell' + us\ell'(1 - \overline{r}\ell) + 2usj\ell'\mod 4us$, $expC\equiv - s\overline{r} - sr\ell' - ui + 2uj\ell'\mod 4u$, $\xi\equiv 0\mod 4us$, so
\begin{equation}\label{eq.R.1}
    \begin{split}
        &((AB^r)^{1+s} A^{2si} B^{2sj})^{1 + 2s\ell'}\\
        &= (A^{1 + s + 2si} B^{r + sr + 2sj + u\overline{r}\ell} C^{-s\overline{r} - ui})^{1 + 2s\ell'} A^{(ur\ell(\overline{r} - 1) + u\varphi(s)\ell + us\overline{r}(\ell - 1) + usri + 2usi)(1 + 2s\ell')}\\
        &= A^{1 + s + 2s(i + \ell')} B^{r + sr + 2sj + u(\overline{r}\ell + \ell')} C^{-s\overline{r} - sr\ell' - ui}\\
        &\quad\; A^{2u\ell'(2i - 2j + 1) + ur\ell(\overline{r} - \ell' - 1) + u\varphi(s)\ell + us(\overline{r}\ell + \overline{r}\ell \ell' - \overline{r} - \ell \ell' - \ell') + 2usi + u\varphi(2s\ell') + usri}.
    \end{split}
\end{equation}

Repeated applications of \cite[Theorem 3.1]{MS3} produce\\
$[C^{-1 - r - s + sr - 2s(i + b) - u(\overline{r} + i + a + 1)},A^{s + 2sa - u\ell}] = A^{2u\ell(2a + 1) + us\ell + 2usa}$,\\
$[C^{-1 - r - s + sr - 2s(i + b) - u(\overline{r} + i + a + 1)},B^{1 + r + 2sb + sr - u\overline{r}\ell}] = B^{-2s\ell} B^{-4u\ell(i + 2b + 1) - ur\ell + us(\ell + 1) + 2usa}$,\\
$[A^{s + 2sa - u\ell},B^{-sr\ell - u(\overline{r} + 1)}] = A^{usr}$,\\
$[C^{s(\overline{r} - 1) - 2sa - sr + u(\ell - a)},B^{-sr\ell - u(\overline{r} + 1)}] = 1$,\\
$[A^{-u},B^{1 + r + 2s(b + \ell) + sr - u\overline{r}\ell}] = C^{-u} A^{-us\ell + usr}$,\\
$[C^{s(\overline{r} - 1) - 2sa - sr + u(\ell - a)},A^{-u}] = 1$,\\
which all together yield
\begin{align*}
    &(B^{1+r} (AB^r)^s A^{2sa} B^{2sb})^{[B^{1+r} (AB^r)^s A^{2sa} B^{2sb},(AB^r)^{1+s} A^{2si} B^{2sj}]}\\
    &\equiv (A^{s + 2sa - u\ell} B^{1 + r + 2sb + sr - u\overline{r}\ell} C^{s(\overline{r} - 1) - 2sa - sr + u(\ell - a)})^{C^{-1 - r - s + sr - 2s(i + b) - u(\overline{r} + i + a + 1)} B^{-sr\ell - u(\overline{r} + 1)} A^{-u}}\\
    &\equiv (A^{s + 2sa - u\ell} B^{1 + r + 2s(b + \ell) + sr - u\overline{r}\ell} C^{s(\overline{r} - 1) - 2sa - sr + u(\ell - a)})^{B^{-sr\ell - u(\overline{r} + 1)} A^{-u}}\\
    &\equiv A^{s + 2sa - u\ell} B^{1 + r + 2s(b + \ell) + sr - u\overline{r}\ell} C^{s(\overline{r} - 1) - 2sa - sr + u(\ell - a + 1)}\mod Z_1(J),
\end{align*}
where the central element is $A^{-2u\ell(2i + 3a + 4b + 3) - ur\ell(\overline{r} + 1) - us\overline{r} + u\varphi(s)\ell + usr(\overline{r} + a) + 2us}$. Then
\begin{equation}\label{eq.L.2}
    \begin{split}
        &(B^{1+r} (AB^r)^s A^{2sa} B^{2sb})^{[B^{1+r} (AB^r)^s A^{2sa} B^{2sb},(AB^r)^{1+s} A^{2si} B^{2sj}]}\\
        &= A^{s + 2sa - u\ell} B^{1 + r + 2s(b + \ell) + sr - u\overline{r}\ell} C^{s(\overline{r} - 1) - 2sa - sr + u(\ell - a + 1)}\\
        &\quad\; A^{-2u\ell(2i + 3a + 4b + 3) - ur\ell(\overline{r} + 1) - us\overline{r} + u\varphi(s)\ell + usr(\overline{r} + a) + 2us}.
    \end{split}
\end{equation}

Finally, applying \cite[Theorem 3.5]{MS3} to
\[
(A^{s + 2sa - u\ell} B^{1 + r + 2sb + sr - u\overline{r}\ell} C^{s(\overline{r} - 1) - 2sa - sr + u(\ell - a)})^{1 + 2s\ell'}
= A^{expA} B^{expB} C^{expC} A^{\xi}
\]
produces
$expA\equiv s + 2sa - u\ell + 2u\ell'(2a + 1) - us\ell \ell' + 2us\ell'(a + 1)\mod 4us$,
$expB\equiv 1 + r + 2s(b + \ell') + sr + 4ub\ell' + u(\ell' - \overline{r}\ell) + us\ell'\mod 4us$,
$expC\equiv s(\overline{r} - 1) - 2sa - sr + u(\ell - a + \ell') + 2ua\ell'\mod 4u$,
$\xi\equiv 0\mod 4us$, so
\begin{equation}\label{eq.R.2}
    \begin{split}
        &(B^{1+r} (AB^r)^s A^{2sa} B^{2sb})^{1 + 2s\ell'}\\
        &= (A^{s + 2sa - u\ell} B^{1 + r + 2sb + sr - u\overline{r}\ell} C^{s(\overline{r} - 1) - 2sa - sr + u(\ell - a)})^{1 + 2s\ell'}\\
        &\qquad A^{(-2u\ell a - ur\overline{r}\ell + us(\ell - \overline{r} + 1) + u\varphi(s)\ell + usr(\overline{r} + a) + 2us)(1 + 2s\ell')}\\
        &= A^{s + 2sa - u\ell} B^{1 + r + 2s(b + \ell') + sr + u(\ell' - \overline{r}\ell)} C^{s(\overline{r} - 1) - 2sa - sr + u(\ell - a + \ell')}\\
        &\qquad A^{2u(2a\ell' - \ell a - 2b\ell' + \ell') - ur\overline{r}\ell + us(\ell - \overline{r} + 1 - \ell \ell' - \ell') + u\varphi(s)\ell + usr(\overline{r} + a) + 2us(\ell' + 1)}.
    \end{split}
\end{equation}

This completes the calculation of both sides of \eqref{eq.LR.1} and \eqref{eq.LR.2}.

\end{document}